\newtheorem{thm}{Theorem}
\newtheorem{lemma}{Lemma}
\newtheorem{prop}{Proposition}
\newtheorem{cor}{Corollary}
\newtheorem{definition}{Definition}
\newtheorem{myexample}{Example}
\newtheorem{remark}{Remark}
\newenvironment{proof}{\emph{Proof.}}{\(\blacksquare\)}
\newenvironment{customproof}[1]{\emph{Proof of #1.}}{\(\blacksquare\)}
\newcommand{\coloneqq}{\mathrel : =}
\newcommand{\weakcvg}{\rightharpoonup}
\newcommand{\Tr}{\operatorname{Tr}}
\newcommand{\LintRn}{{\bf L^n}}
\newcommand{\LintR}[1]{{\bf L^{#1}}}
\newcommand{\SintRnn}{{\bf S^m}}
\newcommand{\Momn}{{\bf M}}
\newcommand{\bachir}[1]{{#1}}
\title{\bf Time-Varying Semidefinite Programs}
\date{}
\author{
 Amir Ali Ahmadi and  Bachir El Khadir \thanks{The authors are with  the department of Operations Research and Financial Engineering at Princeton University. \newline
   This work was partially supported by the DARPA Young Faculty Award, the MURI award of the AFOSR, the CAREER Award of the NSF, the Google Faculty Award, the Innovation Award of the School of Engineering and Applied Sciences at Princeton University, and the Sloan Fellowship.}\\
 \{\texttt{a\_a\_a}, \texttt{bkhadir}\}\texttt{@princeton.edu}
}
\begin{document}

\maketitle

\begin{abstract}
  We study time-varying semidefinite programs (TV-SDPs), which are semidefinite programs whose data (and solutions) are functions of time. Our focus is on the setting where the data varies polynomially with time. We show that under a strict feasibility assumption, restricting the solutions to also be polynomial functions of time does not change the optimal value of the TV-SDP. Moreover, by using a Positivstellensatz on univariate polynomial matrices, we show that the best polynomial solution of a given degree to a TV-SDP can be found by solving a semidefinite program of tractable size. We also provide a sequence of dual problems which can be cast as SDPs and that give upper bounds on the optimal value of a TV-SDP (in maximization form). We prove that under a boundedness assumption, this sequence of upper bounds converges to the optimal value of the TV-SDP. Under the same assumption, we also show that the optimal value of the TV-SDP is attained. We demonstrate the efficacy of our algorithms on a maximum-flow problem with time-varying edge capacities, a wireless coverage problem with time-varying coverage requirements, and on bi-objective semidefinite optimization where the goal is to approximate the Pareto curve in one shot.
\end{abstract}

{\bf Index Terms.}
 Semidefinite programming,
 time-varying convex optimizaiton,  
 univariate polynomial matrices,
 Positivstellensatz\" e,
 continuous linear programs, 
 bi-objective optimization.

\section{Introduction}
\label{sec:orgheadline1}

We study semidefinite programs (SDPs) whose feasible set and objective function depend on time. More specifically, a \emph{time-varying semidefinite program} (TV-SDP) is an optimization problem of the form
\newcommand \talmostsurely {\forall t \in [0, 1]  \; \text{a.e.}}
\newcommand \opt {\text{opt}}
\begin{equation}
\label{eqn:time_varying_sdp_l2}
\begin{array}{ll@{}ll}
\underset{x \in \LintRn}{\sup} & \int_0^1 \langle c(t), x(t) \rangle dt & \\
\text{subject to}&  Fx(t)  \succeq 0 &\; \talmostsurely.\\
\end{array}
\end{equation}

Here, the operator $F: \LintRn \rightarrow \SintRnn$ is defined as 
\begin{equation}
\label{eqn:F}
Fx(t) \coloneqq A_0(t) + \sum_{i=1}^n x_i(t) A_i(t)  +  \sum_{i=1}^{n} \int_0^t x_{i}(s) D_i(t, s) ds,
\end{equation}

where 
$$\LintRn \coloneqq \{ x:[0, 1] \rightarrow \mathbb R^n \; | \; x \text{ measurable and } \sup_{t \in [0, 1], i = 1,\ldots,n} | x_i(t) | < \infty\},$$
and
$$\SintRnn \coloneqq \{ X:[0, 1] \rightarrow \mathbb R^{m\times m} \; | \; X(t) \text{ is symmetric }\; \forall t \in [0, 1] \text{ and }  \sup_{t \in [0, 1], i,j =1,\ldots, m} | X_{ij}(t) | < \infty\}.$$

The data to the problem \bachir{consist} of $c \in \LintRn$, $A_i \in \SintRnn$ for $i \in \{0, \ldots, n\}$, and $D_i$ for $i \in \{1, \ldots, n\}$, which satisfies the requirement that $D_i(t, .)$ be a measurable function in $\SintRnn$ for all $t \in [0, 1]$ and that  $\sup_{t,s \in [0, 1]} \|D_i(t, s)\| < \infty$, where $\|.\|$ is any matrix norm. For a symmetric matrix $M$, we write $M \succeq 0$ to denote that $M$ is positive semidefinite, i.e., has nonnegative eigenvalues. The abbreviation \emph{a.e.} indicates that the matrix inequality in (\ref{eqn:time_varying_sdp_l2}) should hold ``almost everywhere''; i.e., for every \(t \in [0, 1] \setminus N\), where \(N\) is some set of measure zero with respect to the Lebesgue measure.

For an interval $I \subseteq \mathbb R$, we define the set $$\SintRnn^+(I) \coloneqq \{ X \in \SintRnn \; | \; X(t) \succeq 0 \quad \forall t \in I \; \text{a.e.}\}.$$ With this notation, a \emph{feasible solution} to the TV-SDP in \eqref{eqn:time_varying_sdp_l2} is a function $x \in \LintRn$ that satisfies the constraint
\begin{equation}
  \label{eqn:constraint_tvsdp}
  Fx \in \SintRnn^+([0, 1]),
\end{equation}
and the \emph{feasible set} of the TV-SDP is the set $\mathcal F \coloneqq \{ x \in \LintRn \; | \; Fx \in \SintRnn^+([0, 1])\}$. The choice of the interval \([0, 1]\) is of course made for convenience. Without loss of generality, we can reduce any bounded interval \([a, b]\), with $a < b$, to the interval $[0, 1]$ by performing the change of variable \(t' = \frac{t-a}{b-a}\).

We equip $\LintRn$ and $\SintRnn$ respectively with the inner products $\langle \cdot, \cdot \rangle_{\LintRn}$ and  $\langle \cdot, \cdot \rangle_{\SintRnn}$ defined as
$$\langle x, y \rangle_\LintRn \coloneqq \int_0^1 \langle x(t), y(t) \rangle \; dt = \sum_{i=1}^n\int_0^1 x_i(t)y_i(t) \; dt,$$
and
$$\langle X, Y \rangle_\SintRnn \coloneqq \int_0^1 \langle X(t), Y(t) \rangle \; dt = \int_0^1 \Tr(X(t)Y(t)) \; dt,$$
where $\Tr(A)$ stands for the trace of a matrix $A$. Using the notation for the first inner product above, the TV-SDP in \eqref{eqn:time_varying_sdp_l2} can be written more compactly as

\begin{equation*}
\label{eqn:time_varying_sdp_compact}
\begin{array}{ll@{}ll}
\underset{x \in \LintRn}{\sup} & \langle c, x \rangle_{\LintRn} & \\
\text{subject to}&  Fx  \in \SintRnn^+([0, 1]).
\end{array}
\end{equation*}

The terms \(\int_0^t x_{i}(s) D_i(t, s) ds\) in \eqref{eqn:F} are called \emph{kernel terms} and broaden the class of problems that can be modelled as a TV-SDP. The special case where the terms \(D_i(t, s)\) are identically zero is already interesting and presents an infinite sequence of SDPs indexed by time $t \in [0, 1]$. While these SDPs are in principle independent of each other, basic strategies such as sampling $t$ and solving a finite number of independent SDPs generally fail to provide a solution to the TV-SDP. This is because candidate functions obtained from simple interpolation schemes can violate feasibility in between sample points. When the terms \(D_i(t, s)\) are not zero, the value that a solution takes at a given time affects the range of values that it can take at other times. When the terms \(D_i(t, s)\) are constant functions of \(t\) and \(s\) for instance, the TV-SDP in \eqref{eqn:time_varying_sdp_l2} is already powerful enough to express linear constraints involving the function $x$ and its derivatives and/or integrals of any order. For example, to impose a constraint on $x'(t)$, one can introduce a new decision variable $y \in \LintRn$ which is related to $x$ via the linear constraint $x(t) - \int_0^t y(s) \; {\rm d}s =0 $.

In this paper, we consider the data $c, A_0, \ldots, A_n, D_1, \ldots, D_n$ of the TV-SDP in \eqref{eqn:time_varying_sdp_l2} to belong to the class of \emph{polynomial} functions. Our interest in this setting stems from two reasons. On the one hand, the set of polynomial functions is dense in the set of continuous functions on \([0, 1]\) and hence powerful enough for modeling purposes. On the other hand, polynomials can be finitely parameterized (in the monomial basis for instance) and are very suitable for algorithmic operations. 

Even when the input data to a TV-SDP is polynomial, there is no reason to expect its optimal solution to be a polynomial or even a continuous function. Nevertheless, we concern ourselves in this paper with finding feasible polynomial solutions to a TV-SDP (which naturally provide lower bounds on its optimal value). Our motivation for making this choice is twofold. First, solutions that are smooth are often preferred in practice. Consider for example the problem of scheduling generation of electric power when daily user consumption varies with time, or that of finding a time-varying controller for a robotic arm that serves some routine task in a production line. In such scenarios, smoothness of the solution is important for avoiding deterioration of the hardware or guaranteeing safety of the workplace. Continuity of the solution is even more essential as physical implementation of a discontinuous solution is not viable. 

Our second motivation for studying polynomial solutions is algorithmic. We will show (cf. Section~\ref{sec:tvsdp_is_sdp}) that optimal polynomial solutions of a given degree to a TV-SDP with polynomial data can be found by solving a (non time-varying) SDP of tractable size.

These observations call for a better understanding of the power of polynomial solutions as their degree increases, or a methodology that can bound their gap to optimality when their degree is capped. These considerations are the subjects of Section~\ref{sec:poly_are_optimal} and Section~\ref{sec:dual-approach} respectively.

As an illustration of a TV-SDP with polynomially time-varying data and a preview of our solution technique, consider problem \eqref{eqn:time_varying_sdp_l2} with $n=2$ and data

$$A_0(t) = \begin{pmatrix}(1-\frac85t)^2&0&0&0\\0&0&0&0\\0&0&0&0\\0&0&0&0\end{pmatrix}, A_1(t) = \begin{pmatrix}-1&0&0&0\\0&0&1&0\\0&1&0&0\\0&0&0&0\end{pmatrix}, A_2(t) = \begin{pmatrix}0&0&0&0\\0&0&0&1\\0&0&0&0\\0&1&0&0\end{pmatrix}, D_1(t,s)=D_2(t,s)=0,$$
\[
c(t) = \begin{pmatrix}9t^2 - 9t + 1\\23t^3 - 34t^2 + 12t\end{pmatrix}.
\]

\begin{figure}[htb]
\centering
\includegraphics[width=0.75\textwidth]{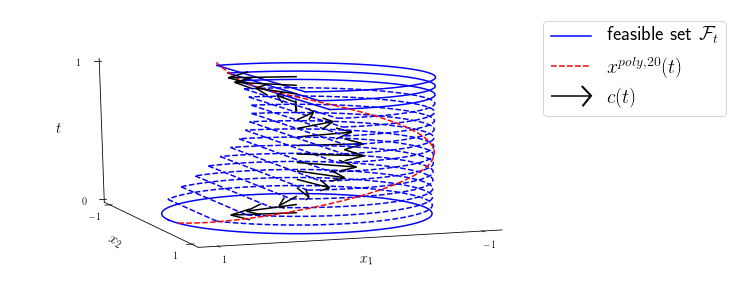}
\caption{\label{img:example_tv_sdp}
An example of a TV-SDP}
\end{figure}

As the kernel terms $D_i(t, s)$ are identically zero here, an optimal solution to this TV-SDP is a function  $x^{\opt} \in {\bf L^2}$ such that for all $t$ in $[0, 1]$ (except possibly on a set of measure zero), $x^{\opt}(t)$ is a maximizer of 
\(\langle c(t), x\rangle\) under the constraints \(A_0(t) + x_1 A_1(t) + x_2A_2(t) \succeq 0\). In Figure \ref{img:example_tv_sdp}, the dotted red line represents the optimal polynomial solution \(x^{\text{poly}, 20}(t)\) of degree \(20\). The feasible set   
$$\mathcal F_t \coloneqq \{x \in \mathbb R^2 \; |\; A_0(t) + x_1 A_1(t) + x_2A_2(t) \succeq 0 \}$$ 
for some sample times \(t\) is delimited by blue lines. The objective function \(c(t)\) is represented by a black arrow, which also moves in time.
The feasible solution \(x^{\text{poly}, 20}(t)\) achieves an objective value of $0.89$. By solving an inexpensive dual problem (with $d = 10$ in problem \eqref{eqn:ud.tvsdp} of  Section \ref{sec:dual-approach}), we can conclude that the optimal value of the TV-SDP cannot be greater than $0.93$. Moreover, 
we can get arbitrarily close to the exact optimal value of the TV-SDP by increasing the degree of the candidate polynomial solutions (cf. Section \ref{sec:poly_are_optimal}) or the level in the hierarchy of our dual problems (cf. Section \ref{sec:weak.and.strong.duality}).

\subsection{Related literature}
Time-varying SDPs contain as special case the time-varying versions of most common classes of convex optimization problems, including linear programs, convex quadratic programs, and second-order cone programs. In the linear programming case, this problem has been studied in the literature under the name of continuous linear programs (CLPs).
In its most general form, a CLP is a problem of the type

\begin{equation}
\label{eqn:clp}
\begin{array}{ll@{}ll}
\underset{x(t)}{\sup} & \int_0^1 \langle c(t), x(t) \rangle dt & \\
\text{subject to}& A(t) x(t) + \int_0^t D(t, s) x(s) ds \le b(t)  & \quad \forall t \in [0, 1]\;\text{a.e.},
\end{array}
\end{equation}
\noindent where \(A(t), D(t, s) \in \mathbb R^{m \times n}, c(t) \in \mathbb R^n\), and \(b(t) \in \mathbb R^m\), for all \(t,s \in [0, 1]\).

This problem was introduced by Bellman \cite{bellman_bottleneck_1953} and has since been studied by several authors who have provided algorithms, structural results, or a duality theory for CLPs; see e.g. \cite{
  lehman_continuous_1954,
  tyndall_duality_1965,
  anderson1987linear,
  levinson_class_1966,
  tyndall_extended_1967,
  grinold_continuous_1969,
  buie_numerical_1973,
  perold_fundamentals_1978,
  anstreicher1984generation,
  anderson_continuous-time_1989,
  luo_new_1998,
  shapiro_duality_2001,
  fleischer_efficient_2005,
  weiss_simplex_2008} and references therein. Several applications, e.g. in manufacturing, transportation, robust optimization, queueing theory, and revenue management, can also be found in these references.

Since CLPs are perceived as a hard problem class in general,
most authors make additional assumptions on how the problem
data varies with time, or, in the case of the so-called
``separated CLPs'' (\bachir{SCLPs}), how the kernel terms
and the non-kernel terms
interact~\bachir{\cite{pullan1996duality,
    pullan1993algorithm, pullan2000convergence,
    luo_new_1998, fleischer_efficient_2005,
    weiss_simplex_2008, buie_numerical_1973,
    perold_fundamentals_1978,
    anderson_continuous-time_1989}}.  \bachir{SLCPs enjoy
  many properties that general CLPs do not. For instance, under mild assumptions, SCLPs with piecewise-polynomial data admit piecewise-polynomial solutions \cite{pullan1995forms}---an attractive feature from an
  algorithmic point view. Unfortunately, the situation for TV-SDPs is not as nice, even without a kernel term. For example, consider problem~\eqref{eqn:time_varying_sdp_l2} with $n=2$ and
  data
        $$A_0(t) = \begin{pmatrix}1 & 0\\0 & 1 \end{pmatrix}, A_1(t) = \begin{pmatrix}1 & 0\\0 & -1 \end{pmatrix}, A_2(t) = \begin{pmatrix}0 & 1\\1 & 0 \end{pmatrix}, D_1(t, s) = D_2(t, s) = 0, \text{ and } c(t) = \begin{pmatrix}t\\1-t\end{pmatrix}.$$
        This TV-SDP has no kernel terms. Furthermore, all its data is constant except for the objective function which varies linearly with time. Its unique optimal solution, however, is easily seen to be $\frac{c(t)}{\|c(t)\|}$, i.e.,
        $$x(t) = \frac{1}{\sqrt{t^2+(1-t)^2}} \begin{pmatrix}t\\1-t\end{pmatrix},$$
        which is not a piecewise-polynomial function.
      }

The closest work in the CLP literature to our work is the paper \cite{bampou_polynomial_2012} by Bampou and Kuhn. The authors of this paper also assume that the data of the their CLP varies polynomially with time and employ semidefinite programming to approximate the optimal solution by polynomial (and piecewise polynomial) functions of time. Our approach here generalizes their nice algorithms and convergence guarantees to the SDP setting. In \cite{bampou_polynomial_2012}, the authors also make use of the rich duality theory of CLPs to get a sequence of upper bounds that converges to the optimal value of \eqref{eqn:clp} under certain conditions. The duality framework that we present in this paper is different in nature and is closer in spirit to the approach in~\cite{lasserre_joint+marginal_2009},~\cite{bampou2011scenario}. As it turns out, it suffices for us to assume boundedness of the primal feasible set to guarantee convergence of our dual bounds to the optimal value of the TV-SDP. 


 The only generalization of continuous linear programs that we are aware of appears in the work of Wang, Zhang, and Yao in \cite{wang_separated_2009}, which makes a number of important contributions to separated continuous conic programs. The assumptions in \cite{wang_separated_2009} are however stronger than the ones we make here. In particular, there are separation assumptions on the kernel and non-kernel terms in \cite{wang_separated_2009} and the data to the problem is assumed to vary only linearly with time. 
Another work related to this paper is the work by Lasserre in \cite{lasserre_joint+marginal_2009}, which studies a parametric polynomial optimization problem of the form
\begin{equation}
\label{eq:lasserre.problem}
\begin{array}{ll@{}ll} 
\underset{x(y) \in \mathbb R^n}{\sup} & \int_{y \in K} f(x(y), y) {\rm d}\phi(y) & \\
\text{subject to}& h_{j}(x(y), y) \ge 0 & \forall j\in\{ 1,\ldots,r\}, \forall y \in K \; \phi\text{-a.e.},\\
\end{array}
\end{equation}
where $\phi$ is a probability distribution on some compact basic semialgebraic set $K \subseteq \mathbb R^s$, and $h_{j}(x, y)$ are \bachir{polynomial} functions of $x$ and $y$. An inequality involving $y$ is valid $\phi\text{-a.e.}$ if it is valid for all $y$ in $K$ except on some set $K'$ with $\phi(K') = 0$.
When the kernel terms in (\ref{eqn:F}) are zero, problem \eqref{eqn:time_varying_sdp_l2} can in theory be put in the form of \eqref{eq:lasserre.problem} by setting $s=1$ and replacing the semidefinite constraint 
with nonnegativity of all $2^m-1$ polynomials that form the principal minors of $Fx(t)$. 
Our duality framework in Section \ref{sec:dual-approach} is inspired by the approach in \cite{lasserre_joint+marginal_2009}. However, as we are dealing with a much more structured problem, we are also able to find the best polynomial solution of a given degree to \eqref{eqn:time_varying_sdp_l2} with an SDP of tractable size, as well as prove asymptotic optimality of polynomial solutions even in presence of the kernel terms.

Finally, we remark that at a broader level, the idea of using semidefinite programming to find polynomial solutions (or ``policies'') to dynamic or uncertain optimization problems has been applied before to questions in multi-stage robust and stochastic optimization; see e.g.~\cite{bertsimas2011hierarchy} and~\cite{bampou2011scenario}.




\renewcommand\labelitemi{{\boldmath$\cdot$}}

\subsection{Organization and contributions of the paper} 
This paper is organized as follows.
In Section \ref{sec:optim-value-is-attained}, we prove that under a boundedness assumption, the optimal value of the TV-SDP in \eqref{eqn:time_varying_sdp_l2} is attained (Theorem \ref{thm:optim-value-attained}). This proof is obtained by combining two theorems that are used also in other sections of the paper. The first (Theorem \ref{thm:bounded.measure.compact}) shows that a sequence of linear functionals that satisfies a certain boundedness property on nonnegative polynomials has a weakly convergent subsequence. The second (Theorem \ref{thm:weak_lim_satisfies_ineq}) shows that when a weakly convergent sequence of functions in $\LintRn$ satisfies linear inequalities of the type in \eqref{eqn:constraint_tvsdp}, then so does its weak limit.

In Section \ref{sec:primal-approach}, we prove that under a strict feasibility assumption, polynomial solutions are arbitrarily close to being optimal to the TV-SDP in \eqref{eqn:time_varying_sdp_l2} (Theorem \ref{thm:poly.optimal}). We also show that this assumption cannot be removed in general (Example \ref{ex:tv.sdp.with.no.poly.sol}). Furthermore, we show how sum of squares techniques combined with certain matrix Positivstellensatz\" e enable the search for the best polynomial solution of a given degree to be cast as an SDP of polynomial size (Theorem \ref{thm:tvsdp_as_sdp}).

In Section \ref{sec:dual-approach}, we develop a hierarchy of dual problems (or relaxations) that give a sequence of improving upper bounds on the optimal value of the TV-SDP in \eqref{eqn:time_varying_sdp_l2}.
We show that under a boundedness assumption, these upper bounds converge to the optimal value of the TV-SDP (Theorem \ref{thm:strong.duality}). We also show that our dual problems can be cast as SDPs (Theorem \ref{thm:dual.is.sdp}). For a given TV-SDP, the dimensions of the matrices that feature in both our primal and dual SDP hierarchies grow only linearly with the order of the hierarchy. 

In Section \ref{sec:applications}, we present applications of time-varying semidefinite programs to a maximum-flow problem with time-varying edge capacities, a wireless coverage problem with time-varying coverage requirements, and to bi-objective semidefinite optimization where the goal is to approximate the Pareto curve in one shot. Finally, we end with some future research \bachir{directions} in Section~\ref{sec:conclusion}.


\newcommand \measureset {{\bf M}^n}

\subsection{Notation}
\label{sec:notation}
We denote
\begin{itemize}
\item the \((i,j)^{th}\) entry of a matrix \(A\) by $A_{ij}$,
\item the trace of a matrix $A$ by $\Tr(A)$,
\item the vector of all ones by $\vec 1$,
\item the identity matrix by $I$,
\item the diagonal matrix with the vector $x\in\mathbb{R}^n$ on its diagonal by $diag(x)$,
\item the standard inner product in $\mathbb R^n$ by $\langle \cdot, \cdot \rangle$; i.e., for two vectors $x, y \in \mathbb R^n$, $\langle x, y \rangle = \sum_{i=1}^nx_i y_i$,
\item the infinity-norm of a vector by $\|\cdot\|_\infty$; i.e., for a vector $x \in \mathbb R^n$, $\|x\|_\infty = \max_{i=1,\ldots,n} |x_i|$,
\item the set of \(n \times n\) (constant) symmetric matrices by \(\mathcal S^n\) and its subset of positive semidefinite matrices by \({\mathcal S^n}^+\),
\item the degree of a polynomial $p$ by $\deg(p)$ (when $p$ is a vector of polynomials, $deg(p)$ denotes the maximum degree of its entries),
\item the set of \(n \times m\) matrices  whose components are polynomials in the variable \(t\) with real coefficients by \(\mathbb R ^{n \times m}[t]\). For \(d \in \mathbb N\), \(\mathbb R ^{n \times m}_d[t]\) denotes the subset of $\mathbb R ^{n \times m}[t]$ consisting of matrices whose entries are polynomials of degree  at most \(d\). When $m = 1$, we simply use the notation $\mathbb R ^{n}[t]$ and $\mathbb R ^{n}_d[t]$, and when $n = 1$ as well, we simplify the notation to $\mathbb R[t]$ and $\mathbb R_d[t]$.
\item We denote the set of linear functionals on $\mathbb R^{n}[t]$ by $\measureset$. 
\item For $\mu \in \measureset$, we denote by $\mu_i: \mathbb R[t] \rightarrow \mathbb R$ the unique linear functional that satisfies $$\mu(g) = \sum_{i=1}^n \mu_i(g_i)  \quad \forall g \in \mathbb R^{n}[t].$$
\item For a function $f \in \LintRn$, we denote by $l_f$ the element of $\measureset$ defined by
$$l_f(g) \coloneqq \langle f, g \rangle_{\LintRn} = \int_0^1 \langle f(t), g(t) \rangle {\rm d}t  \quad \forall g \in \mathbb R^{n}[t].$$
\end{itemize}

\section{The Optimal Value of a Bounded TV-SDP is Attained}
\label{sec:optim-value-is-attained}
\label{sec:analysis.background}
In this section, we study the following question: If the optimal value $opt$ of (\ref{eqn:time_varying_sdp_l2}) is finite (i.e., the problem is feasible and bounded above), does there exist a function $x^*\in \LintRn$ such that $\langle c, x^* \rangle_\LintRn~=~opt$? Many of the arguments given here will be used again in Section \ref{sec:dual-approach} on duality theory.

The question of attainment of the optimal value (i.e., existence of solutions) is a very basic one and has been studied in the continuous linear programming literature already; see e.g. \cite{grinold_continuous_1969}. 
In the TV-SDP case, note that even for standard SDPs that do not depend on time, the optimal value is not always attained unless the feasible set is bounded. We prove in this section that under the following boundedness assumption
 \begin{equation}
 ``\text{$\exists \gamma > 0$ such that for all feasible solutions $x$ to \eqref{eqn:time_varying_sdp_l2},  $\|x(t)\|_{\infty} \le \gamma$ for all $t \in [0, 1]$ a.e.''}, 
   \label{eqn:assump.boundedness}
 \end{equation}
the optimal value of the TV-SDP in \eqref{eqn:time_varying_sdp_l2} is always attained. 
This is not an immediate fact as the search space $\LintRn$ is infinite dimensional. 
  The idea is to prove that a sequence of feasible solutions to a TV-SDP  whose objective value approaches the optimal value must have a converging subsequence and that the limit of the subsequence must also be feasible. It turns out that the right notion of convergence in this context is \emph{weak convergence}. We begin by stating the definition, and then prove that the weak limit of a sequence of feasible solutions is again feasible.

\begin{definition}
[Weak Convergence]

A sequence of linear functionals  $\{\mu^i\}$ in $\measureset$ converges weakly to a linear functional \(\mu^{\infty} \in \measureset\) (we write \(\mu^i \weakcvg \mu^\infty\)) if for all $p \in \mathbb R^n[t],$
$$\mu^i(p)  \rightarrow \mu^\infty(p) \text{ as $i \rightarrow \infty$}.$$

Similarly, a sequence of functions \(\{f^i\}\) in $\LintRn$ converges weakly to a function \(f^{\infty} \in \LintRn\)  (we write \(f^i \weakcvg f^{\infty}\)) if \(l_{f^i} \weakcvg l_{f^{\infty}}\) as $i \rightarrow \infty$.
\label{def:weak_convergence}
\end{definition}

The next theorem shows a compactness result for the set $\measureset$. 
\begin{thm}
  \label{thm:bounded.measure.compact}
  Let $\{\mu^d\}$ be a sequence of linear functionals in $\measureset$. If the following implication holds for every $d \in \mathbb N$ and every polynomial $q \in \mathbb R_d^n[t]$:
  $$q_i(t) \ge 0 \quad \forall t \in [0, 1],\; \forall i\in\{1\ldots,n\} \implies |\mu^d(q)| \le \sum_{i=1}^n \int_0^1 q_i(t) {\rm d}t,$$
then there exists a function $f \in \LintRn$ and a subsequence of $\{\mu^d\}$ that converges weakly to $l_f$.
\end{thm}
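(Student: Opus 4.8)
The plan is to extract from $\{\mu^d\}$ a subsequence whose moment sequences converge, identify the limit with a linear functional $\mu^\infty$ on $\mathbb R^n[t]$, and then show that $\mu^\infty = l_f$ for a suitable bounded measurable $f$. Weak convergence here amounts to convergence of the moment sequences, so the first two steps are a standard compactness (diagonalization) argument; the genuine work is the representation step.

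First I would establish uniform moment bounds. Fix $i \in \{1,\dots,n\}$ and $k \in \mathbb N$, and apply the hypothesis to the test vector $q \in \mathbb R^n_d[t]$ (for any $d \ge k$) whose $i$-th entry is $t^k$ and whose other entries are $0$. Since $t^k \ge 0$ on $[0,1]$, the hypothesis yields $|\mu^d_i(t^k)| = |\mu^d(q)| \le \int_0^1 t^k\,dt = \tfrac1{k+1} \le 1$. Thus for each pair $(i,k)$ the real sequence $\{\mu^d_i(t^k)\}_{d \ge k}$ is bounded. As the set of such pairs is countable, a Cantor diagonal argument produces a single subsequence $\{\mu^{d_\ell}\}$ along which $\mu^{d_\ell}_i(t^k)$ converges for every $i$ and $k$. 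Defining $\mu^\infty_i(t^k)$ to be these limits and extending by linearity gives $\mu^\infty \in \measureset$, and since every polynomial is a finite combination of monomials, $\mu^{d_\ell} \weakcvg \mu^\infty$. Passing to the limit in the hypothesis (for any fixed nonnegative $q$ of degree $D$ the bound holds for all $d_\ell \ge D$) shows that $\mu^\infty$ inherits the domination property; restricting to a single coordinate, $|\mu^\infty_i(p)| \le \int_0^1 p(t)\,dt$ for every $p \in \mathbb R[t]$ with $p \ge 0$ on $[0,1]$.

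The hard part is to realize $\mu^\infty$ as integration against a bounded function. I would work one coordinate at a time and use the domination bound to split $\mu^\infty_i$ into positive pieces: the functionals $\nu_i^{\pm}(p) \coloneqq \tfrac12\bigl(\int_0^1 p \pm \mu^\infty_i(p)\bigr)$ are both nonnegative on polynomials that are nonnegative on $[0,1]$ (this is exactly the content of the domination bound), they satisfy $\nu_i^+ - \nu_i^- = \mu^\infty_i$, and $\nu_i^+ + \nu_i^-$ is the Lebesgue functional $p \mapsto \int_0^1 p\,dt$. By Haviland's theorem (equivalently, solvability of the Hausdorff moment problem on the compact set $[0,1]$), each $\nu_i^\pm$ is represented by a finite positive Borel measure $\rho_i^\pm$ on $[0,1]$. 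Since the Hausdorff moment problem is determinate, the identity of moments $\nu_i^+ + \nu_i^- = \int_0^1(\cdot)\,dt$ forces $\rho_i^+ + \rho_i^- = \lambda$, Lebesgue measure on $[0,1]$.

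Finally I would invoke Radon--Nikodym. Because $\rho_i^+ + \rho_i^- = \lambda$ and both summands are nonnegative, every $\lambda$-null set is null for each $\rho_i^\pm$, so $\rho_i^\pm \ll \lambda$ with densities $g_i^\pm$ satisfying $g_i^+ + g_i^- = 1$ and hence $0 \le g_i^\pm \le 1$ a.e. Setting $f_i \coloneqq g_i^+ - g_i^-$ gives $|f_i| \le 1$ a.e., and for every polynomial $p$, $\mu^\infty_i(p) = \int_0^1 p\,d\rho_i^+ - \int_0^1 p\,d\rho_i^- = \int_0^1 p(t) f_i(t)\,dt$. Collecting the coordinates into $f = (f_1,\dots,f_n)$, which lies in $\LintRn$ since each $|f_i| \le 1$, we obtain $\mu^\infty = l_f$, so $\{\mu^{d_\ell}\}$ converges weakly to $l_f$, as desired. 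The main obstacle, and the reason the domination hypothesis is phrased as it is, is precisely this representation step: it is what upgrades the abstract limit functional (a priori only a measure on $[0,1]$) to one given by a uniformly bounded density.
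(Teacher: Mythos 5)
Your proof is correct. The first half --- uniform bounds on the moments $\mu^d_i(t^k)$ obtained by testing against coordinate monomial vectors, followed by a Cantor diagonalization to extract a subsequence converging on every monomial and hence (by linearity) weakly to a limit functional $\mu^\infty$ that inherits the domination property --- is essentially identical to the paper's argument. Where you diverge is in the representation step, which both you and the paper correctly identify as the real content. The paper forms, for each coordinate $j$, the functional $\lambda_j(w) = \int_0^1 w \, {\rm d}t - \mu^\infty_j(w)$, checks that $0 \le \lambda_j(h) \le 2\int_0^1 h$ on polynomials nonnegative on $[0,1]$, and then invokes a packaged result (Lemma~\ref{lem:lasserre_thm}, a specialization of Theorem~3.12a in Lasserre's book) asserting that such a dominated nonnegative functional is integration against a bounded density; the bounded representing function is then $1 - \hat f_j$. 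You instead perform a Jordan-type decomposition $\mu^\infty_i = \nu_i^+ - \nu_i^-$ with $\nu_i^\pm = \tfrac12(\int_0^1 \cdot \, {\rm d}t \pm \mu^\infty_i)$, represent each piece by a positive measure via Haviland's theorem, use determinacy of the Hausdorff moment problem on $[0,1]$ to conclude $\rho_i^+ + \rho_i^- = \lambda$, and finish with Radon--Nikodym to get densities bounded by $1$. Both routes are sound and rest on the same underlying measure theory; yours is more self-contained (it unpacks the cited lemma into classical ingredients and additionally yields the explicit bound $|f_i| \le 1$ a.e.), while the paper's is shorter because it outsources exactly this step to a single citation. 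The symmetric decomposition into $\nu_i^\pm$ is a slightly cleaner way to handle the two-sided bound $|\mu^\infty_i(h)| \le \int_0^1 h$ than the paper's one-sided $\lambda_j$ with constant $\kappa = 2$.
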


In the proof of this theorem, we will invoke the following lemma, which is obtained by a direct application of a result of Lasserre \cite[Theorem 3.12a]{lasserre_moments_2010}.\footnote{To get the statement of the lemma, apply~\cite[Theorem 3.12a]{lasserre_moments_2010} with $n=1, \mathbb K = [0, 1], m=2, g_1 = t, g_2 = 1-t, L_y = \lambda, L_z = l_v,$ where $v\in \LintR1$ is the constant function equal to one, and observe that for any $p\in\mathbb{R}[t],$ the polynomials $p^2g_1, p^2g_2, p^2g_1g_2$ are nonnegative on the interval $[0, 1]$.}
\begin{lemma}[See Theorem 3.12a in \cite{lasserre_moments_2010}]
  \label{lem:lasserre_thm}
  For a linear \bachir{functional} $\lambda \in \Momn^1$, if there exists a scalar $\kappa$ such that the inequalities 
  $$0 \le \lambda(h) \le \kappa \int_0^1 h(t) \;{\rm d}t$$
  hold for every polynomial $h \in \mathbb R[t]$ that is nonnegative on $[0, 1]$, then there exists a function $f \in \LintR{1}$ such that $\lambda(g)=l_f(g), \forall g\in\mathbb R[t].$  
\end{lemma}

\begin{customproof}{Theorem \ref{thm:bounded.measure.compact}}
  The ideas of the proof are inspired by those in \cite[Chap. 7]{walkden_lecture_nodate}.
  Let $\{b_0, b_1, \ldots\}$ be a basis of $\mathbb R^n[t]$ where all entries of the polynomials $b_j$ are of the form $t^k$ for some nonnegative integer $k$. Let $d_j$ denote the maximum degree of the entries of $b_j$.    
  It is clear by assumption that $|\mu^i(b_j)| \le n$ for every $i, j \in \mathbb N$ such that $i \ge d_j$.
Consider the sequence of real numbers  $\{\mu^i(b_0)\}_{i \ge d_0}$. This sequence is bounded in absolute value by $n$. As such, it has a convergent subsequence $\{\mu^{i, (0)}(b_0)\}$.
Next consider $\{\mu^{i, (0)}(b_1)\}_{i \ge d_1}$. Again, this is a sequence of real numbers that is bounded in absolute value by $n$ and so it has a convergent subsequence $\{\mu^{i, (1)}(b_1)\}$.
Iterating this procedure, we obtain, for each integer $r \ge 0$, a subsequence of linear functionals $\{\mu^{i, (r)}\}$ with the property that $\{\mu^{i, (r+1)}\} \subseteq \{\mu^{i, (r)}\}$. Moreover, for all $j, r \in \mathbb N$ with $r\geq d_j$, the sequence of numbers $\{\mu^{i, (r)}(b_j)\}$ converges as $i \rightarrow \infty$. Now consider the diagonal sequence of linear functionals $\{\mu^{i, (i)}\}$. For every $j$, $\{\mu^{i,{(i)}}(b_j)\}$ converges as $i \rightarrow \infty$ as the sequence of linear functionals $\{\mu^{i, (i)}\}_{i \ge d_j}$ is a subsequence of $\{\mu^{i, (d_j)}\}$.
Since the functions $\{b_i\}$ span $\mathbb R^n[t]$ and the elements of the sequence $\{\mu^{i,{(i)}}\}$ are linear functionals, the sequence $\{\mu^{i, (i)}(g)\}$ converges for all polynomial functions $g \in \mathbb R^n[t]$.
Let $\mu^{\infty}$ be the linear functional defined by
\begin{equation}
  \label{eq:mu_infinity}
  \mu^{\infty}(g) = \lim_{i \rightarrow \infty} \mu^{i, (i)}(g) \quad \forall g \in \mathbb R^n[t].
\end{equation}
We have just proven that the sequence $\{\mu^{i,(i)}\}$ converges weakly to $\mu^\infty$. The claim of the theorem would be established if we show that there exists a function $f\in \LintRn$ such that $\mu^\infty(g)=l_f(g), \forall g\in\mathbb R^n[t].$ 
In order to get this statement from Lemma \ref{lem:lasserre_thm}, for $j \in \{1, \ldots, n\}$, let  $\lambda_j \in \Momn^1$ be defined as
$$\lambda_j(w) \coloneqq \int_0^1 w(t) \; {\rm d}t - \mu^\infty_j(w) \quad \forall w \in \mathbb R[t].$$
Let $h \in \mathbb R[t]$ be a polynomial that is nonnegative on $[0, 1]$. Take $h^{(j)} \in \mathbb R^n[t]$ to be the vector-valued polynomial whose entries are all identically zero except for the $j^{\text{th}}$ one that is equal to $h$.
From \eqref{eq:mu_infinity} we see that $$\mu^{\infty}_j(h) = \mu^\infty(h^{(j)}) = \lim_{i \rightarrow \infty} \mu^{i, (i)}(h^{(j)}) = \lim_{i \rightarrow \infty} \mu^{i, (i)}_j(h).$$
Since for $i$ larger than the degree of $h$,
$$|\mu^{i, (i)}_j(h)| = |\mu^{i, (i)}(h^{(j)})| \le  \sum_{k=1}^n \int_0^1 h^{(j)}_k(t) \;{\rm d}t = \int_0^1 h(t) \; {\rm d}t,$$
we have that that $|\mu^\infty_j(h)| \le \int_0^1 h(t) \; {\rm d}t$, and therefore
  $$|\lambda_j(h)| \le \left|\int_0^1 h(t) \; {\rm d}t\right| + |\mu^\infty_j(h)|  \le 2 \int_0^1 h(t) \; {\rm d}t.$$
 Similarly, it is straightforward to argue that $\lambda_j (h)\geq 0$. Hence, by Lemma \ref{lem:lasserre_thm}, for each $j \in \{1, \ldots, n\}$, there exists a function $\hat f_j \in \LintR{1}$ such that $\lambda_j(w)=l_{\hat f_j}(w), \forall w\in\mathbb R[t].$ Therefore, $$\mu^{\infty}_j(w)=\int_0^1 \big(1-\hat f_j(t)\big)w(t) \; {\rm d}t, \forall w\in\mathbb R[t].$$ The function $f\in \LintRn$ that we were after can hence be taken to be $f\mathrel{\mathop:}=(1-\hat f_1,\ldots, 1-\hat f_n)^T.$
\end{customproof}

The next theorem shows that when all functions in a sequence satisfy linear inequalities of the type in \eqref{eqn:constraint_tvsdp}, their weak limit does the same.
\begin{thm}
Let the operator $F$ be as in (\ref{eqn:F}). If a sequence of functions \(\{f_k\}\) in $\LintRn$ converges weakly to a function \(f_{\infty} \in \LintRn\) and satisfies $Ff_k \in \SintRnn^+([0, 1])$ for all \(k \in \mathbb N\), then $Ff_{\infty} \in \SintRnn^+([0, 1]).$
\label{thm:weak_lim_satisfies_ineq}
\end{thm}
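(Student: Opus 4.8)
The plan is to reduce the matrix-valued conclusion to a family of scalar inequalities tested against polynomials, so that weak convergence---which by Definition~\ref{def:weak_convergence} is only guaranteed against \emph{polynomial} test functions---can be invoked. Recall that a symmetric matrix-valued function $M \in \SintRnn$ satisfies $M \in \SintRnn^+([0,1])$, i.e. $M(t) \succeq 0$ for almost every $t$, if and only if for every vector $v \in \mathbb{R}^m$ the scalar function $t \mapsto v^\top M(t) v$ is nonnegative almost everywhere. I would therefore first argue that it suffices to prove the following: for every $v \in \mathbb{R}^m$ and every polynomial $\phi \in \mathbb{R}[t]$ that is nonnegative on $[0,1]$,
$$\int_0^1 \phi(t)\, v^\top Ff_\infty(t)\, v \; {\rm d}t \ge 0.$$
Indeed, fixing $v$ and writing $g(t) := v^\top Ff_\infty(t)\, v$ (a bounded measurable function, since $f_\infty \in \LintRn$ and the data is bounded), the inequality above for all nonnegative polynomials $\phi$ upgrades, via Weierstrass approximation (approximate a nonnegative continuous function uniformly by a strictly positive, hence nonnegative, polynomial), to $\int_0^1 \psi\, g \,{\rm d}t \ge 0$ for every nonnegative continuous $\psi$. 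A standard measure-theoretic argument (approximating the indicator of $\{g < -1/k\}$ in $L^1$ by continuous functions, using boundedness of $g$) then forces $g \ge 0$ a.e. Taking a countable dense set of vectors $v$ and using continuity of $v \mapsto v^\top Ff_\infty(t)\, v$ yields $Ff_\infty(t) \succeq 0$ a.e.

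The heart of the proof is the computation showing that $\int_0^1 \phi(t)\, v^\top Fx(t)\, v\,{\rm d}t$ depends on $x$ only through a pairing against a fixed \emph{polynomial} vector. Expanding $v^\top Fx(t)\, v$ using \eqref{eqn:F} and integrating, the $A_0$ term contributes a constant independent of $x$, the middle terms $\sum_i \int_0^1 \phi(t)\, x_i(t)\, v^\top A_i(t)\, v\,{\rm d}t$ equal $\langle x, a\rangle_{\LintRn}$ with $a_i(t) := \phi(t)\, v^\top A_i(t)\, v$ a polynomial, and for the kernel terms I would apply Fubini (legitimate since all integrands are bounded on the bounded triangular domain) to swap the order of integration:
$$\int_0^1 \phi(t) \int_0^t x_i(s)\, v^\top D_i(t,s)\, v \; {\rm d}s\,{\rm d}t = \int_0^1 x_i(s) \Big( \int_s^1 \phi(t)\, v^\top D_i(t,s)\, v \; {\rm d}t \Big) {\rm d}s.$$
Crucially, since $\phi$ and $D_i$ are polynomial, the inner integral $b_i(s) := \int_s^1 \phi(t)\, v^\top D_i(t,s)\, v \,{\rm d}t$ is itself a polynomial in $s$ (integrating a polynomial in $(t,s)$ over $t \in [s,1]$ produces a polynomial in $s$). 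Hence $\int_0^1 \phi(t)\, v^\top Fx(t)\, v\,{\rm d}t = \mathrm{const} + \langle x,\, a + b\rangle_{\LintRn}$ with $a+b$ a polynomial vector. Applying $f_k \weakcvg f_\infty$ to the polynomial test function $a+b$ gives $\int_0^1 \phi\, v^\top Ff_k\, v\,{\rm d}t \to \int_0^1 \phi\, v^\top Ff_\infty\, v\,{\rm d}t$. Since $Ff_k \in \SintRnn^+([0,1])$ makes the left-hand side nonnegative for every $k$, the limit is nonnegative, which is exactly the inequality needed.

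The main obstacle is precisely the kernel (Volterra) term: a priori, weak convergence says nothing about $\int_0^t x_i(s)\, D_i(t,s)\,{\rm d}s$ for fixed $t$, and one cannot test $f_k \weakcvg f_\infty$ against the non-polynomial, $t$-dependent object $D_i(t,\cdot)$. The Fubini swap is what resolves this, collapsing the double integral into a single pairing of $x$ against the polynomial $b_i$; verifying that $b_i$ is genuinely polynomial (relying on the polynomial data assumption) is the linchpin. The remaining care is routine: checking that $Ff_\infty \in \SintRnn$ (symmetry is inherited from the symmetric data $A_i, D_i$, and boundedness from boundedness of $f_\infty$ and the data), and making the final ``nonnegative against all nonnegative polynomials $\Rightarrow$ nonnegative a.e.'' step rigorous.
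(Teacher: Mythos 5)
Your proof is correct, and its overall architecture matches the paper's: pair $Ff_k$ against polynomial positive semidefinite test objects, use Fubini on the triangular region to convert the Volterra term into a pairing of $x$ against a fixed \emph{polynomial} vector (this is exactly the role of the adjoint $F^*$ in the paper, equation \eqref{eqn:Fstar}), pass to the limit by weak convergence, and then argue that nonnegativity against all such tests certifies $Ff_\infty \in \SintRnn^+([0,1])$. Where you genuinely diverge is in that last certification step. The paper proves a self-duality statement (Lemma \ref{lem:s.plus.self.dual}): $Q \in \SintRnn^+([0,1])$ iff $\langle Q, P\rangle_{\SintRnn} \ge 0$ for \emph{all} polynomial $P \in \SintRnn^+([0,1])$, and the hard direction requires approximating the measurable rank-one field $1_{\lambda(t)<0}\,u(t)u(t)^T$ built from the minimal eigenvalue/eigenvector of $Q(t)$, first by continuous and then by polynomial matrices via Stone--Weierstrass. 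You instead test only against the much smaller family $\phi(t)\,vv^T$ with $\phi$ a nonnegative polynomial and $v$ a \emph{constant} vector, reduce to the scalar statement ``$\int_0^1 \phi\, g \ge 0$ for all nonnegative polynomials $\phi$ implies $g \ge 0$ a.e.'', and recover matrix positive semidefiniteness via a countable dense set of directions plus continuity of $v \mapsto v^T M(t) v$. This buys you a more elementary argument that sidesteps measurable selection of eigenvectors entirely; what it costs is that you do not obtain the full self-duality of $\SintRnn^+([0,1])$, which the paper reuses later (in the derivation of the dual hierarchy in Section \ref{sec:dual-approach} and in the proof of strong duality). Both routes rely, as you correctly flag, on the data $A_i, D_i$ being polynomial so that the resulting test function is polynomial and Definition \ref{def:weak_convergence} applies.
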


To prove this theorem, we need the following lemma, which also implies that the set $\SintRnn^+([0, 1])$ is self-dual. \bachir{This is a generalization of the corresponding statement for the non time-varying case, which states that the cone \({\mathcal S^n}^+\) is self-dual.}

\begin{lemma}
  \label{lem:s.plus.self.dual}
For any function \(Q \in \SintRnn\),  $Q \in \SintRnn^+([0, 1])$ if and only if $$\langle Q, P \rangle_{\SintRnn} \ge 0 \text{ for all } P \in \SintRnn^+([0, 1]) \cap \mathbb R^{m \times m}[t].$$
\end{lemma}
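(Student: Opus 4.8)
The plan is to prove the two implications separately, the reverse direction being the substantive one. For the forward (easy) direction, suppose $Q \in \SintRnn^+([0,1])$ and let $P \in \SintRnn^+([0,1]) \cap \mathbb{R}^{m \times m}[t]$. For almost every $t$, both $Q(t)$ and $P(t)$ are positive semidefinite symmetric matrices, so the finite-dimensional self-duality of ${\mathcal S^m}^+$ applies: writing a spectral decomposition $Q(t) = \sum_k \lambda_k u_k u_k^T$ with $\lambda_k \ge 0$ gives $\Tr(Q(t)P(t)) = \sum_k \lambda_k\, u_k^T P(t) u_k \ge 0$. Integrating over $[0,1]$ yields $\langle Q, P\rangle_{\SintRnn} \ge 0$.

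For the reverse direction, the key idea is to test $Q$ against rank-one polynomial matrices of the form $P(t) = w(t)\, vv^T$, where $v \in \mathbb{R}^m$ is fixed and $w \in \mathbb{R}[t]$ is a scalar polynomial nonnegative on $[0,1]$. Each such $P$ lies in $\SintRnn^+([0,1]) \cap \mathbb{R}^{m \times m}[t]$, since $w(t) \ge 0$ and $vv^T \succeq 0$. The hypothesis then gives
$$0 \le \langle Q, P\rangle_{\SintRnn} = \int_0^1 w(t)\, v^T Q(t) v \; {\rm d}t.$$
Fixing $v$ and setting $g_v(t) \coloneqq v^T Q(t) v$, a bounded measurable scalar function, this says that $\int_0^1 w\, g_v \ge 0$ for every polynomial $w$ that is nonnegative on $[0,1]$.

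The crux is to deduce from this that $g_v \ge 0$ almost everywhere. Every nonnegative continuous function $\phi$ on $[0,1]$ can be uniformly approximated by polynomials that are nonnegative on $[0,1]$ (approximate $\phi$ by a polynomial via the Weierstrass theorem and add a small positive constant), so the inequality extends by dominated convergence to $\int_0^1 \phi\, g_v \ge 0$ for all nonnegative $\phi \in C([0,1])$. This exactly says that the signed measure $g_v(t)\,{\rm d}t$ is a nonnegative measure, forcing $g_v \ge 0$ a.e.; thus for each fixed $v$ there is a null set $N_v$ outside of which $v^T Q(t) v \ge 0$. To upgrade these scalar statements into $Q(t) \succeq 0$ a.e., I take a countable dense subset $\{v_k\}$ of $\mathbb{R}^m$ (say, vectors with rational entries), set $N \coloneqq \bigcup_k N_{v_k}$, which is still null, and observe that for $t \notin N$ we have $v_k^T Q(t) v_k \ge 0$ for all $k$; by continuity of $v \mapsto v^T Q(t) v$ this gives $v^T Q(t) v \ge 0$ for every $v \in \mathbb{R}^m$, i.e. $Q(t) \succeq 0$. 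Hence $Q \in \SintRnn^+([0,1])$.

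I expect the main obstacle to be precisely the density step of the previous paragraph: justifying the passage from nonnegativity of $\int_0^1 w\, g_v$ over nonnegative polynomials $w$ to nonnegativity of $g_v$ a.e. This hinges on approximating nonnegative continuous functions by nonnegative polynomials on $[0,1]$ and on the Riesz-type identification of a nonnegative functional on $C([0,1])$ with a nonnegative measure. The remaining ingredients (the rank-one test matrices and the countable-dense-direction argument) are routine once this step is in place.
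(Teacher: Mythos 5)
Your proof is correct, but it takes a genuinely different route from the paper's. The paper attacks the reverse direction by constructing a single extremal test matrix $P^\infty(t)=1_{\lambda(t)<0}\,u(t)u(t)^T$, where $\lambda(t)$ is the smallest eigenvalue of $Q(t)$ and $u(t)$ a unit eigenvector, and then approximating $P^\infty$ in the $\SintRnn$-norm by polynomial positive semidefinite matrices $P^k(t)=\phi_k(t)u_k(t)u_k(t)^T$ (continuous approximation first, then Stone--Weierstrass); nonnegativity of $\langle Q,P^k\rangle_{\SintRnn}$ passes to the limit and yields $\int_0^1 1_{\lambda(t)<0}\lambda(t)\,{\rm d}t\ge 0$. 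You instead use the fixed-direction rank-one tests $P(t)=w(t)\,vv^T$, reduce to the scalar statement that $\int_0^1 w\,g_v\ge 0$ for all polynomials $w\ge 0$ on $[0,1]$ forces $g_v\ge 0$ a.e., and then recover $Q(t)\succeq 0$ a.e.\ via a countable dense set of directions. Both arguments are sound, and both ultimately rest on the same two ingredients (Weierstrass approximation and an $L^1$/measure-theoretic limiting step). Your version has the advantage of being entirely scalar-valued and of sidestepping the measurable selection of the eigenvector $t\mapsto u(t)$, a point the paper's proof uses without justification; the paper's version has the advantage that its continuous-then-polynomial approximation template is reused verbatim in the proof of Theorem \ref{thm:poly.optimal}, so the machinery is not wasted. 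The one step you flag as the potential obstacle --- passing from nonnegativity against nonnegative polynomials to nonnegativity against nonnegative continuous functions and then to $g_v\ge 0$ a.e.\ --- is indeed the crux, but your outline (uniform approximation plus a small additive constant, dominated convergence, and either the Riesz identification or a direct Urysohn/regularity argument on the set where $g_v<-\varepsilon$) closes it completely.
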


\begin{proof}
  The only if part is straightforward. For the other direction, fix $Q \in \SintRnn$ and assume that \(\langle Q, P \rangle_{\SintRnn} \ge 0\) for all $P \in \SintRnn^+([0, 1]) \cap \mathbb R^{m \times m}[t]$. For $t\in[0,1]$, let $\lambda(t)$ be the smallest eigenvalue of $Q(t)$ and $u(t)$ be an associated eigenvector of norm one. Denote by $1_{\lambda(t) < 0}$ the univariate function over $t\in[0,1]$ that is equal to 1 when $\lambda(t)<0$ and zero otherwise. Let $P^\infty(t)\mathrel{\mathop:}=1_{\lambda(t) < 0} u(t) u(t)^T.$ We claim that $\langle Q, P^\infty \rangle_{\SintRnn} \ge 0$. This would imply that $$\int_0^1 1_{\lambda(t) < 0}\lambda(t) dt=  \langle Q, P^\infty \rangle_{\SintRnn} \geq 0,$$ which proves that $\lambda(t)$ is nonegative almost everywhere on $[0, 1]$; i.e., the desired result.
  


To prove the claim, observe that since continuous functions are dense in the space of bounded and measurable functions on $[0, 1]$ (see e.g. \cite[Theorem 2.19]{adams_sobolev_2003}), for every positive integer $k$, there exist continuous functions $\phi_k: [0, 1] \rightarrow \mathbb R$ and $u_k: [0, 1] \rightarrow \mathbb R^n$ such that
  $$\int_0^1 (\phi_k(t) - 1_{\lambda(t) < 0})^2 {\rm d}t \leq \frac{1}{k} \  \mbox{and} \int_0^1 \|u_k(t) - u(t)\|_\infty^2 {\rm d}t \leq \frac{1}{k}.$$
  Notice that without loss of generality we can assume that for all $k \in \mathbb N$ and $t\in [0,1]$ we have $\phi_k(t) \ge 0$ as
  $$\left||\phi_k(t)| - 1_{\lambda(t) < 0}\right| \le |\phi_k(t) - 1_{\lambda(t) < 0}|.$$
The Stone-Weierstrass theorem (see e.g. \cite{timan_theory_2014}) can now be utilized to conclude that for every positive integer $k$, there exist polynomial functions $\tilde \phi_k: [0, 1] \rightarrow \mathbb R$, $\tilde u_k: [0, 1] \rightarrow \mathbb R^n$ such that
  $$0 \le \tilde \phi_k(t) - \phi_k(t) \le \frac 1k \text{ and } \|\tilde u_k(t) - u_k(t)\|_\infty^2 \le \frac1k \quad \forall t \in [0, 1].$$
  We can thus assume without loss of generality again that the functions $\phi_k$ and $u_k$ are polynomial functions of the variable $t$.

  Now let $P^k(t) = \phi_k(t) u_k(t)u_k(t)^T$.  Then (i) $P^k\in\SintRnn^+([0, 1]) \cap \mathbb R^{m \times m}[t],$ and (ii) $\|P^\infty - P^k\|_{\SintRnn} \rightarrow 0$ as $k\rightarrow\infty$, where $\|.\|_{\SintRnn}$ here denotes the norm associated to the scalar product $\langle .,.\rangle_{\SintRnn}$. From the Cauchy-Schwarz inequality we have  $$|\langle Q, P^\infty \rangle_{\SintRnn} - \langle Q, P^k \rangle_{\SintRnn}| \le \| Q \|_{\SintRnn} \|  P^\infty-P^k\|_{\SintRnn}.$$ As (i) implies that $\langle Q, P^k \rangle_{\SintRnn} \ge 0$ for all $k$, and (ii) implies that the right hand side of the above inequality goes to zero as $k$ goes to infinity, we conclude that $\langle Q, P^\infty \rangle_{\SintRnn}\geq 0.$    
    %
\end{proof}

\begin{customproof}{Theorem \ref{thm:weak_lim_satisfies_ineq}}
  For an element $y \in \LintRn$, we denote by $\tilde y$ the element of $\LintR{n+1}$ defined by $\tilde y \coloneqq \begin{pmatrix}1\\y\end{pmatrix}$. By applying Fubini's double integration theorem on the region $\{(t, s) \in [0, 1]^2 \; | \; s \le t\}$, it is straightforward to see that
  $$\langle Fy, P \rangle_{\SintRnn} = \langle \tilde y, F^*P\rangle_{\LintR{n+1}} \quad \forall y \in \LintR{n}, \; \forall P \in \SintRnn,$$
  where $F^*$ is the adjoint of the affine operator $F$ (see equation \eqref{eqn:Fstar} in Section \ref{sec:dual-approach} for its explicit expression).
Now fix a function $P \in \SintRnn^+([0, 1]) \cap \mathbb R^{m \times m}[t]$. Using the easy direction of Lemma \ref{lem:s.plus.self.dual} and the fact that $Ff_k \in \SintRnn^+([0, 1])$ for all $k$, we have that  $\langle Ff_k, P \rangle_{\SintRnn} \ge 0$ for all $k$. This implies that $\langle \tilde f_k, F^*P \rangle_{\LintR{n+1}} \ge 0$ for all $k$. By weak convergence, we conclude that $\langle \tilde f_{\infty}, F^*P \rangle_{\LintR{n+1}} \ge 0$, implying in turn that $\langle Ff_{\infty}, P \rangle_{\SintRnn} \ge 0$. \bachir{Since} $P$ was arbitrary in $\SintRnn^+([0, 1])\cap \mathbb R^{m \times m}[t]$, using Lemma \ref{lem:s.plus.self.dual} again, we have
$Ff_{\infty} \in \SintRnn^+([0, 1])$.
\end{customproof}

We are now ready to show that a bounded TV-SDP attains its optimal value. 

\begin{thm}
  \label{thm:optim-value-attained}
If the TV-SDP in \eqref{eqn:time_varying_sdp_l2} is feasible and satisfies the boundedness assumption in \eqref{eqn:assump.boundedness}, then there exists a feasible function \(x^\opt \in \LintRn\) that attains its optimal value.
\end{thm}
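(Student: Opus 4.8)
The plan is to use the direct method: take a maximizing sequence of feasible solutions, extract a weakly convergent subsequence via Theorem \ref{thm:bounded.measure.compact}, show the limit stays feasible via Theorem \ref{thm:weak_lim_satisfies_ineq}, and check that the objective passes to the limit. First I would record that the optimal value $\opt$ is finite: feasibility is assumed, and by the boundedness assumption \eqref{eqn:assump.boundedness} every feasible $x$ satisfies $\|x(t)\|_\infty \le \gamma$ a.e., so since $c$ is a bounded polynomial, $\langle c, x\rangle_\LintRn \le \gamma \int_0^1 \|c(t)\|_1\,{\rm d}t < \infty$. Hence there is a maximizing sequence of feasible functions $\{x_k\} \subseteq \mathcal F$ with $\langle c, x_k\rangle_\LintRn \to \opt$.

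Next I would package this sequence into linear functionals to which Theorem \ref{thm:bounded.measure.compact} applies. Set $\mu^k \coloneqq \tfrac1\gamma\, l_{x_k} \in \measureset$. For any $q \in \mathbb R^n[t]$ whose entries $q_i$ are nonnegative on $[0,1]$, the boundedness assumption gives
\[
|\mu^k(q)| = \frac1\gamma\left|\sum_{i=1}^n \int_0^1 x_{k,i}(t)\, q_i(t)\,{\rm d}t\right| \le \frac1\gamma \sum_{i=1}^n \int_0^1 |x_{k,i}(t)|\,q_i(t)\,{\rm d}t \le \sum_{i=1}^n \int_0^1 q_i(t)\,{\rm d}t,
\]
so the hypothesis of Theorem \ref{thm:bounded.measure.compact} is satisfied. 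The theorem then produces a function $g \in \LintRn$ and a subsequence (which I relabel $\{\mu^k\}$) with $\mu^k \weakcvg l_g$. Undoing the scaling, $l_{x_k} = \gamma \mu^k \weakcvg \gamma l_g = l_{\gamma g}$, that is, $x_k \weakcvg f_\infty$ with $f_\infty \coloneqq \gamma g \in \LintRn$ in the sense of Definition \ref{def:weak_convergence}.

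Then I would invoke closedness of the feasible set and continuity of the objective under weak convergence. Since each $x_k$ is feasible, $Fx_k \in \SintRnn^+([0,1])$, and $x_k \weakcvg f_\infty$, Theorem \ref{thm:weak_lim_satisfies_ineq} gives $Ff_\infty \in \SintRnn^+([0,1])$, so $f_\infty \in \mathcal F$. For the objective I crucially use that the data is polynomial, so $c \in \mathbb R^n[t]$; testing weak convergence against $c$ yields $\langle c, x_k\rangle_\LintRn = l_{x_k}(c) \to l_{f_\infty}(c) = \langle c, f_\infty\rangle_\LintRn$. Since the left-hand side also converges to $\opt$, we obtain $\langle c, f_\infty\rangle_\LintRn = \opt$, and $x^\opt \coloneqq f_\infty$ is the desired maximizer.

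The individual steps are short because the heavy lifting sits inside Theorems \ref{thm:bounded.measure.compact} and \ref{thm:weak_lim_satisfies_ineq}. The one point needing care — and the place where I would expect a reader to pause — is matching the normalization in the hypothesis of Theorem \ref{thm:bounded.measure.compact}, whose right-hand side carries a leading constant of $1$ rather than $\gamma$; the rescaling $\mu^k = l_{x_k}/\gamma$ handles this cleanly. A secondary subtlety is that weak convergence as defined tests only against \emph{polynomials}, so convergence of the objective relies essentially on $c$ being polynomial; were $c$ merely bounded and measurable, an additional density argument would be required.
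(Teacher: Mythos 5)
Your proof is correct and follows essentially the same route as the paper's: a maximizing sequence, the rescaled functionals $l_{x^k}/\gamma$ fed into Theorem \ref{thm:bounded.measure.compact} to extract a weakly convergent subsequence, Theorem \ref{thm:weak_lim_satisfies_ineq} for feasibility of the limit, and weak convergence tested against the polynomial $c$ for the objective. The only difference is that you spell out the normalization check and the reliance on $c$ being polynomial, which the paper leaves implicit.
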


\begin{proof}
Let $\opt$ denote the optimal value of \eqref{eqn:time_varying_sdp_l2}, which is finite under the assumptions of the theorem. From \eqref{eqn:assump.boundedness}, there exists a scalar $\gamma >0$ such that any feasible solution $x\in \LintRn$ to the TV-SDP satisfies $\|x(t)\|_{\infty} \le \gamma$ for all $t \in [0, 1]$ a.e.. Hence, for any positive integer $k$, there exists a feasible solution $x^k \in \LintRn$, with $ \|x^k(t)\|_\infty \le \gamma \; \forall t \in [0, 1] \text{ a.e.}$, such that
\begin{equation}
\label{eqn:near_opt}
\langle c, x^k \rangle_{\LintRn} \ge \opt - \frac 1k.
\end{equation}

Let us now consider the sequence of linear functionals \(\big\{\frac{l_{x^k}}{\gamma}\big\}\), which satisfies the conditions of Theorem  \ref{thm:bounded.measure.compact}. Therefore, a subsequence of the functions $\{x^k\}$ converges weakly to a limit $x^\infty \in \LintRn$. It is clear by weak convergence that \(x^{\infty}\) achieves the optimal value to \eqref{eqn:time_varying_sdp_l2}, and Theorem \ref{thm:weak_lim_satisfies_ineq} guarantees that $x^\infty$ is feasible to the TV-SDP. Letting $x^\opt=x^\infty$ gives the desired result.
\end{proof}

\section{The Primal Approach: Polynomial Solutions to a TV-SDP}
\label{sec:primal-approach}
We switch our focus in this section to algorithmic questions. 
We show in Section~\ref{sec:tvsdp_is_sdp} that when the data $c, A_0, \ldots, A_n, D_1, \ldots, D_n$ to our TV-SDP belongs to the class of polynomial functions, then the best polynomial solution of a given degree to the TV-SDP can be found by solving a semidefinite program of tractable size.
This motivates us to study whether one can always find feasible solutions to a TV-SDP that are arbitrarily close to being optimal just by searching over polynomial functions. While this is not always true (see Example \ref{ex:tv.sdp.with.no.poly.sol} below), in Section \ref{sec:poly_are_optimal} we show that it is true under a strict feasibility assumption (see Definition \ref{def:strict_feasibility_sdp}).



\begin{myexample}
  \label{ex:tv.sdp.with.no.poly.sol}
     Consider the TV-SDP in \eqref{eqn:time_varying_sdp_l2} with $n=1$, 
     \[c(t) = 0, A_0(t) = \begin{pmatrix}0&0&0&0\\0&\frac12-t&0&0\\0&0&1&0\\0&0&0&0\end{pmatrix}, A_1(t) = \begin{pmatrix}t-\frac12&0&0&0\\0&t-\frac12&0&0\\0&0&-1&0\\0&0&0&1\end{pmatrix}, \text{ and } D_1 = 0.\]
     The resulting constraints read
     $$\left(t-\frac12\right)x(t) \ge 0, \; \left(t-\frac12\right)(x(t)-1) \ge 0, \; 0 \le x(t) \le 1 \quad \forall t \in [0, 1] \text{ a.e.}.$$
   The unique feasible solution $x^{opt}(t)$ to this TV-SDP, up to a set of measure zero, is
   \[x^{opt}(t) = \left\{\begin{array}{ll}0,&\ \mbox{if}\ t \le \frac12, \\1,&\ \mbox{if}\ t > \frac12. \end{array}\right.\]
   It is clear that $x^{opt}$ is not continuous, let alone polynomial.
\end{myexample}


For the remainder of this paper, for a set $S \in \{\SintRnn, \SintRnn^+([0, 1])\}$ and a nonnegative integer $d$, we define $S_d$ to be the set of \bachir{functions} $x \in S$ whose entries are polynomials of degree $d$, i.e. $$\SintRnn_d = \SintRnn \cap \mathbb R_d^{m \times m}[t], \quad \SintRnn^+([0, 1])_d = \SintRnn^+([0, 1]) \cap \mathbb R_d^{m \times m}[t].$$

\subsection{Polynomials are optimal under a strict feasibility assumption}
\label{sec:poly_are_optimal}


We show in this section that under the following strict feasibility assumption, the optimal value of the TV-SDP in \eqref{eqn:time_varying_sdp_l2} remains the same when the function class $\LintRn$ is replaced with $\mathbb R^n[t]$. 


\begin{definition}\label{def:strict_feasibility_sdp}
  We say that the TV-SDP in \eqref{eqn:time_varying_sdp_l2} is strictly feasible if there exists a function $x^{s} \in \LintRn$ and a positive scalar \(\varepsilon\) such that
  $$Fx^s(t)  \succeq \varepsilon I \quad \forall t \in [0, 1] \; \text{a.e.}.$$
\end{definition}


\begin{thm}
  \label{thm:poly.optimal}
  Consider the TV-SDP in \eqref{eqn:time_varying_sdp_l2} with its optimal value denoted by $\opt$. If the TV-SDP is strictly feasible, then there exists a sequence of feasible polynomial solutions $\{x^k\}$ such that $$\langle c, x^k \rangle_\LintRn \rightarrow \opt \ \mbox{as}\ k\rightarrow \infty.$$
\end{thm}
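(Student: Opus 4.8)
The plan is to start from an arbitrary near-optimal feasible solution and turn it into a feasible \emph{polynomial} solution of essentially the same objective value, in three steps: first create a strict-feasibility margin by mixing with the strictly feasible solution $x^s$, then smooth to obtain a continuous solution that keeps this margin, and finally invoke Weierstrass to pass to polynomials. For \textbf{Step 1}, fix a tolerance $\delta>0$ and pick a feasible $x\in\LintRn$ with $\langle c,x\rangle_{\LintRn}\ge\opt-\delta$ (if $\opt=+\infty$, pick $x$ with $\langle c,x\rangle_{\LintRn}$ as large as desired and argue analogously). Since $F$ is affine, for $\theta\in(0,1)$ the convex combination $y_\theta\coloneqq(1-\theta)x+\theta x^s$ satisfies $Fy_\theta=(1-\theta)Fx+\theta Fx^s\succeq\theta\varepsilon I$ a.e., so $y_\theta$ is strictly feasible with margin $\eta\coloneqq\theta\varepsilon$, while $\langle c,y_\theta\rangle_{\LintRn}\to\langle c,x\rangle_{\LintRn}$ as $\theta\to0$. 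Thus it suffices to approximate a strictly feasible $y\coloneqq y_\theta$ (with margin $\eta$) by a feasible polynomial without losing much objective value.

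\textbf{Step 2 (smoothing while keeping feasibility).} The obstacle here is that $y$ is merely bounded and measurable, whereas the constraint $Fy(t)\succeq0$ is \emph{pointwise}; a plain $L^2$-approximation of $y$ by a continuous function gives no pointwise control on the non-kernel term $\sum_i y_i(t)A_i(t)$ and can destroy positive semidefiniteness. I would instead smooth by a normalized mollifier that samples only inside $[0,1]$: for small $h>0$ set $\tilde y(t)\coloneqq\int_0^1 K_h(t,s)\,y(s)\,ds$, where $K_h(t,\cdot)\ge0$, $\int_0^1 K_h(t,s)\,ds=1$, and $K_h(t,\cdot)$ is concentrated within distance $h$ of $t$. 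Then $\tilde y$ is continuous (indeed smooth), and $\tilde y\to y$ in $L^2$ as $h\to0$, so $\langle c,\tilde y\rangle_{\LintRn}\to\langle c,y\rangle_{\LintRn}$. The crucial point is that $\int_0^1 K_h(t,s)Fy(s)\,ds\succeq\eta I$, being a convex average of matrices $Fy(s)\succeq\eta I$; and because the data $A_0,A_i,D_i$ are bounded and (being polynomials) uniformly continuous, the operator $F$ nearly commutes with this averaging, $\|F\tilde y(t)-\int_0^1 K_h(t,s)Fy(s)\,ds\|=O(h)$ uniformly in $t$, with the estimate governed by the moduli of continuity of $A_0,A_i,D_i$ together with $\|y\|_\infty$ and $\sup\|D_i\|$. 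Hence for $h$ small, $F\tilde y(t)\succeq(\eta/2)I$ for all $t\in[0,1]$, using continuity of $F\tilde y$ to upgrade from a.e.\ to everywhere.

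\textbf{Step 3 (polynomials).} Now $\tilde y$ is continuous, so by the Stone--Weierstrass theorem each component can be approximated uniformly by a polynomial, yielding $p\in\mathbb R^n[t]$ with $\|p-\tilde y\|_\infty\le\beta$. Using boundedness of the data, $\|Fp(t)-F\tilde y(t)\|\le C\beta$ uniformly in $t$ (the non-kernel part is controlled by $\sum_i\|A_i\|_\infty\,\beta$ and the kernel part by $\sum_i\sup\|D_i\|\,\beta$), so for $\beta$ small enough $Fp(t)\succeq(\eta/4)I\succ0$ for all $t$; in particular $p$ is a feasible polynomial solution. Moreover $\langle c,p\rangle_{\LintRn}\to\langle c,\tilde y\rangle_{\LintRn}$ as $\beta\to0$. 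Chaining the three steps and letting $\delta,\theta,h,\beta\to0$ in turn produces polynomials whose objective values approach $\opt$, and a diagonal selection gives the desired sequence $\{x^k\}$.

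The main obstacle is Step 2: reconciling the pointwise semidefinite constraint with the fact that the near-optimal solution need only be bounded and measurable. The resolution is that mollification respects the convex cone of positive semidefinite matrices under averaging, and the strict-feasibility margin from Step 1 is exactly what absorbs both the $O(h)$ smoothing error here and the $O(\beta)$ polynomial-approximation error in Step 3. A secondary technical point, also handled by sampling only inside $[0,1]$, is to avoid boundary artifacts at $t=0$ and $t=1$, where a symmetric mollifier would require (infeasible, or undefined) values of $y$ outside the interval.
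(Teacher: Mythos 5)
Your proposal is correct and follows essentially the same route as the paper's proof: perturb toward the strictly feasible point $x^s$ to create a uniform margin, mollify to obtain a continuous solution while showing $F$ nearly commutes with the averaging (the paper's Lemma \ref{lem:property.mollifiers} and Corollary \ref{cor:F.molly.commute}), and finish with Stone--Weierstrass, letting the margin absorb both approximation errors. Your use of a normalized kernel with $\int_0^1 K_h(t,s)\,ds=1$ is a minor refinement that cleanly handles the boundary degradation of the paper's mollifier near $t=0,1$, but it does not change the substance of the argument.
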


As we will shortly see in the proof, the strict feasibility assumption enables us to approximate any feasible solution of \eqref{eqn:time_varying_sdp_l2} by a continuous, and later polynomial, solution. We use \emph{mollifying operators} to obtain the continuous approximation. 

\begin{definition}
  [See \cite{adams_sobolev_2003}]
  \label{def:mollifiers}
 The  \emph{mollifying operator} $\mathcal M_v: \LintR1 \rightarrow \LintR1$, indexed by a nonnegative integer $v$, is the linear operator  defined by
  $$(\mathcal M_v f) (t) = \int_0^1 v J(v(t-s)) f(s) {\rm d}s \quad  \forall f \in \LintR1$$
  where $J(t) = c\exp(-\frac{1}{1-t^2})$ when $t \in [-1, 1]$ and $J(t) = 0$ otherwise, and $c$ is so that $\int_{\mathbb R} J(t) \; {\rm d}t = 1$.
\end{definition}
\begin{remark}
\label{rmk:mollifiers}
	To lighten our notation, we write $\mathcal M_v f (t)$ instead of $(\mathcal M_v f) (t)$. We also remark that one can extend the definition of mollifying operators to functions that are not scalar valued by making them act element-wise. For example, the extension to spaces $\LintRn$ and $\SintRnn$ would be defined as follows:
  $$\mathcal M_v f \coloneqq (\mathcal M_v f_i)_i \; \forall f\in \LintRn \text{ and } \mathcal M_v P \coloneqq (\mathcal M_v P_{ij})_{ij} \; \forall P \in \SintRnn.$$
  Any property of mollifying operators that we prove on scalar-valued functions below
  extends in a straightforward manner to functions that are vector or matrix valued.
\end{remark}

\begin{prop}
  [See Theorem 2.29 in \cite{adams_sobolev_2003}]
  \label{prop:property.mollifiers}
  For all $f \in \LintR1$ and all $v \in \mathbb N$,  the function \(\mathcal M_v f\) is continuous. Moreover,
  $$\int_0^1 |\mathcal M_v f(t) - f(t)| {\rm d}t \rightarrow 0 \text{ as } v \rightarrow \infty.$$
  Furthermore, if $f$ is a continuous function of $t$, then 
  $$\sup_{t \in [0, 1]} |\mathcal M_v f(t) - f(t)| \rightarrow 0 \text{ as } v \rightarrow \infty.$$
\end{prop}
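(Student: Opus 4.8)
The plan is to view $\mathcal M_v$ as convolution against the rescaled bump $J_v(u) := vJ(vu)$, which is a nonnegative approximate identity ($\int_{\mathbb R} J_v = \int_{\mathbb R} J = 1$, $\mathrm{supp}\, J_v = [-1/v,1/v]$), truncated to the window $[0,1]$. The three assertions are the three textbook properties of mollifiers, and I would establish them in the order: continuity, uniform convergence for continuous $f$, then $L^1$ convergence for general $f$. Continuity of $\mathcal M_v f$ for fixed $v$ is the easiest and follows purely from smoothness of $J$ and boundedness of $f$. Since $J \in C^\infty$ has compact support, $M := \sup_{u}|J'(u)| < \infty$, so the mean value theorem gives $|J(v(t-s)) - J(v(t'-s))| \le vM|t-t'|$ for every $s$. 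Multiplying by $v$, integrating against $f$ over $[0,1]$, and using $f \in \LintR1$, I get
$$|\mathcal M_v f(t) - \mathcal M_v f(t')| \le v^2 M \Big(\sup_{s\in[0,1]}|f(s)|\Big)|t-t'|,$$
so $\mathcal M_v f$ is in fact Lipschitz, hence continuous.

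For the uniform statement I would use uniform continuity of $f$ on the compact set $[0,1]$. Setting $w_v(t) := \int_0^1 vJ(v(t-s))\,ds$ and splitting,
$$\mathcal M_v f(t) - f(t) = \int_0^1 vJ(v(t-s))\big(f(s)-f(t)\big)\,ds + f(t)\big(w_v(t)-1\big).$$
Because $J_v$ is supported in $[-1/v,1/v]$, the first term is bounded by $w_v(t)\cdot\sup_{|s-t|\le 1/v}|f(s)-f(t)|$, which tends to $0$ uniformly in $t$ by uniform continuity. The second term is where the only real difficulty lies: truncating the convolution to $[0,1]$ makes $w_v(t)=1$ only on the interior strip $[1/v,\,1-1/v]$, while $w_v$ decays toward $1/2$ at the endpoints, so $w_v-1$ does not vanish uniformly up to the boundary. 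I expect this boundary defect to be the main obstacle. The clean way out is to observe that the estimate already gives uniform convergence on every compact subinterval of $(0,1)$ — which is precisely what Adams' Theorem 2.29 asserts — and to absorb the $O(1/v)$-wide boundary strip either by restricting attention to the interior or by first extending $f$ continuously slightly past $0$ and $1$ so that $w_v \equiv 1$.

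Finally, for $L^1$ convergence with arbitrary $f \in \LintR1$, I would combine two facts. First, $\mathcal M_v$ is an $L^1$-contraction: by Tonelli's theorem and $J \ge 0$,
$$\int_0^1 |\mathcal M_v f(t)|\,dt \le \int_0^1 |f(s)|\Big(\int_0^1 vJ(v(t-s))\,dt\Big)ds \le \int_0^1 |f(s)|\,ds,$$
using $\int_{\mathbb R}J=1$. Second, for continuous $g$ the previous step gives convergence (the boundary strip now contributes only $O(1/v)\to 0$ to the $L^1$ norm, so it is harmless here). Since continuous functions are dense in $L^1([0,1])$, a $3\varepsilon$ argument closes the proof: choose continuous $g$ with $\int_0^1|f-g|\le\varepsilon$ and bound $\int_0^1|\mathcal M_v f - f|$ by $\int_0^1|\mathcal M_v(f-g)| + \int_0^1|\mathcal M_v g - g| + \int_0^1|g-f|$, where the outer terms are $\le\varepsilon$ by the contraction property and the choice of $g$, and the middle term vanishes as $v\to\infty$ by the uniform (hence $L^1$) convergence for continuous functions.
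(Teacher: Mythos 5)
The paper offers no proof of this proposition --- it is quoted directly from Adams--Fournier, Theorem 2.29 --- so there is no internal argument to compare against; yours is the standard mollifier proof. The continuity part (Lipschitz bound via $\sup_u|J'(u)|$ and boundedness of $f$) and the $\LintR1$-convergence part ($L^1$-contraction by Tonelli, density of continuous functions, $3\varepsilon$ split) are correct and complete. The substantive content of your write-up is the observation about the uniform-convergence claim, and you are right that it is the weak point: as printed, the third assertion is in fact \emph{false} at the endpoints. Because the integral in Definition~\ref{def:mollifiers} is truncated to $[0,1]$, the mass $w_v(t)=\int_0^1 vJ(v(t-s))\,{\rm d}s$ equals $1$ only for $t\in[1/v,\,1-1/v]$ and equals $1/2$ at $t\in\{0,1\}$, so $\mathcal M_v f(0)\rightarrow f(0)/2$ and $\sup_{t\in[0,1]}|\mathcal M_v f(t)-f(t)|\not\rightarrow 0$ whenever $f(0)\neq 0$ or $f(1)\neq 0$. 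Adams' theorem asserts uniform convergence only on compact subsets of the open domain; the proposition silently strengthens this to the closed interval. Your two proposed repairs are the right ones, with one caveat: ``extend $f$ continuously past $0$ and $1$'' requires modifying the operator itself (its integral ranges over $[0,1]$ only), e.g.\ by integrating the extension over $[t-1/v,\,t+1/v]$ or reflecting $f$ across the endpoints. It is worth noting that the endpoint defect propagates: Lemma~\ref{lem:property.mollifiers}(c), Corollary~\ref{cor:F.molly.commute}, and the strict-feasibility perturbation in the proof of Theorem~\ref{thm:poly.optimal} all invoke the uniform statement on all of $[0,1]$, so whichever repair is adopted here must be carried through there as well.
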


\begin{lemma}
  \label{lem:property.mollifiers}For  for any $v \in \mathbb N$, the mollifying operator $\mathcal M_v$ satisfies the following properties:
  \begin{itemize}
  \item[(a)] For any $M \in \SintRnn$, if $M(t) \succeq 0 \; \forall t\in [0, 1] \text{ a.e.}$, then $\mathcal M_v M(t) \succeq 0 \; \forall t\in [0, 1]$.
  \item[(b)] For any $a \in \mathbb R[t]$ and $x \in \LintR1$, $\sup_{t \in [0, 1]}|a(t) \mathcal M_v x(t) - \mathcal M_v (a \cdot x) (t)|  \rightarrow 0$ as $v \rightarrow \infty$.
  \item[(c)] For any polynomial function $d: \mathbb R^2 \rightarrow \mathbb R$ and $x \in \LintR1$, let $g(t) = \int_0^t d(t, s) x(s) {\rm d} s$. Then $$\sup_{t \in [0, 1]} \left|\mathcal M_v g(t) -  \int_0^t d(t, s) \mathcal M_v x(s) {\rm d} s\right| \rightarrow 0 \text{ as } v \rightarrow \infty.$$
  \end{itemize}
\end{lemma}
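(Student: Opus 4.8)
The three statements are of increasing difficulty, so the plan is to treat them in order and reuse the earlier parts. For part (a), I would use that $\mathcal M_v$ acts entrywise (Remark~\ref{rmk:mollifiers}), so that $\mathcal M_v M(t) = \int_0^1 v J(v(t-s)) M(s)\,{\rm d}s$, and that the scalar kernel $v J(v(t-s))$ is \emph{nonnegative} (since $J = c\exp(-1/(1-t^2)) \ge 0$ with $c>0$). Then for any fixed $w \in \mathbb R^m$ and any $t$,
$$w^T \mathcal M_v M(t) w = \int_0^1 v J(v(t-s))\, \big(w^T M(s) w\big)\,{\rm d}s \ge 0,$$
because $w^T M(s) w \ge 0$ for almost every $s$ and the kernel is nonnegative. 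As this holds for all $w$ and $t$, I conclude $\mathcal M_v M(t) \succeq 0$ for all $t \in [0,1]$. This part is essentially immediate.

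For part (b), I would rewrite the quantity inside the supremum as a single integral by pulling $a(t)$ inside the mollification:
$$a(t)\,\mathcal M_v x(t) - \mathcal M_v(a\cdot x)(t) = \int_0^1 v J(v(t-s))\,\big(a(t)-a(s)\big)\,x(s)\,{\rm d}s.$$
The key observation is that the kernel $s \mapsto v J(v(t-s))$ is supported on $\{s : |t-s|\le 1/v\}$. Letting $L$ be a Lipschitz constant of the polynomial $a$ on $[0,1]$, on this support $|a(t)-a(s)| \le L|t-s| \le L/v$, so the integrand is bounded pointwise by $\tfrac{L}{v}\|x\|_\infty\, v J(v(t-s))$. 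Since the substitution $u = v(t-s)$ gives $\int_0^1 v J(v(t-s))\,{\rm d}s \le \int_{\mathbb R} J = 1$, I obtain the uniform bound $\sup_{t\in[0,1]} |a(t)\mathcal M_v x(t) - \mathcal M_v(a\cdot x)(t)| \le L\|x\|_\infty/v$, which tends to $0$.

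Part (c) is the main obstacle, and my plan is to reduce it to parts (a)--(b) together with Proposition~\ref{prop:property.mollifiers}. First, expand the polynomial $d(t,s)=\sum_{a,b} c_{ab}\,t^a s^b$ (a finite sum); setting $h_b(t)=\int_0^t s^b x(s)\,{\rm d}s$ and $\tilde h_b(t)=\int_0^t s^b \mathcal M_v x(s)\,{\rm d}s$, linearity of $\mathcal M_v$ turns the target quantity into $\sum_{a,b} c_{ab}\big(\mathcal M_v(t^a h_b)(t) - t^a \tilde h_b(t)\big)$, so by the triangle inequality it suffices to show $\sup_t|\mathcal M_v(t^a h_b)(t) - t^a\tilde h_b(t)| \to 0$ for each pair $(a,b)$. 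Note each $h_b$ is Lipschitz (hence continuous) and bounded, so $h_b \in \LintR1$. Applying part (b) with polynomial $t^a$ and function $h_b$ handles the gap $\sup_t|\mathcal M_v(t^a h_b)(t) - t^a\mathcal M_v h_b(t)| \to 0$, and since $|t^a|\le 1$ on $[0,1]$ it then remains to prove the single uniform estimate $\sup_t|\mathcal M_v h_b(t) - \tilde h_b(t)| \to 0$.

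This last estimate is really the statement that mollification approximately commutes with the Volterra integration operator $\psi \mapsto \int_0^\cdot \psi$, and here the boundary truncation of the mollifier is the delicate point. I would control it by inserting $h_b(t)=\int_0^t s^b x(s)\,{\rm d}s$ as an intermediate quantity and chaining three uniform-in-$t$ bounds. Writing $\psi(s)=s^b x(s)$: (i) $\tilde h_b(t)$ is close to $\int_0^t \mathcal M_v\psi(s)\,{\rm d}s$ since, by part (b), $\sup_s|s^b\mathcal M_v x(s)-\mathcal M_v\psi(s)|\to 0$, and integrating over $[0,t]\subseteq[0,1]$ preserves this; (ii) $\int_0^t \mathcal M_v\psi(s)\,{\rm d}s$ is close to $h_b(t)=\int_0^t\psi(s)\,{\rm d}s$ because $\int_0^t|\mathcal M_v\psi-\psi| \le \int_0^1|\mathcal M_v\psi-\psi|\to 0$ by the $L^1$-convergence in Proposition~\ref{prop:property.mollifiers}; and (iii) $\mathcal M_v h_b(t)$ is close to $h_b(t)$ by the \emph{uniform} convergence in Proposition~\ref{prop:property.mollifiers}, valid precisely because $h_b$ is continuous. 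Combining (i)--(iii) through the triangle inequality yields $\sup_t|\mathcal M_v h_b(t)-\tilde h_b(t)|\to 0$, completing part (c). The crux of the whole lemma is thus isolating this commutation step and noticing that $L^1$-convergence suffices for the integrated terms while continuity of $h_b$ unlocks the uniform bound.
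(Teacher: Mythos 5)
Your proofs of (a) and (b) are essentially identical to the paper's: (a) is the nonnegativity of the kernel $J$, and (b) is the same rewriting as a single integral against $vJ(v(t-s))(a(t)-a(s))x(s)$ followed by the Lipschitz bound $|a(t)-a(s)|\le L|t-s|$ and the normalization of $J$. For (c), your argument is correct but takes a genuinely more roundabout route than the paper's. The paper never expands $d(t,s)$ into monomials and never reuses part (b); it simply inserts $g(t)$ itself as the pivot in a two-term triangle inequality,
$$\left|\mathcal M_v g(t)-\int_0^t d(t,s)\mathcal M_v x(s)\,{\rm d}s\right|\le |\mathcal M_v g(t)-g(t)|+\sup_{t,s}|d(t,s)|\int_0^1|x(s)-\mathcal M_v x(s)|\,{\rm d}s,$$
killing the first term by uniform convergence of $\mathcal M_v g\to g$ (valid since $g$ is continuous) and the second by $L^1$-convergence of $\mathcal M_v x\to x$, both from Proposition \ref{prop:property.mollifiers}. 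Your chain of estimates (i)--(iii) applied to each $h_b$ uses exactly the same two ingredients, but routes them through the monomial decomposition of $d$ and two extra applications of part (b); the net effect is the same uniform bound, obtained with more bookkeeping. What your version makes explicit --- and the paper's elides --- is precisely which term absorbs the $t$-dependence of $d(t,s)$; the paper handles it in one stroke by bounding $|d(t,s)|$ by its supremum inside the integral, which is the observation that lets you skip the expansion entirely.
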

\begin{proof}
  The proof of (a) simply follows from the fact that the function $J$ is nonnegative on $\mathbb R$.
  
  To prove (b), let $L \coloneqq \sup_{t \in [0, 1]} |a'(t)|$ and $\gamma \coloneqq \sup_{t \in [0, 1]} |x(t)|$. Notice that
  \begin{align*}
    a(t) \mathcal M_v x(t) - \mathcal M_v (a\cdot x) (t)
    & = \int_0^1 v J(v(t-s)) (a(t) - a(s))x(s) {\rm d} s.
  \end{align*}
  Hence,
  $$|a(t) \mathcal M_v x(t) - \mathcal M_v (ax) (t)| \le L \gamma \int_0^1 v J(v(t-s)) |t - s| {\rm d} s.$$
By the change of variable $u \coloneqq v(s-t)$ and in view of the evenness of the function $J,$ we get
  $$\int_0^1 v J(v(t-s)) |t-s|{\rm d}s=  \frac1v \int_{-vt}^{v(1-t)}  J(u) \left| u\right|{\rm d} u \le \frac1v \int_{-1}^1  J(u) {\rm d}u \le \frac 1v.$$
  Therefore,
  $$\sup_{t \in [0, 1]}|a(t) \mathcal M_v x(t) - \mathcal M_v (a\cdot x) (t)| \le \frac{L\gamma}v$$ 
  and the claim follows.
  
  Let us now prove $(c)$. Observe that, on the one hand, for every $t \in [0, 1]$,
  \begin{align*}
    \left|\mathcal M_v g(t) -  \int_0^t d(t, s) \mathcal M_v x(s) \; {\rm d}s \right|
    & \le  \left|\mathcal M_v g(t) - g(t)\right| + \left|g(t) - \int_0^t d(t, s) \mathcal M_v x(s) {\rm d} s\right|
    \\& \le  \left|\mathcal M_v g(t) - g(t)\right| + \int_0^t |d(t, s)| \cdot |x(s) - \mathcal M_v x(s)| \; {\rm d} s
    \\& \le  \left|\mathcal M_v g(t) - g(t)\right| + \sup_{t,s \in [0, 1]} |d(t, s)| \int_0^1  |x(s) - \mathcal M_v x(s)| \; {\rm d} s.
  \end{align*}
	On the other hand, from Proposition \ref{prop:property.mollifiers} (and continuity of $g$), we know that
	 $$\sup_{t \in [0, 1]}|\mathcal M_vg (t)- g(t)| \rightarrow 0 \text{ as } v \rightarrow \infty,$$
	 and 
   $$\int_0^1 |\mathcal M_v x(s)  - x(s)| \; {\rm d}s \rightarrow 0 \text{ as }v \rightarrow \infty.$$
   Combining these three facts, we conclude that
  $$\sup_{t \in [0, 1]} |\mathcal M_v g(t) -  \int_0^t d(t, s) \mathcal M_v x(s) {\rm d} s| \rightarrow 0 \text{ as } v \rightarrow \infty.$$
\end{proof}  
  



Properties (b) and (c) in Lemma \ref{lem:property.mollifiers} give the following corollary.
\begin{cor}
  \label{cor:F.molly.commute}
Let $F$ be as in \eqref{eqn:F} with  $A_0, \ldots, A_n, D_1, \ldots D_n$  polynomial and let $\|\cdot\|$ be any matrix norm. Then, 
  $$\sup_{t \in [0, 1]} \|\mathcal M_v Fx (t)-  F \mathcal M_vx(t)\| \rightarrow 0 \text{ as } v \rightarrow \infty.$$
\end{cor}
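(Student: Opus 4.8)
The plan is to expand $F\mathcal{M}_v x(t)$ and $\mathcal{M}_v Fx(t)$ using the explicit definition of $F$ in \eqref{eqn:F}, and then bound the norm of their difference term by term, invoking the corresponding parts of Lemma \ref{lem:property.mollifiers}. Writing out the two expressions:
\begin{align*}
F\mathcal{M}_v x(t) &= A_0(t) + \sum_{i=1}^n (\mathcal{M}_v x_i)(t) A_i(t) + \sum_{i=1}^n \int_0^t (\mathcal{M}_v x_i)(s) D_i(t,s)\,{\rm d}s,\\
\mathcal{M}_v Fx(t) &= \mathcal{M}_v\!\left(A_0 + \sum_{i=1}^n x_i A_i + \sum_{i=1}^n \int_0^\cdot x_i(s) D_i(\cdot,s)\,{\rm d}s\right)(t).
\end{align*}
Since mollification acts entrywise and is linear (Remark \ref{rmk:mollifiers}), I would distribute $\mathcal{M}_v$ across this sum. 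The constant (in the decision variable) term $A_0$ poses no issue because mollification commutes trivially with itself; more precisely the $A_0$ terms appear identically in both expressions only after noting $\mathcal{M}_v A_0$ need not equal $A_0$, so I would rather group terms so that the affine part $A_0$ cancels when I subtract---indeed $A_0$ appears as-is in $F\mathcal{M}_v x$ but mollified in $\mathcal{M}_v Fx$, so I must keep track of $\|\mathcal{M}_v A_0 - A_0\|$ as a separate contribution, which tends to zero by Proposition \ref{prop:property.mollifiers} since $A_0$ is polynomial and hence continuous.

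The subtraction then leaves three groups of terms whose sup-norms I bound separately. First, the $A_0$ discrepancy $\|\mathcal{M}_v A_0(t) - A_0(t)\|$ vanishes uniformly by the continuity statement in Proposition \ref{prop:property.mollifiers}. Second, for each $i$ the term $(\mathcal{M}_v x_i)(t) A_i(t) - \mathcal{M}_v(x_i A_i)(t)$ is controlled entrywise: each entry of $A_i$ is a polynomial $a(t)$, so applying Lemma \ref{lem:property.mollifiers}(b) with this $a$ and the scalar function $x_i$ gives uniform convergence to zero; summing over the finitely many indices $i$ and the finitely many matrix entries preserves this. Third, the kernel terms $\int_0^t (\mathcal{M}_v x_i)(s) D_i(t,s)\,{\rm d}s - \mathcal{M}_v\!\big(\int_0^\cdot x_i(s) D_i(\cdot,s)\,{\rm d}s\big)(t)$ are handled entrywise by Lemma \ref{lem:property.mollifiers}(c), with the polynomial $d(t,s)$ taken to be each entry of $D_i(t,s)$, again yielding uniform convergence to zero after summing over the finite index set.

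Combining the three bounds via the triangle inequality (valid since all matrix norms are equivalent, so it suffices to prove the claim for, say, the entrywise max-norm and then transfer to an arbitrary $\|\cdot\|$ by equivalence of norms on the finite-dimensional space of $m\times m$ matrices) gives
$$\sup_{t \in [0,1]} \|\mathcal{M}_v Fx(t) - F\mathcal{M}_v x(t)\| \to 0 \text{ as } v \to \infty,$$
which is the desired conclusion. I do not anticipate a genuine obstacle here; the result is essentially a bookkeeping assembly of the lemma's three parts, and the only point demanding mild care is the reduction to a convenient norm and the correct pairing of the polynomial matrix entries $A_i$ and kernel entries $D_i$ with the scalar statements (b) and (c). The equivalence-of-norms reduction is what makes the statement hold for \emph{any} matrix norm as claimed, so I would state that reduction explicitly at the outset.
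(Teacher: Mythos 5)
Your proposal is correct and follows exactly the route the paper intends: the paper offers no written proof beyond the remark that properties (b) and (c) of Lemma \ref{lem:property.mollifiers} give the corollary, and your write-up is precisely the term-by-term bookkeeping that remark leaves implicit (expand $F$, handle the $A_i$ products entrywise via (b), the kernel terms via (c), and reduce to a convenient norm by equivalence of norms). Your additional care with the $\mathcal M_v A_0 - A_0$ discrepancy, controlled by Proposition \ref{prop:property.mollifiers}, is a correct detail that the paper's one-line justification glosses over.
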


We now go back to the proof of optimality of polynomial solutions. The idea is as follows. We know that for any $\varepsilon > 0$ there exists a feasible solution $x^\varepsilon$ whose objective value is \bachir{within} $\varepsilon$ of $\opt$ when $\opt < \infty$ and larger than $\frac1\varepsilon$ when $\opt = +\infty$. 
  We construct a sequence of feasible polynomial solutions whose objective value \bachir{converge} to $\langle c, x^\varepsilon \rangle_\LintRn$. We do so in three steps. First, using existence of a strictly feasible solution \(x^s\) to \eqref{eqn:time_varying_sdp_l2}, we perturb \(x^\varepsilon\) slightly to make it strictly feasible without changing its objective value by much. Second, we approximate the perturbed solution by a continuous solution using mollifying operators. Finally, we invoke the Stone-Weierstrass theorem to approximate the continuous solution with a polynomial solution.

\newcommand{\nearopt}{x^{\varepsilon}}
\newcommand{\stnearopt}{x^{\lambda, \varepsilon}}
\newcommand{\mollnearopt}{x^{v, \lambda, \varepsilon}}
\newcommand{\polynearopt}{p^{s, v, \lambda, \varepsilon}}

\begin{customproof}{of Theorem \ref{thm:poly.optimal}}
For any $\varepsilon > 0$, let $x^\varepsilon$ be a feasible solution to the TV-SDP in \eqref{eqn:time_varying_sdp_l2} such that
		$$\opt-\langle c, x^\varepsilon \rangle_\LintRn \le \varepsilon \text{ if $\opt < \infty$ and } \langle c, x^\varepsilon \rangle_\LintRn \ge \frac1\varepsilon \text{ if $\opt = +\infty$}.$$
  Let $x^s$ be any strictly feasible solution to the TV-SDP in \eqref{eqn:time_varying_sdp_l2} and for \(\lambda \in (0, 1)\) let  $$x^{\lambda,\varepsilon} := (1-\lambda) x^\varepsilon + \lambda x^s.$$
   Observe that for all $\varepsilon > 0$ and $\lambda \in (0, 1)$ the function \(x^{\lambda,\varepsilon}\) is also strictly feasible to \eqref{eqn:time_varying_sdp_l2}. 
   Moreover, as $\lambda  \rightarrow 0$, $\langle c, x^{\lambda, \varepsilon} \rangle_\LintRn \rightarrow \langle c, x^{\varepsilon} \rangle_\LintRn $.
  
  For a nonnegative integer $v$, let $\mathcal M_v$ be the mollifying operator that appears in Definition \ref{def:mollifiers} and Remark \ref{rmk:mollifiers}.
  For all $\varepsilon > 0$, $\lambda \in (0, 1)$, and $v \in \mathbb N$, let \(\mollnearopt \coloneqq \mathcal M_v \stnearopt\). The function \(\mollnearopt\) is  continuous by Proposition \ref{prop:property.mollifiers}.
   We claim that $\mollnearopt$ is strictly feasible to the TV-SDP in \eqref{eqn:time_varying_sdp_l2} for any $\varepsilon > 0$ and $\lambda \in (0, 1)$ when $v$ is large enough.
   Indeed, for any such $\varepsilon$ and $\lambda$, there exists $\beta_{\lambda, \varepsilon} > 0$ such that
   $F   \stnearopt(t) \succeq \beta_{\lambda, \varepsilon} I \; \forall t \in [0, 1] \text{ a.e.}$.
   By property (a) of Lemma \ref{lem:property.mollifiers}, 
   $$\mathcal M_v F   \stnearopt(t) \succeq \beta_{\lambda, \varepsilon} I \; \forall t \in [0, 1].$$
  Let $\|\cdot\|$ be any matrix norm. Using Corollary \ref{cor:F.molly.commute}, 
  $$\sup_{t \in [0, 1]} \| \mathcal M_v F\stnearopt(t) -  F\mollnearopt(t)\| \rightarrow 0 \text{ as } v \rightarrow \infty.$$
  By continuity of the minimum eigenvalue function we conclude that for $v$ high enough, $$F\mollnearopt(t) \succeq \frac{\beta_{\lambda, \varepsilon}}2 I\; \forall t \in [0, 1].$$
	Moreover, for all $\varepsilon > 0$, $\lambda \in (0, 1)$, Proposition \ref{prop:property.mollifiers} implies that
	$$\langle c, \mollnearopt \rangle_\LintRn \rightarrow \langle c, \stnearopt \rangle_\LintRn \text{ as } v \rightarrow \infty.$$ 


 As a final step,  for a fixed $\varepsilon > 0$, $\lambda \in (0, 1)$, and $v \in \mathbb N$, we invoke the Stone-Weierstrass theorem to approximate \(\mollnearopt\) by a sequence $\{\polynearopt\}_{s \in \mathbb N}$ of polynomial elements of $\LintRn$ such that
 $$\sup_{t \in [0, 1]}\|\mollnearopt(t) - \polynearopt(t)\|_{\infty} \rightarrow 0 \text{ as } s \rightarrow \infty.$$

 Note that 
 $$\underset{t \in [0, 1]}\sup \|F\mollnearopt(t) - F\polynearopt(t)\| \le C \underset{t \in [0, 1]}\sup \|\mollnearopt(t) -\polynearopt(t)\|_\infty$$
  where $C \coloneqq \underset{t,s \in [0, 1]}\sup \sum_{i=1}^n\|A_i(t)\|+ \|D_i(s, t)\|.$
 By the same reasoning as before, for $s$ high enough, the polynomial $\polynearopt$ will be (strictly) feasible to our TV-SDP. Moreover, 
 $$\langle c, \polynearopt \rangle_\LintRn \rightarrow \langle c, \mollnearopt \rangle_\LintRn \text{ as } s \rightarrow \infty.$$
 To get the overall result, fix $\varepsilon$ small enough, then $\lambda$ small enough, then $v$ large enough, and then $s$ large enough.
\end{customproof}

\subsection{Finding the best polynomial solution to a TV-SDP via SDP}
\label{sec:tvsdp_is_sdp}

In this section, we show how one can find the best polynomial solution of a given degree to a TV-SDP. This is done by reformulating the problem as a semidefinite program. This formulation is based on the fact that any univariate polynomial matrix \(X(t)\) that is positive semidefinite over an interval has a certain sum of squares representation of low degree. This representation can be found by semidefinite programming using the well-known connection (see e.g. \cite{parrilo_semidefinite_2003, lasserre_global_2001}) between sum of squares polynomials and SDPs.

\newcommand \nummonomials {pm}
\newcommand \psdmatrices[1][pm]{{\mathcal S^{#1}}^+}


\begin{thm}\bachir{\bf (See \cite[Theorem 2.5]{dette_matrix_2002}, \cite[Theorem 6.11]{papp2013semidefinite}, and see~\cite{aylward2007explicit} for a history of related proofs)}
 \label{thm:characeterization.sos.matrix}
Let $X \in \SintRnn_{d}$ be a univariate $m \times m$ polynomial matrix of degree $d$.  If $d$ is odd,
then $X(t) \succeq 0\; \forall t \in [0, 1]$ if and only if there exist (not necessarily square) polynomial matrices $B_1$ and $B_2$  of degree $\frac{d-1}2$ such that
$$X(t) = t B_1(t)^TB_1(t) + (1-t) B_2(t)^TB_2(t).$$
Similarly, if $d$ is even,
then $X(t) \succeq 0\; \forall t \in [0, 1]$ if and only if there exist (not necessarily square) polynomial matrices $B_1$ and $B_2$ of degree $\frac{d}2$ and $\frac{d}2-1$ respectively such that
$$X(t) = B_1(t)^TB_1(t) + t(1-t) B_2(t)^TB_2(t).$$
%
\end{thm}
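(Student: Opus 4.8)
The plan is to prove the ``if'' direction directly and to reduce the ``only if'' direction to a spectral factorization theorem for matrix polynomials that are positive semidefinite on all of $\mathbb{R}$. For the ``if'' direction, suppose $X$ admits one of the two claimed representations. For every fixed $t \in [0,1]$ the scalars $t$, $1-t$, and $t(1-t)$ are nonnegative, while each $B_i(t)^T B_i(t)$ is a Gram matrix and hence positive semidefinite; a nonnegative combination of positive semidefinite matrices is positive semidefinite, so $X(t) \succeq 0$ for all $t \in [0,1]$ in both cases. The degree check is immediate, e.g. $\deg(t(1-t)B_2^T B_2) = 2 + 2\deg B_2 = d$ in the even case.

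For the ``only if'' direction, the key idea is the rational change of variables $t = \frac{u^2}{1+u^2}$, which maps $u \in \mathbb{R}$ onto $t \in [0,1)$ and satisfies $1-t = \frac{1}{1+u^2}$ and $u^2 = \frac{t}{1-t}$. Writing $X(t) = \sum_{k=0}^d C_k t^k$, I would define
$$\hat X(u) := (1+u^2)^d\, X\!\left(\tfrac{u^2}{1+u^2}\right) = \sum_{k=0}^d C_k\, u^{2k}(1+u^2)^{d-k}.$$
This $\hat X$ is a genuine real symmetric matrix polynomial of degree $2d$ in $u$, even in $u$, and since $(1+u^2)^d > 0$ and $X(t) \succeq 0$ on $[0,1)$ it satisfies $\hat X(u) \succeq 0$ for all $u \in \mathbb{R}$. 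At this point I would invoke the matrix Fej\'er--Riesz (spectral factorization) theorem: a symmetric matrix polynomial that is positive semidefinite on $\mathbb{R}$ and has degree $2d$ factors as $\hat X(u) = \hat N(u)^T \hat N(u)$ for a (possibly non-square, and after stacking its real and imaginary parts, real) matrix polynomial $\hat N$ of degree exactly $d$.

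Next I would split $\hat N$ into its even and odd parts in $u$, writing $\hat N(u) = \hat E(u^2) + u\,\hat O(u^2)$. Expanding $\hat N^T \hat N$ and using that $\hat X$ is even in $u$ forces the odd-in-$u$ part to vanish identically, i.e. $\hat E^T\hat O + \hat O^T\hat E \equiv 0$, which yields the clean decomposition
$$\hat X(u) = \hat E(u^2)^T \hat E(u^2) + u^2\, \hat O(u^2)^T \hat O(u^2).$$
Finally I would undo the substitution by setting $u^2 = \frac{t}{1-t}$ (so $(1+u^2)^d = (1-t)^{-d}$) and multiplying through by $(1-t)^d$, absorbing the powers of $(1-t)$ into the factors. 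In the even case $d=2n$ the parts $\hat E$, $\hat O$ have degrees $n$, $n-1$ in $u^2$, producing $B_1(t) := (1-t)^n \hat E(\tfrac{t}{1-t})$ of degree at most $n$ and $B_2(t) := (1-t)^{n-1}\hat O(\tfrac{t}{1-t})$ of degree at most $n-1$, and the identity $X = B_1^T B_1 + t(1-t) B_2^T B_2$. In the odd case $d = 2n+1$ both $\hat E$ and $\hat O$ have degree $n$ in $u^2$, and the same bookkeeping gives $B_1 := (1-t)^n\hat O(\tfrac{t}{1-t})$, $B_2 := (1-t)^n\hat E(\tfrac{t}{1-t})$, both of degree at most $n = \frac{d-1}{2}$, with $X = t\,B_1^T B_1 + (1-t) B_2^T B_2$.

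The main obstacle is the spectral factorization step: the matrix Fej\'er--Riesz theorem is the real substance of the argument, and it is also where one must be careful to track degree so that $\hat N$ has degree exactly $d$ (half the degree of $\hat X$) rather than overshooting. Everything else is elementary, although the degree bookkeeping in the back-substitution---verifying that the powers of $(1-t)$ cancel exactly so that the factors have the advertised degrees $\tfrac d2$, $\tfrac d2 - 1$, and $\tfrac{d-1}{2}$---requires care, and the passage from a complex to a real factor is handled by stacking the real and imaginary parts of $\hat N$ into a single taller real matrix (which is why $B_1,B_2$ need not be square).
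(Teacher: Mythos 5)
The paper does not actually prove this theorem: it is quoted from the literature (Dette--Studden, Papp--Alizadeh), with a pointer to a survey of related proofs, so there is no in-paper argument to compare against. Judged on its own, your proof is essentially correct and follows one of the standard routes to such interval Positivstellens\"atze: the ``if'' direction is the easy conic observation you give, and for ``only if'' you reduce nonnegativity on $[0,1]$ to nonnegativity on $\mathbb{R}$ via the rational substitution $t=\frac{u^2}{1+u^2}$, invoke matrix spectral factorization (Fej\'er--Riesz on the line), split the factor into even and odd parts in $u$, and undo the substitution. The even/odd step is sound: writing $\hat N(u)=\hat E(u^2)+u\,\hat O(u^2)$, the cross term $u\bigl(\hat E^T\hat O+\hat O^T\hat E\bigr)(u^2)$ is the odd part of $\hat N^T\hat N=\hat X$, which vanishes since $\hat X$ is even, and your back-substitution with the weights $(1-t)^n$ and $(1-t)^{n-1}$ does produce factors of exactly the advertised degrees $\tfrac d2$, $\tfrac d2-1$ (even case) and $\tfrac{d-1}2$ (odd case); I checked the bookkeeping in both parities and it closes. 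Two points you should make explicit if you write this up: first, $\hat X$ may have degree strictly less than $2d$ (its degree is even in any case, since the leading coefficient of a matrix polynomial PSD on $\mathbb{R}$ cannot have odd degree), and the factorization theorem you cite must be stated so as to deliver $\hat N$ of degree at most $d$, not exactly $d$; second, the substitution only covers $t\in[0,1)$, which is all you need since $\hat X\succeq 0$ on $\mathbb{R}$ follows from $X\succeq 0$ on $[0,1)$ alone. Neither is a gap, only a matter of stating the cited spectral factorization carefully --- which, as you say yourself, is where the real mathematical content lives.
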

%

This theorem results in a semidefinite representation of polynomial matrices that are positive semidefinite on the interval $[0, 1]$ as we describe next. \bachir{This transformation is rather standard and can be traced back to the work of Nesterov \cite{nesterov2000squared}.}
  
\begin{prop}
  Let  $d, m$ be positive integers. There exist two linear maps $\alpha_d^m$ (which maps $\mathcal S^{\frac {d+1}2m}$ to  $\SintRnn$ when $d$ is odd and $\mathcal S^{(\frac d2+1)m}$ to  $\SintRnn$ when $d$ is even) and $\beta_d^m$ (which maps $\mathcal S^{\frac {d+1}2m}$ to  $\SintRnn$ when $d$ is odd and  $\mathcal S^{\frac d2m}$ to  $\SintRnn$ when $d$ is even) such that for any $X \in \SintRnn_d$, $X(t) \succeq 0 \; \forall t \in[0, 1]$ if and only if one can find positive semidefinite matrices $Q_1, Q_2$ of appropriate sizes that satisfy the equation
  $$X = \alpha_d^m(Q_1) + \beta_d^m(Q_2).$$
  

\label{prop:positivestellnaz_sdp_finite}
\end{prop}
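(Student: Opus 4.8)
The plan is to realize each of the two sum-of-squares blocks furnished by Theorem \ref{thm:characeterization.sos.matrix} through a Gram-matrix (Nesterov-type) parametrization, and then to define $\alpha_d^m$ and $\beta_d^m$ by attaching the appropriate scalar weights. The one non-mechanical ingredient is a matrix-valued version of the familiar equivalence ``sum of squares $\Leftrightarrow$ positive semidefinite Gram matrix,'' which I would isolate as the first step. Fix a nonnegative integer $k$, let $v_k(t) \coloneqq (1, t, \ldots, t^k)^T$, and consider the linear map $\Gamma_k : \mathcal S^{(k+1)m} \to \SintRnn$ defined by $\Gamma_k(Q)(t) \coloneqq (v_k(t) \otimes I_m)^T\, Q\, (v_k(t) \otimes I_m)$. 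This sends a constant symmetric matrix to a symmetric $m \times m$ polynomial matrix of degree at most $2k$, so it is well defined and clearly linear. I claim that $\Gamma_k$ maps the PSD cone ${\mathcal S^{(k+1)m}}^+$ exactly onto the set of matrix sums of squares $\{\sum_i B_i(t)^T B_i(t) : \deg B_i \le k\}$.

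The proof of this claim is short and is the only place requiring care, all of it in the bookkeeping with the Kronecker factor $v_k \otimes I_m$. Writing an $r \times m$ polynomial matrix of degree $\le k$ as $B(t) = \sum_{j=0}^{k} t^j B_j = \tilde B\,(v_k(t) \otimes I_m)$ with $\tilde B \coloneqq [B_0 \ \cdots \ B_k]$, one gets $B(t)^T B(t) = \Gamma_k(\tilde B^T \tilde B)$; summing over a finite family and adding the corresponding PSD Gram matrices shows every matrix SOS lies in $\Gamma_k({\mathcal S^{(k+1)m}}^+)$. Conversely, factoring any $Q \succeq 0$ as $Q = L^T L$ yields $\Gamma_k(Q)(t) = B(t)^T B(t)$ with $B(t) \coloneqq L\,(v_k(t) \otimes I_m)$ a polynomial matrix of degree $\le k$, so each PSD Gram matrix produces a single SOS term. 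This establishes the claim.

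With $\Gamma_k$ in hand I would define the maps by cases, reading the degrees and weights off Theorem \ref{thm:characeterization.sos.matrix}. For $d$ odd, set $k = \tfrac{d-1}{2}$ and put $\alpha_d^m(Q_1)(t) \coloneqq t\,\Gamma_k(Q_1)(t)$ and $\beta_d^m(Q_2)(t) \coloneqq (1-t)\,\Gamma_k(Q_2)(t)$, both on domain $\mathcal S^{\frac{d+1}{2}m}$. For $d$ even, set $k_1 = \tfrac{d}{2}$ and $k_2 = \tfrac{d}{2}-1$, and put $\alpha_d^m(Q_1)(t) \coloneqq \Gamma_{k_1}(Q_1)(t)$ on $\mathcal S^{(\frac{d}{2}+1)m}$ and $\beta_d^m(Q_2)(t) \coloneqq t(1-t)\,\Gamma_{k_2}(Q_2)(t)$ on $\mathcal S^{\frac{d}{2}m}$. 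Since multiplication by a fixed scalar polynomial is linear, each is a linear map, and a one-line degree count ($2k+1 = d$ in the odd case, $2k_1 = d$ and $2k_2 + 2 = d$ in the even case) confirms that the images land in $\SintRnn_d$ and that the domain dimensions are precisely those asserted in the proposition.

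Finally I would verify the equivalence. For the ``if'' direction, if $X = \alpha_d^m(Q_1) + \beta_d^m(Q_2)$ with $Q_1, Q_2 \succeq 0$, then for any $w \in \mathbb R^m$ and any real $t$ we have $w^T \Gamma_{k}(Q_i)(t)\, w = \big((v_k(t)\otimes I_m)w\big)^T Q_i \big((v_k(t)\otimes I_m)w\big) \ge 0$, so each $\Gamma_k(Q_i)(t)$ is PSD for every $t$; since the attached weights $t$, $1-t$, and $t(1-t)$ are nonnegative on $[0,1]$, we conclude $X(t) \succeq 0$ for all $t \in [0,1]$. For the ``only if'' direction, if $X(t) \succeq 0$ on $[0,1]$, Theorem \ref{thm:characeterization.sos.matrix} supplies $B_1, B_2$ of the stated degrees; applying the surjectivity half of the claim to each SOS block $B_i(t)^T B_i(t)$ produces PSD Gram matrices $Q_1, Q_2$ of the correct sizes with $X = \alpha_d^m(Q_1) + \beta_d^m(Q_2)$. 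The main obstacle, such as it is, is confined to the matrix SOS–Gram equivalence of the first two paragraphs; the remainder is degree and dimension accounting together with a direct appeal to Theorem \ref{thm:characeterization.sos.matrix}.
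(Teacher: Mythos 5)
Your proposal is correct and follows essentially the same route as the paper: a Gram-matrix (Nesterov-type) parametrization of the two sum-of-squares blocks supplied by Theorem~\ref{thm:characeterization.sos.matrix}, with $\alpha_d^m$ and $\beta_d^m$ obtained by attaching the weights $t$, $1-t$, and $t(1-t)$ exactly as in \eqref{eqn:alpha.beta}; your map $\Gamma_k$ coincides with the paper's $\Lambda^m_{2k}$ up to a reordering of coordinates, since $v_k(t)\otimes y$ is a permutation of the monomial vector $v(t,y)$. The only cosmetic difference is that you prove the equivalence between matrix sums of squares and positive semidefinite Gram matrices directly via the Kronecker factorization, whereas the paper scalarizes to $y^TY(t)y$ and cites the known result.
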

\begin{proof}
  Fix positive integers $m$ and $d$.
  Let $Y \in \SintRnn_d$ be an $m \times m$ polynomial matrix of degree $d$.
  It is well known that $Y$ can be written as $B(t)^TB(t)$ for some polynomial matrix $B$ if and only if the polynomial $y^TY(t)y$ is a sum of squares of some polynomials in the variables $(t, y_1, \ldots, y_m)$; see e.g. \cite{kojima_sums_2003}. The latter condition is equivalent to existence of a $(\frac d2+1)m \times (\frac d2+1)m$ matrix $Q \succeq 0$ ($d$ is necessarily even) such that the following polynomial identity holds
  \begin{equation}
    \label{eq:psd.matrix.is.sos}
    y^TY(t)y = v(t, y)^T Q v(t, y),
    \end{equation}
    where 
    $$v(t, y) = (y_1, \ldots, y_m, y_1t, \ldots, y_mt, \ldots, y_1 t^{\frac d2}, \ldots, y_m t^{\frac d2} )^T$$
     is the vector of all monomials of the form $y_lt^k$  for $l=1,\ldots,m$, and $k=0,\ldots,\frac d2$; see e.g.  \cite[Section 3]{ahmadi_complete_2013}. 
    For notational convenience, we index the entries of the matrix $Q$ by the monomials in $v \coloneqq v(t, y)$. This means that when we write $Q_{v_i, v_j}$, we refer the $(i, j)\text{-th}$ entry of  $Q$.

    Note that for any symmetric matrix $Q$, there exists a unique $Y \in \SintRnn$ that satisfies the identity \eqref{eq:psd.matrix.is.sos}.
    Indeed, considering the expression $v(t, y)^T Qv(t, y)$ as a polynomial in $y_1, \ldots, y_m$ with coefficients in $\mathbb R[t]$, the coefficient of $y_iy_j$ is equal to twice the $(i,j)\text{-th}$ entry of $Y(t)$ when $i\ne j$, and equal to the $(i,i)\text{-the}$ entry of $Y(t)$ otherwise. Define $\Lambda^m_d$ to be the linear function that maps a symmetric matrix $Q \in {\mathcal S^{(\frac d2+1) m}}$ to the $m \times m$ polynomial matrix $Y$ of degree $d$ that satisfies identity \eqref{eq:psd.matrix.is.sos}, i.e.
\begin{equation}
      \label{eqn:lambda}
    Y = \Lambda^m_d(Q) \iff Y_{ij}(t) = c_{ij} \sum_{k,l \in \{0, \ldots, \frac d2\}} Q_{y_it^k, y_jt^l} t^{k+l} \quad \forall i, j \in \{ 1,\ldots,m\},
\end{equation}
    with $c_{ij} = \frac12$ when $i \ne j$ and $c_{ii} = 1$.
    
    We have just shown that an $m \times m$ polynomial matrix $Y$ of degree $d$ can be written as $Y(t) = B(t)^TB(t)$ for some polynomial matrix $B$ if and only if there exists an $m(\frac{d}2+1) \times m(\frac{d}2+1)$ positive semidefinite matrix $Q$ such that
    $$Y = \Lambda_d^m(Q).$$
    Combining this result with Theorem \ref{thm:characeterization.sos.matrix}, we get that any $m \times m$ polynomial matrix $X$ is positive semidefinite on $[0, 1]$ if and only if there exist positive semidefinite matrices $Q_1, Q_2$ such that
    $$X = \alpha_d^m(Q_1) + \beta_d^m(Q_2),$$
    where
    
    \begin{equation}
      \begin{aligned}
    \alpha_d^m(Q) = t\Lambda_{d-1}^m(Q) \text{ and } \beta_d^m(Q) = (1-t)\Lambda_{d-1}^m(Q) \text{ when $d$ is odd},\\
    \alpha_d^m(Q) = \Lambda_{d}^m(Q) \text{ and } \beta_d^m(Q) = t(1-t)\Lambda_{d-2}^m(Q) \text{ when $d$ is even}.
      \end{aligned}\label{eqn:alpha.beta}
  \end{equation}
\end{proof}

The next theorem summarizes the results of this subsection. 

\begin{thm}
For \(d \in \mathbb N\), the following SDP finds the best polynomial solution of degree \(d\) to the TV-SDP in \eqref{eqn:time_varying_sdp_l2} with data $c, A_0, A_1, \ldots, A_n, D_1,\ldots, D_n$:

\begin{equation}\label{eq:sdp_find_best_poly_d}
\begin{array}{ll@{}ll}
\underset{x, Q_1, Q_2}{\max} & \langle c, x \rangle_{\LintRn} & \\
  \text{s.t.}
                                        &x \in \mathbb R^n_d[t]\\
                                        &Q_1, Q_2 \succeq 0,\\
                                        &Fx = \alpha_{d'}^m(Q_1) + {\beta_{d'}^m}(Q_2).
\end{array}
\end{equation} 
Here, $d'$ is the degree of $Fx$, i.e. $$d' = \max\{\deg(A_0), \max_{i=1,\ldots, n} \deg(A_i)+d, \max_{i=1,\ldots, n} \deg(D_i)+d+1\}.$$
\label{thm:tvsdp_as_sdp}
\end{thm}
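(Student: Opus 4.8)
The plan is to reduce the statement to a direct application of Proposition \ref{prop:positivestellnaz_sdp_finite}, after establishing that for a polynomial decision variable $x \in \mathbb R^n_d[t]$ the matrix $Fx$ is itself a univariate polynomial matrix lying in $\SintRnn_{d'}$, with $d'$ exactly the quantity announced in the theorem. First I would check that $Fx$ is symmetric for each $t$; this is immediate since $A_0(t)$, each $A_i(t)$, and each $D_i(t,s)$ are symmetric, so that $Fx \in \SintRnn \cap \mathbb R^{m\times m}[t]$. For the degree, the affine term $A_0$ contributes degree $\deg(A_0)$, each product $x_i(t)A_i(t)$ contributes degree $\deg(A_i)+d$, and the only nontrivial contribution is the kernel term $\int_0^t x_i(s) D_i(t,s)\,\mathrm{d}s$. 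Writing each entry of $D_i(t,s)$ as a sum of monomials $t^k s^l$ with $k+l \le \deg(D_i)$ and $x_i(s)=\sum_j a_j s^j$ with $j \le d$, integration in $s$ turns $s^{l+j}$ into $t^{l+j+1}/(l+j+1)$, so the resulting polynomial in $t$ has degree at most $(k+l)+j+1 \le \deg(D_i)+d+1$. Taking the maximum over all contributions yields $d' = \max\{\deg(A_0),\, \max_i \deg(A_i)+d,\, \max_i \deg(D_i)+d+1\}$, so that $Fx \in \SintRnn_{d'}$.

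Next I would reconcile the ``almost everywhere'' semidefiniteness in the definition of $\SintRnn^+([0,1])$ with the ``for all $t$'' condition used in Proposition \ref{prop:positivestellnaz_sdp_finite}. Since $Fx$ is a polynomial, hence continuous, matrix function, its smallest eigenvalue is a continuous function of $t$ on $[0,1]$; a continuous scalar function that is nonnegative almost everywhere is nonnegative everywhere. Therefore, for any $x \in \mathbb R^n_d[t]$, the feasibility constraint $Fx \in \SintRnn^+([0,1])$ is equivalent to $Fx(t) \succeq 0$ for all $t \in [0,1]$.

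With these two observations in hand, the theorem follows by invoking Proposition \ref{prop:positivestellnaz_sdp_finite} with $X = Fx$ and degree parameter $d'$: the matrix $Fx(t)$ is positive semidefinite for all $t \in [0,1]$ if and only if there exist positive semidefinite matrices $Q_1, Q_2$ (of the sizes dictated by the parity of $d'$ through the maps $\alpha_{d'}^m, \beta_{d'}^m$) such that $Fx = \alpha_{d'}^m(Q_1) + \beta_{d'}^m(Q_2)$. Consequently the projection onto the $x$-variable of the feasible set of \eqref{eq:sdp_find_best_poly_d} coincides exactly with the set of degree-$d$ polynomial feasible solutions of the TV-SDP. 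Since the two problems share the identical linear objective $\langle c, x \rangle_{\LintRn}$, they attain the same optimal value over the same set of optimal $x$, which is precisely the assertion that \eqref{eq:sdp_find_best_poly_d} finds the best degree-$d$ polynomial solution.

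The one step requiring genuine care is the degree bookkeeping for the kernel term: a naive count might suggest the integral raises the degree by $\deg_t(D_i)$ and $\deg_s(D_i)$ separately, and the point is that after integration in $s$ the $s$-degree and the $x$-degree merge into a single $t$-power, collapsing the bound to the total degree $\deg(D_i)$ plus $d+1$. Everything else is an assembly of Proposition \ref{prop:positivestellnaz_sdp_finite} with the elementary continuity argument above, so I do not anticipate further obstacles.
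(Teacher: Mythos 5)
Your proof is correct and follows essentially the same route as the paper, which presents Theorem \ref{thm:tvsdp_as_sdp} without a separate proof as a direct combination of Proposition \ref{prop:positivestellnaz_sdp_finite} with the observation that $Fx \in \SintRnn_{d'}$ for $x \in \mathbb R^n_d[t]$. Your two added details---the degree count for the kernel term after integrating out $s$, and the continuity argument reconciling the almost-everywhere constraint with pointwise positive semidefiniteness of a polynomial matrix---are exactly the points the paper leaves implicit, and both are handled correctly.
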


From a practical standpoint, a nice feature of this SDP hierarchy is the dimensions of the matrices $Q_1,Q_2$ grow only linearly with $d$.


\begin{remark}For implementation purposes, one does not need to explicitly write out the linear maps $\alpha_{d'}^m$ and $\beta_{d'}^m$ in Theorem \ref{thm:tvsdp_as_sdp}.
  Certain solvers, such as YALMIP \cite{lofberg_yalmip_2004} or SOSTOOLS \cite{papachristodoulou_sostools:_2013}, accept the problem directly in the following form (and do the conversion to an SDP in the background):

\begin{equation*}
  \begin{array}{ll@{}ll}
    \underset{x, X_1, X_2}{\max} & \langle c, x \rangle_{\LintRn} & \\
                                 \text{s.t.}
                                 &x \in \mathbb R^n_d[t], X_1, X_2 \in \SintRnn_{d'-1},\\
                                 &Fx (t) = t X_1(t) + (1-t) X_2(t),\\
                                 & y^TX_1(t)y, y^TX_2(t)y \ \text{are sums of squares of polynomials},
  \end{array}
\end{equation*}
when $d$ is odd, and 
\begin{equation*}
  \begin{array}{ll@{}ll}
    \underset{x, X_1, X_2}{\max} & \langle c, x \rangle_{\LintRn} & \\
                                 \text{s.t.}
                                 &x \in \mathbb R^n_d[t], X_1 \in \SintRnn_{d'}, X_2 \in \SintRnn_{d'-2},\\
                                 &Fx (t) = X_1(t) + t(1-t) X_2(t),\\
                                 & y^TX_1(t)y, y^TX_2(t)y \ \text{are sums of squares of polynomials},
  \end{array}
\end{equation*}
when $d$ is even. The aforementioned linear maps however help us with the notation and presentation of the next section. 
\end{remark}

\begin{remark}
\bachir{
The results of this section combined with Theorem \ref{thm:poly.optimal} imply the following: For any strictly feasible TV-SDP, the SDPs in \eqref{eq:sdp_find_best_poly_d}, indexed by the integer $d$, produce a sequence of \emph{feasible} polynomials to the TV-SDP in \eqref{eqn:time_varying_sdp_l2}, whose objective values converge to the optimal value of \eqref{eqn:time_varying_sdp_l2}. 
 Note that we are not making any claims about the convergence
    of the sequence of polynomials returned by these SDPs. Indeed, our interest is not for these polynomials to be close (in some distance measure) to an optimal solution of \eqref{eqn:time_varying_sdp_l2}  (which might not even be continuous), but for them to be feasible and have arbitrarily good objective value.
    }
\end{remark}

\section{The Dual Approach: Obtaining Upper Bounds}
\label{sec:dual-approach}
In the previous section, we showed how one can obtain arbitrarily good lower bounds on the optimal value of a strictly feasible TV-SDP by searching for polynomial solutions of increasing degree. In practice, one often has a computational budget and cannot increase the degree of the candidate polynomials beyond a certain threshold. It is therefore very valuable to know how far the objective value of the best polynomial solution that one has found is from the optimal value of \eqref{eqn:time_varying_sdp_l2}. Addressing this need is the subject of this section. More specifically, in Section \ref{sec:weak.and.strong.duality} below we give a hierarchy of dual problems (or relaxations), indexed by a nonnegative integer $d$, that provide a sequence of improving upper bounds on the optimal value of \eqref{eqn:time_varying_sdp_l2}. We show that under the boundedness assumption in \eqref{eqn:assump.boundedness}, these upper bounds converge to the optimal value as $d \rightarrow \infty$.
While the original formulation of the dual problems in Section \ref{sec:weak.and.strong.duality} is infinite dimensional, we show in Section \ref{sec:dual.is.sdp} that each of these problems can be solved exactly as an SDP of tractable size.

\bachir{We remark that our dual problems are different to the best of our knowledge from those appearing in the literature on continuous linear programs. For instance, we can derive the Lagrangian dual of problem \eqref{eqn:time_varying_sdp_l2} using standard techniques from infinite-dimensional linear programming \cite{anderson1987linear}. This problem would read

\begin{equation}
\label{eqn:lagrange_dual_time_varying_sdp_l2}
\begin{array}{ll@{}ll}
\underset{P \in \SintRnn^+([0, 1])}{\inf} & \langle P, A_0 \rangle_{\SintRnn} & \\
  \text{subject to}&  \tilde F^*P(t) + c(t) = 0  &\; \talmostsurely,\\
\end{array}
\end{equation}
where $\tilde F^*P(t)$ is the vector of size $n$ whose $i\text{-th}$ component is given by the $(i+1)\text{-th}$ component of $F^*P(t)$, with $F^*$ as in (\ref{eqn:Fstar}) below.
This is of course another TV-SDP, for which we can search for polynomial solutions to obtain upper bounds on the optimal value of \eqref{eqn:time_varying_sdp_l2}.}
The reason we do not take this approach is that we do not want to make strict feasibility assumptions on both the primal and the dual (which our current proof strategy would require in order to ensure convergence of these bounds). Furthermore, more involved assumptions would likely be required to guarantee strong duality between the two TV-SDPs. Even in the special case of continuous linear programs, a number of assumptions are needed to obtain strong duality \cite{grinold_continuous_1969, bampou_polynomial_2012}.

\newcommand{\adjoint}[1]{{#1}^*}

The following definitions will be useful in the formulation of our dual problems.

\begin{definition}[Adjoint maps]
\label{def:adjoints}
We define the adjoint of the affine map $F$ in (\ref{eqn:F}) to be the linear map $F^*\colon \SintRnn  \to \LintR{n+1}$ that acts on $P \in \SintRnn$ as follows:
  \begin{equation}
    \label{eqn:Fstar}
    F^*P(t) =   \small\begin{pmatrix}\Tr(A_0(t) P(t))\\ \Tr( A_1(t) P(t) ) + \int_t^1 \Tr (D_1(t, s) P(s)) \;{\rm d}s \\\vdots\\ \Tr( A_n(t) P(t)) + \int_t^1 \Tr (D_n(t, s) P(s)) \;{\rm d}s\end{pmatrix}\normalsize.
  \end{equation}

    For an even positive integer $d$, the adjoint of the linear map $\Lambda^m_d$ defined in \eqref{eqn:lambda} is the linear map $\adjoint{\Lambda^m_d}\colon \SintRnn  \to \mathcal S^{(\frac d2+1) m}$ that acts on $P \in \SintRnn$ as follows:
$$ \adjoint{\Lambda^m_d} (P) \coloneqq \left(\int_0^1 t^{k+l} P_{ij}(t) {\rm d}t\right)_{y_it^k,y_jt^l},$$
where the notation $Q_{y_it^k,y_jt^l}$ stands for the $(r, s)$ entry of the matrix $Q \in S^{(\frac d2+1) m}$ with $r$ and $s$ being the position of the monomials $y_it^k$ and $y_jt^l$ in the vector
$$(y_1, \ldots, y_m, y_1t, \ldots, y_mt, \ldots, y_1 t^{\frac d2}, \ldots, y_m t^{\frac d2} )^T.$$
For $d \in \mathbb N$, the adjoints of the linear maps $\alpha_d^m$ and $\beta_d^m$ defined in \eqref{eqn:alpha.beta} are defined as follows:
$$\adjoint{\alpha_d^m}(P) \coloneqq   \adjoint{\Lambda_{d-1}^m}(t P) \text{ and } \adjoint{\beta_d^m}(P) \coloneqq \adjoint{\Lambda_{d-1}^m}((1-t) P) \; \forall P \in \SintRnn \text{ when $d$ is odd},$$

$$\adjoint{\alpha_d^m}(P) \coloneqq   \adjoint{\Lambda_{d}^m}(P) \text{ and } \adjoint{\beta_d^m}(P) \coloneqq \adjoint{\Lambda_{d-2}^m}(t(1-t) P) \; \forall P \in \SintRnn \text{ when $d$ is even}.$$
\end{definition}

The reader can check (using Fubini's double integration theorem when needed) that these adjoint maps satisfy the following identities.
\begin{prop}
  \label{prop:prop.adjoint}
  For all $x \in \LintRn$ and $P \in \SintRnn$, 
  $$\langle Fx, P \rangle_\SintRnn = \langle \begin{pmatrix}1\\x\end{pmatrix}, F^*P \rangle_{\LintR{n+1}}.$$

  For a linear map $L \in \{\Lambda_d^m, \alpha_d^m, \beta_d^m\}$, a polynomial matrix $P \in \SintRnn$, and a constant symmetric matrix $Q$ of appropriate size,
  $$\langle {L}(Q), P \rangle_\SintRnn= \Tr( Q L^*(P)).$$
\end{prop}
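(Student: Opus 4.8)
The plan is to verify each of the two identities directly from the definitions, reducing the second one to the single map $\Lambda_d^m$.

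For the first identity, I would expand the left-hand side using the definition of $F$ in \eqref{eqn:F} together with the trace inner product on $\SintRnn$, writing
$$\langle Fx, P\rangle_\SintRnn = \int_0^1 \Tr\!\left(\Big(A_0(t) + \sum_{i=1}^n x_i(t)A_i(t) + \sum_{i=1}^n \int_0^t x_i(s) D_i(t,s)\,{\rm d}s\Big)P(t)\right){\rm d}t.$$
By linearity of the trace and of the integral, this splits into the $A_0$ piece $\int_0^1 \Tr(A_0(t)P(t)){\rm d}t$, the pointwise pieces $\sum_i \int_0^1 x_i(t)\Tr(A_i(t)P(t)){\rm d}t$, and the kernel pieces $\sum_i \int_0^1 \int_0^t x_i(s)\Tr(D_i(t,s)P(t))\,{\rm d}s\,{\rm d}t$. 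The one step that is not purely formal is the treatment of the kernel pieces: I would apply Fubini's theorem on the triangular region $\{(t,s)\in[0,1]^2 : s\le t\}$ to interchange the order of integration, then relabel the dummy variables so that the inner integral runs from the outer time to $1$. This is precisely what turns each $\int_0^t(\cdots){\rm d}s$ into the term $\int_t^1 \Tr(D_i(\cdot)\,P(s))\,{\rm d}s$ appearing in the components of $F^*P$ in \eqref{eqn:Fstar}. Collecting the three parts then reproduces $\langle \begin{pmatrix}1\\x\end{pmatrix}, F^*P\rangle_{\LintR{n+1}}$, the first coordinate $1$ of $(1,x)$ accounting for the $A_0$ component of $F^*P$ and the coordinates $x_1,\dots,x_n$ accounting for the remaining $n$ components. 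The boundedness and measurability hypotheses on the data $A_i, D_i$ ensure that the integrand is integrable over the triangle, so Fubini applies.

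For the second identity I would first establish it for $L = \Lambda_d^m$ and then deduce the cases $L\in\{\alpha_d^m,\beta_d^m\}$ at no extra cost. Starting from $\langle \Lambda_d^m(Q), P\rangle_\SintRnn = \int_0^1 \Tr(\Lambda_d^m(Q)(t)P(t)){\rm d}t$, I would substitute the explicit entries of $\Lambda_d^m(Q)$ from \eqref{eqn:lambda}, expand the trace as $\sum_{i,j}[\Lambda_d^m(Q)]_{ij}(t)\,P_{ij}(t)$ using symmetry of $P$, and interchange the finite double sum over the monomial exponents $(k,l)$ with the integral. Each resulting coefficient $\int_0^1 t^{k+l}P_{ij}(t){\rm d}t$ is exactly the entry $[\adjoint{\Lambda_d^m}(P)]_{y_it^k,y_jt^l}$ from Definition \ref{def:adjoints}. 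Re-indexing the double sum by the monomials $y_it^k$ and invoking symmetry of both $Q$ and $\adjoint{\Lambda_d^m}(P)$ reassembles the expression into $\Tr(Q\,\adjoint{\Lambda_d^m}(P))$, as desired.

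The cases of $\alpha_d^m$ and $\beta_d^m$ then require no further integration. Since, by \eqref{eqn:alpha.beta}, these maps are $\Lambda$ post-multiplied by a fixed polynomial ($t$, $1-t$, or $t(1-t)$), I would pull that polynomial across the inner product: for odd $d$, $\langle \alpha_d^m(Q),P\rangle_\SintRnn = \langle t\,\Lambda_{d-1}^m(Q),P\rangle_\SintRnn = \langle \Lambda_{d-1}^m(Q), tP\rangle_\SintRnn$, and applying the $\Lambda$ identity already proven, with $tP$ in place of $P$, gives $\Tr(Q\,\adjoint{\Lambda_{d-1}^m}(tP)) = \Tr(Q\,\adjoint{\alpha_d^m}(P))$, matching Definition \ref{def:adjoints}; the map $\beta_d^m$ and the even-$d$ cases are identical with the appropriate polynomial factor. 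I expect the Fubini interchange in the first identity to be the only genuinely substantive step; the second identity is essentially careful bookkeeping with the monomial indexing of the moment matrices and the symmetry of $Q$ and $P$.
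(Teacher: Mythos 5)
Your proposal is correct and takes exactly the route the paper intends: the paper does not write out a proof of this proposition at all, stating only that ``the reader can check (using Fubini's double integration theorem when needed)'' that the identities hold. Your direct verification---Fubini on the triangle $\{s\le t\}$ for the kernel terms in the first identity, and entrywise bookkeeping with \eqref{eqn:lambda} plus pulling the fixed polynomial factors $t$, $1-t$, $t(1-t)$ across the inner product to reduce $\alpha_d^m,\beta_d^m$ to $\Lambda^m_d$ in the second---is precisely that check.
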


\subsection{Dual formulation}
\newcommand\diag[1]{\begin{pmatrix}#1_1 & 0 & 0\\0&\ddots&0\\0 & 0 & #1_n\end{pmatrix}}

To derive our dual problems, we start by observing that using Lemma \ref{lem:s.plus.self.dual}, we can rewrite the TV-SDP in \eqref{eqn:time_varying_sdp_l2} as the following problem:
\begin{equation*}
 \begin{array}{lll@{}ll}
  \underset{x \in \LintRn}{\max} & \langle c, x \rangle_\LintRn & \\
  \text{subject to}& \langle Fx, P \rangle_{\SintRnn} \ge 0 \quad \forall P \in \SintRnn^+([0, 1]) \cap \mathbb R^{m \times m}[t].
\end{array}
\end{equation*}
To get an upper bound on the optimal value of \eqref{eqn:time_varying_sdp_l2}, we relax the constraint in this problem
by asking it to hold only for all $P \in \SintRnn^+([0, 1])_d$, i.e. for all polynomial matrices of degree bounded by some threshold $d$. This gives us our dual problem at level $d$, whose optimal value we denote by $u_d$:
\begin{equation}
\label{eqn:ud.tvsdp}
 \begin{array}{lll@{}ll}
  u_d &\coloneqq \underset{x \in \LintRn}{\max} & \langle c, x \rangle_\LintRn & \\
  &\text{subject to}& \langle Fx, P \rangle_{\SintRnn} \ge 0 \quad \forall P \in \SintRnn^+([0, 1])_d.
\end{array}
\end{equation}

\label{sec:weak.and.strong.duality}
\begin{lemma}[Weak Duality]
  \label{lem:weak.duality}
Let $\opt$ denote the optimal value of the TV-SDP in \eqref{eqn:time_varying_sdp_l2} and $u_d$ be as in \eqref{eqn:ud.tvsdp}. Then, for all $d \in \mathbb N$, we have
$$\opt \le u_{d} \text{ and } u_{d+1} \le u_d.$$
\end{lemma}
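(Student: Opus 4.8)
The plan is to derive both inequalities purely from inclusions between the feasible sets of the three optimization problems involved, since all of them maximize the \emph{same} linear objective $\langle c, x \rangle_\LintRn$ over subsets of $\LintRn$; once the feasible sets are nested, the corresponding suprema are automatically ordered. The starting point is the reformulation of \eqref{eqn:time_varying_sdp_l2} furnished by Lemma \ref{lem:s.plus.self.dual}: the primal feasible set $\mathcal F$ coincides with $\{x \in \LintRn \mid \langle Fx, P \rangle_\SintRnn \ge 0 \ \forall P \in \SintRnn^+([0, 1]) \cap \mathbb R^{m \times m}[t]\}$. Writing $\mathcal F_d$ for the feasible set of \eqref{eqn:ud.tvsdp}, namely $\{x \in \LintRn \mid \langle Fx, P \rangle_\SintRnn \ge 0 \ \forall P \in \SintRnn^+([0, 1])_d\}$, the whole lemma reduces to establishing $\mathcal F \subseteq \mathcal F_d$ and $\mathcal F_{d+1} \subseteq \mathcal F_d$.

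For the bound $\opt \le u_d$, I would note that the index set of constraints defining $\mathcal F_d$ is \emph{smaller} than the one defining $\mathcal F$: by definition $\SintRnn^+([0, 1])_d = \SintRnn^+([0, 1]) \cap \mathbb R_d^{m \times m}[t] \subseteq \SintRnn^+([0, 1]) \cap \mathbb R^{m \times m}[t]$. Hence any $x$ satisfying $\langle Fx, P \rangle_\SintRnn \ge 0$ for \emph{all} polynomial matrices $P$ that are positive semidefinite on $[0, 1]$ in particular satisfies it for those of degree at most $d$, so $x \in \mathcal F \implies x \in \mathcal F_d$. Thus $\mathcal F \subseteq \mathcal F_d$, and taking the supremum of $\langle c, x \rangle_\LintRn$ over the two sets gives $\opt \le u_d$; problem \eqref{eqn:ud.tvsdp} is a genuine relaxation obtained by discarding constraints.

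For the monotonicity $u_{d+1} \le u_d$, the inclusion runs the other way at the level of the index sets: a polynomial matrix of degree at most $d$ is also of degree at most $d+1$, so $\SintRnn^+([0, 1])_d \subseteq \SintRnn^+([0, 1])_{d+1}$. Therefore imposing the constraint $\langle Fx, P \rangle_\SintRnn \ge 0$ for every $P \in \SintRnn^+([0, 1])_{d+1}$ in particular imposes it for every $P \in \SintRnn^+([0, 1])_d$, which yields $\mathcal F_{d+1} \subseteq \mathcal F_d$ and hence $u_{d+1} \le u_d$: passing from level $d$ to level $d+1$ only adds constraints and can only shrink the feasible set.

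I do not anticipate a substantive obstacle here, as the argument is a pair of set-containment observations combined with monotonicity of the supremum under set inclusion. The one point requiring care is bookkeeping the direction of each inclusion---more valid inequalities means a smaller feasible set and a smaller optimal value---and making sure the single nontrivial input, the rewriting of the primal via the self-duality of $\SintRnn^+([0, 1])$ in Lemma \ref{lem:s.plus.self.dual}, is invoked correctly so that $\mathcal F$ and the $\mathcal F_d$ are compared over a common description of their constraints.
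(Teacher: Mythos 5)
Your proposal is correct and follows essentially the same route as the paper: the bound $\opt \le u_d$ comes from Lemma~\ref{lem:s.plus.self.dual} (in fact only its easy direction is needed, since one only has to check that the original semidefinite constraint implies the relaxed inner-product constraints), and the monotonicity $u_{d+1}\le u_d$ comes from the inclusion $\SintRnn^+([0,1])_d \subseteq \SintRnn^+([0,1])_{d+1}$. No gaps.
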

\begin{proof}
  Fix $d \in \mathbb N$. Since $\SintRnn^+([0, 1])_d \subseteq \SintRnn^+([0, 1])_{d+1}$, it is clear that $u_{d+1} \le u_{d}$.
  Note that the only difference between problem \eqref{eqn:ud.tvsdp} and the TV-SDP in \eqref{eqn:time_varying_sdp_l2} is that we have replaced the constraint $Fx \in \SintRnn^+$ by $\langle Fx, P \rangle_{\SintRnn} \ge 0 \quad \forall P \in \SintRnn^+([0, 1])_d$. By Lemma \ref{lem:s.plus.self.dual}, the former constraint is stronger than the latter. Therefore, $\opt \le u_d$.
\end{proof}

To get strong duality, we will make the additional assumption in (\ref{eqn:assump.boundedness}); i.e. we assume that there exists a positive scalar $\gamma$ such that
  \begin{equation*}
    Fx \in \SintRnn^+ \implies \|x(t)\|_{\infty} < \gamma \; \forall t \in [0, 1] \text{ a.e.}.
  \end{equation*}
  We further require that this constraint already be included in $F$. In other words, $F$ is taken to be of the form
  \begin{equation}
    \label{eqn:F.form}
Fx \coloneqq \begin{pmatrix}
    \hat Fx & 0 & 0\\
    0 & \gamma I - diag(x) & 0\\
    0 & 0 & diag(x)-\gamma I \\
  \end{pmatrix},
\end{equation}
where $\hat Fx \succeq 0$ denotes the remaining constraints of the TV-SDP.

  
  

\begin{thm}[Strong Duality]
  \label{thm:strong.duality}
  Suppose that the TV-SDP in \eqref{eqn:time_varying_sdp_l2} satisfies the boundedness assumption (\ref{eqn:assump.boundedness}) as explicitly imposed by a map $F$ of the form \eqref{eqn:F.form}. Let $\opt \in \mathbb R \cup \{-\infty\}$ denote the optimal value of this TV-SDP. Then the optimal value $u_d$ of problem \eqref{eqn:ud.tvsdp} converges to $\opt$ as $d \rightarrow \infty$.
\end{thm}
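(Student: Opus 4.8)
The goal is to show $u_d \to \opt$ as $d \to \infty$. By weak duality (Lemma~\ref{lem:weak.duality}) we already know $\opt \le u_{d+1} \le u_d$ for all $d$, so the sequence $\{u_d\}$ is nonincreasing and bounded below by $\opt$. Hence it converges to some limit $u^\infty \ge \opt$, and the entire task reduces to proving the reverse inequality $u^\infty \le \opt$. Here I should be careful about the two cases: if $\opt = -\infty$ I would need to rule out the possibility that the relaxations are infeasible or unbounded in a way that keeps $u_d$ from descending, but the boundedness assumption built into $F$ via the block $\gamma I - diag(x)$ and $diag(x)-\gamma I$ forces $\|x(t)\|_\infty \le \gamma$ on any solution feasible for the relaxation, so I expect $\{u_d\}$ to be bounded and the interesting content to be the finite case.

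The plan is to extract, for each $d$, a near-optimal solution $x^d$ of the relaxation~\eqref{eqn:ud.tvsdp} with $\langle c, x^d\rangle_{\LintRn} \ge u_d - \tfrac1d$, and to pass to a weak limit. Because the constraint block in~\eqref{eqn:F.form} is enforced already at low degree (the polynomial matrices $P$ supported on the $\gamma I - diag(x)$ and $diag(x) - \gamma I$ blocks lie in $\SintRnn^+([0,1])_d$ for $d$ large enough), any feasible $x^d$ of the level-$d$ relaxation satisfies $\|x^d(t)\|_\infty \le \gamma$ a.e.\ once $d$ exceeds a fixed threshold. This is exactly the hypothesis needed to apply Theorem~\ref{thm:bounded.measure.compact} to the sequence $\{l_{x^d}/\gamma\}$: it yields a function $x^\infty \in \LintRn$ and a subsequence with $x^{d} \weakcvg x^\infty$. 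By weak convergence, $\langle c, x^{d}\rangle_{\LintRn} \to \langle c, x^\infty\rangle_{\LintRn}$, so $\langle c, x^\infty\rangle_{\LintRn} = u^\infty$.

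The crux is to show $x^\infty$ is feasible for the original TV-SDP, i.e.\ $Fx^\infty \in \SintRnn^+([0,1])$; this would give $\langle c, x^\infty\rangle_{\LintRn} \le \opt$, hence $u^\infty \le \opt$ and the result. The key point is that feasibility of the relaxation at \emph{all} levels passes to the limit: fix any test matrix $P \in \SintRnn^+([0,1]) \cap \mathbb R^{m\times m}[t]$, say of degree $d_0$. For every $d \ge d_0$ in the subsequence, $x^d$ is feasible for level $d$, so $\langle Fx^d, P\rangle_{\SintRnn} \ge 0$. Using the adjoint identity $\langle Fx^d, P\rangle_{\SintRnn} = \langle \tilde x^d, F^*P\rangle_{\LintR{n+1}}$ from Proposition~\ref{prop:prop.adjoint} (where $\tilde x = (1,x)^T$) together with weak convergence gives $\langle \tilde x^\infty, F^*P\rangle_{\LintR{n+1}} \ge 0$, i.e.\ $\langle Fx^\infty, P\rangle_{\SintRnn} \ge 0$. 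Since $P$ was an arbitrary element of $\SintRnn^+([0,1]) \cap \mathbb R^{m\times m}[t]$, Lemma~\ref{lem:s.plus.self.dual} yields $Fx^\infty \in \SintRnn^+([0,1])$, as desired. This is essentially the argument of Theorem~\ref{thm:weak_lim_satisfies_ineq}, adapted so that a fixed polynomial test matrix is eventually admissible for all sufficiently high levels of the relaxation.

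The main obstacle I anticipate is the bookkeeping around the degree threshold: one must verify that for each fixed polynomial $P$, the inequality $\langle Fx^d, P\rangle_{\SintRnn} \ge 0$ holds for all large $d$ in the \emph{subsequence} (not merely for $d \ge \deg P$ in the original sequence), and that the near-optimal $x^d$ can be chosen within the $\gamma$-bound uniformly. Both follow once I confirm that the boundedness block of~\eqref{eqn:F.form} is certified by degree-$d$ test matrices for all $d$ past a fixed value, so that the compactness hypothesis of Theorem~\ref{thm:bounded.measure.compact} applies to the tail of the sequence. I would also handle the $\opt = -\infty$ case by noting that the same weak-limit argument produces a feasible $x^\infty$ whenever $\{u_d\}$ is bounded below, contradicting $\opt = -\infty$ unless the primal is infeasible, in which case the claim is interpreted accordingly.
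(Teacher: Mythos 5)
Your proposal follows essentially the same route as the paper's proof: weak duality reduces everything to showing $\lim_{d} u_d \le \opt$; near-optimal solutions of the level-$d$ relaxations are extracted; Theorem~\ref{thm:bounded.measure.compact} produces a weak limit; and testing against an arbitrary fixed $P \in \SintRnn^+([0,1]) \cap \mathbb R^{m\times m}[t]$ (admissible once $d \ge \deg P$) together with Lemma~\ref{lem:s.plus.self.dual} shows the limit is primal feasible. Two points need repair, though. First, your intermediate claim that any $x^d$ feasible for the level-$d$ relaxation satisfies $\|x^d(t)\|_\infty \le \gamma$ a.e.\ once $d$ is large is false: feasibility for \eqref{eqn:ud.tvsdp} only yields the integral inequalities $\int_0^1 q_i(t)\,(\gamma \pm x_i^d(t))\,{\rm d}t \ge 0$ for elementwise nonnegative polynomials $q$ of degree at most $d$ (via block-diagonal test matrices supported on the $\gamma I - diag(x)$ and $diag(x)-\gamma I$ blocks), and for a \emph{fixed} $d$ these do not force a pointwise bound---that implication requires test polynomials of all degrees, as in Lemma~\ref{lem:s.plus.self.dual}. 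This does not sink the argument, because the integral inequalities are exactly the hypothesis of Theorem~\ref{thm:bounded.measure.compact} applied to $l_{x^d}/\gamma$, which is precisely how the paper proceeds; you should drop the pointwise claim and invoke the integral inequalities directly. Second, to select $x^d$ with $\langle c, x^d\rangle_{\LintRn} \ge u_d - \tfrac 1d$ you must first rule out $u_d = +\infty$; you gesture at this but the paper actually proves it for $d \ge \deg(c)$ by writing $c = (c - m_c\vec 1) + m_c\vec 1$ with $m_c \coloneqq \min_i \min_{t\in[0,1]} c_i(t)$ and applying the same integral inequalities to the elementwise nonnegative part, and some such computation is needed. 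Your choice of $\varepsilon = 1/d$ in place of the paper's fixed $\varepsilon$ followed by $\varepsilon \to 0$ is an immaterial variation.
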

\begin{proof} 
From Lemma \ref{lem:weak.duality}, the sequence $\{u_d\}$ is nonincreasing and bounded below by $\opt$. It therefore converges to a (possibly infinite) limit $u^* \ge \opt$. To conclude the proof, we show that $u^* \le \opt$. 
 
 Observe first that if there exists a nonnegative integer $d$ such that $u_d = -\infty$, then $u^* =\opt= -\infty$ and we are done. We can therefore suppose that the sequence $\{u_d\}$ never takes the value $-\infty$. We claim that when $d \ge \deg(c)$,  $u_d$ cannot take the value $+\infty$ either. To see why, fix $d \ge \deg(c)$ and let $x \in \LintRn$ be any function that satisfies
  $\langle Fx, P \rangle_{\SintRnn} \ge 0 \; \forall P \in \SintRnn^+([0, 1])_d$. For any $q \in \mathbb R_d^n[t]$ that is elementwise nonnegative on $[0, 1]$, by taking
  \[P \in \left\{  \begin{pmatrix}
      0 & 0 & 0\\
      0 & diag(q) & 0\\
      0 & 0 &0 \\
    \end{pmatrix} ,  \begin{pmatrix}
      0 & 0 & 0\\
      0 & 0 & 0\\
      0 & 0 & diag(q) \\
    \end{pmatrix}\right\},\]
we get that $|\langle q, x \rangle_\LintRn| \le \gamma \langle q, \vec 1 \rangle_\LintRn$. Let 
$$m_c \coloneqq \min_{i=1,\ldots,n} \min_{t\in [0, 1] } c_i(t)$$
and observe that the polynomial $p(t) \coloneqq c(t) - m_c \vec 1$ is elementwise nonnegative on $[0, 1]$. We therefore have
\begin{align*}
|\langle c, x \rangle_\LintRn| &\le |\langle c-m_c\vec 1, x \rangle_\LintRn| + |\langle m_c\vec 1, x \rangle_\LintRn| 
\\&= |\langle p, x \rangle_\LintRn| + |m_c| |\langle \vec 1, x \rangle_\LintRn|
\\&\le \gamma \langle p, \vec 1 \rangle_\LintRn +  |m_c| \gamma \langle \vec 1, \vec 1 \rangle_\LintRn,
\end{align*}
which proves our claim that $u_d$ is finite.

As a consequence, for any integer $d \ge \deg(c)$ and for any positive scalar $\varepsilon,$ there exists a function $x^{\varepsilon, d} \in \LintRn$ such that
  \begin{equation}
    \label{eq:wd.near.optimal}
    \langle F x^{\varepsilon, d}, P \rangle_{\SintRnn} \ge 0 \; \forall P \in \SintRnn^+([0, 1])_d \text{ and } \langle c, x^{\varepsilon, d} \rangle_\LintRn \ge u_d - \varepsilon.
  \end{equation}
  
  For a given $\varepsilon > 0$, fix such a sequence $\{x^{\varepsilon, d}\}$ indexed by $d$. We claim that $\{x^{\varepsilon, d}\}$ must have a subsequence that converges weakly to a fucntion $x^{\varepsilon}\in\LintRn$.
  %
  %
  Indeed, if we let $\mu^{\varepsilon,d} \coloneqq \frac{l_{x^{\varepsilon, d}}}{\gamma}$, then for any polynomial $p \in \mathbb R^n_d[t]$ that is elementwise nonnegative on $[0, 1]$, we have $|\mu^{\varepsilon,d}(p)| \le \sum_{i=1}^n \int_0^1 p_i(t) \; \rm{d t}.$ Our claim then follows from Theorem \ref{thm:bounded.measure.compact}.
  %
  It is clear by weak convergence that $\langle c, x^\varepsilon \rangle_{\LintRn} \ge u^*-\varepsilon$. Moreover, for any $P \in \SintRnn^+([0, 1]) \cap \mathbb R^{m \times m}[t]$, if $d \ge \deg(P)$, we have $\langle Fx^{\varepsilon,d}, P \rangle_\SintRnn \ge 0$, or equivalently $$\big\langle \begin{pmatrix} 1\\x^{\varepsilon,d}\end{pmatrix}, F^*P \big\rangle_{\LintR{n+1}} \ge 0.$$ Hence, by taking $d \rightarrow \infty$,   $\big\langle \begin{pmatrix} 1\\x^{\varepsilon}\end{pmatrix}, F^*P \big\rangle_{\LintR{n+1}} \ge 0$, showing that $\langle {Fx^\varepsilon}, P \rangle_\SintRnn \ge 0$. By Lemma \ref{lem:s.plus.self.dual}, we conclude that $Fx^\varepsilon \in \SintRnn^+([0, 1])$ and therefore $\langle c, x^\varepsilon \rangle_\LintRn \le \opt$.
  
    We have just proven that for any $\varepsilon > 0$, there exists a feasible solution $x^\varepsilon$ to the TV-SDP in \eqref{eqn:time_varying_sdp_l2} such that $$u^* - \varepsilon \le \langle c, x^\varepsilon \rangle_\LintRn \le \opt.$$ This means that $u^* \le \opt.$
  \end{proof}

\subsection{The dual problem is an SDP}
\label{sec:dual.is.sdp}
In this section, we show that the infinite-dimensional problem in \eqref{eqn:ud.tvsdp} can be converted to an SDP of tractable size.
\begin{thm}
  \label{thm:dual.is.sdp}
  Consider problem \eqref{eqn:ud.tvsdp} at level $d$ and with data $c, A_0, \ldots, A_n, D_1, \ldots, D_n$.
  Let $$\hat d \coloneqq \max\{\deg(c),  \max_{i=1,\ldots,n} d+\deg(A_i),  \max_{i=1,\ldots,n} d+1+\deg(D_i)\}.$$
The optimal value of problem \eqref{eqn:ud.tvsdp} does not change when the space $\LintRn$ is replaced with $\mathbb R^n_{\hat d}[t].$ Moreover, this optimal value is equal to the optimal value of the following SDP
\begin{equation}
\label{eqn:dual.tvsdp}
 \begin{array}{ll@{}ll}
  \underset{x \in \mathbb R^n_{\hat d}[t]}{\max} & \langle c, x \rangle_\LintRn & \\
 \text{subject to}& \adjoint{\alpha_d^m} (Fx) \succeq 0\\&  \adjoint{\beta_d^m} (Fx) \succeq  0,
\end{array}
\end{equation}
where the adjoint maps $\adjoint{\alpha_d^m}, \adjoint{\beta_d^m}$ are as in Definition  \ref{def:adjoints}.
\end{thm}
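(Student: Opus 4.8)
The plan is to prove the two assertions of the theorem in turn: first, that the decision variable in \eqref{eqn:ud.tvsdp} may be restricted to the finite-dimensional space $\mathbb R^n_{\hat d}[t]$ without changing the optimal value; and second, that on this restricted space the infinite family of constraints collapses to the two matrix inequalities of \eqref{eqn:dual.tvsdp}.

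For the first assertion, the crucial point I would isolate is that both the objective and every constraint of \eqref{eqn:ud.tvsdp} depend on $x$ only through the integrals $\int_0^1 x_i(t)\pi(t)\,dt$ against polynomials $\pi$ of degree at most $\hat d$. For the objective this is immediate, since $\deg(c)\le\hat d$. For a constraint indexed by $P\in\SintRnn^+([0,1])_d$, I would rewrite it via the adjoint identity of Proposition \ref{prop:prop.adjoint} as $\langle Fx,P\rangle_\SintRnn=\langle(1,x)^T,F^*P\rangle_{\LintR{n+1}}$, so that the $x$-dependent part equals $\sum_{i=1}^n\int_0^1 x_i(t)(F^*P)_i(t)\,dt$. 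A direct degree count on \eqref{eqn:Fstar} then shows that each $(F^*P)_i$ is a polynomial of degree at most $\max(d+\deg(A_i),\,d+1+\deg(D_i))\le\hat d$; the one term needing care is the kernel contribution $\int_t^1\Tr(D_i(t,s)P(s))\,ds$, where integrating a monomial $t^as^b$ over $s$ raises the $t$-degree by one, which is precisely the origin of the $+1$ in the $\deg(D_i)$ term of $\hat d$. Granting this, for any feasible $x$ I would take $\hat x_i$ to be the orthogonal projection of $x_i$ onto $\mathbb R_{\hat d}[t]$ (with respect to $\int_0^1(\cdot)(\cdot)\,dt$). Because $\hat x_i-x_i$ integrates to zero against every polynomial of degree at most $\hat d$, and the $A_0$ contribution cancels in the difference $F\hat x-Fx$, one obtains $\langle c,\hat x\rangle_\LintRn=\langle c,x\rangle_\LintRn$ and $\langle F\hat x,P\rangle_\SintRnn=\langle Fx,P\rangle_\SintRnn\ge0$ for every admissible $P$. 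Hence $\hat x\in\mathbb R^n_{\hat d}[t]$ is feasible with the same objective value, and since $\mathbb R^n_{\hat d}[t]\subseteq\LintRn$ the two optimal values coincide.

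For the second assertion I would work with $x\in\mathbb R^n_{\hat d}[t]$, so that $Fx$ is a polynomial matrix. By Proposition \ref{prop:positivestellnaz_sdp_finite}, the set $\SintRnn^+([0,1])_d$ is exactly $\{\alpha_d^m(Q_1)+\beta_d^m(Q_2)\mid Q_1,Q_2\succeq0\}$. Substituting this parametrization into the constraint and invoking the second identity of Proposition \ref{prop:prop.adjoint} with $L=\alpha_d^m$ and $L=\beta_d^m$, the requirement $\langle Fx,P\rangle_\SintRnn\ge0$ for all admissible $P$ becomes $\Tr(Q_1\adjoint{\alpha_d^m}(Fx))+\Tr(Q_2\adjoint{\beta_d^m}(Fx))\ge0$ for all $Q_1,Q_2\succeq0$. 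Setting $Q_2=0$ and then $Q_1=0$, and using self-duality of the positive semidefinite cone (the adjoint images are symmetric because $Fx$ is), this splits into $\adjoint{\alpha_d^m}(Fx)\succeq0$ and $\adjoint{\beta_d^m}(Fx)\succeq0$; conversely these two inequalities clearly imply the summed trace inequality. These are exactly the constraints of \eqref{eqn:dual.tvsdp}, which completes the identification.

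I expect the degree-reduction step to be the main obstacle, while the constraint reformulation is essentially formal. The reformulation follows mechanically from the Positivstellensatz of Proposition \ref{prop:positivestellnaz_sdp_finite}, the adjoint identities of Proposition \ref{prop:prop.adjoint}, and cone self-duality. The reduction, by contrast, rests on the slightly more delicate moment observation together with the correct accounting of the degree of $F^*P$ — in particular on verifying that the kernel term raises the degree by no more than $d+1+\deg(D_i)$. The remaining verifications (symmetry of the adjoint images, and that the projected $\hat x$ lies in the right space) are routine.
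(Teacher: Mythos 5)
Your proposal is correct and follows essentially the same route as the paper: you reduce to $\mathbb R^n_{\hat d}[t]$ by observing that the objective and each constraint depend on $x$ only through its moments against polynomials of degree at most $\hat d$ (with the same degree accounting for $F^*P$, including the $+1$ from the kernel term), and then collapse the constraints via Proposition \ref{prop:positivestellnaz_sdp_finite}, the adjoint identities of Proposition \ref{prop:prop.adjoint}, and self-duality of the PSD cone. The only cosmetic difference is that you produce the moment-matching polynomial as the $L^2$ orthogonal projection of $x$ onto $\mathbb R^n_{\hat d}[t]$, whereas the paper's Lemma \ref{lem:poly.matching.moments} constructs the same object by inverting the Hilbert matrix.
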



Just like our primal hierarchy, observe that the dimensions of the matrices that need to be positive semidefinite in this SDP hierarchy grow only linearly with $d$. We start with a simple \bachir{and standard} lemma that will help us prove the first claim of the theorem.

\begin{lemma}
  \label{lem:poly.matching.moments}
 For any function $f \in \LintR{1}$, there exists a sequence of polynomials $\{p_d\}$ such that for every $d \in \mathbb N$, the polynomial $p_d$ has degree $d$ and satisfies
 \begin{equation}
 \int_0^1 q(t) p_d(t) \; {\rm d}t = \int_0^1 q(t) f(t) \; {\rm d}t \quad \forall q \in \mathbb R_d[t].\label{eq:poly.match.moments}
\end{equation}

\end{lemma}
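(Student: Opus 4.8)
The plan is to recognize the moment-matching condition \eqref{eq:poly.match.moments} as the statement that $p_d$ is the orthogonal projection of $f$ onto the finite-dimensional subspace $\mathbb R_d[t]$ of the Hilbert space $L^2([0,1])$. First I would observe that $f \in \LintR{1}$ is bounded and measurable on $[0,1]$, a set of finite Lebesgue measure, so $f \in L^2([0,1])$; consequently all the inner products $\int_0^1 q(t) f(t)\,{\rm d}t$ appearing in \eqref{eq:poly.match.moments} are well-defined and finite, and $\mathbb R_d[t] \subseteq L^2([0,1])$.

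Since $\mathbb R_d[t]$ is a finite-dimensional, hence closed, subspace of $L^2([0,1])$, the orthogonal projection theorem guarantees a unique $p_d \in \mathbb R_d[t]$ minimizing $\|f - p_d\|_{L^2}$, characterized by the orthogonality relation $\int_0^1 (f(t) - p_d(t)) q(t)\,{\rm d}t = 0$ for every $q \in \mathbb R_d[t]$. This relation is exactly \eqref{eq:poly.match.moments}, so this $p_d$ is the polynomial we seek, and repeating the construction for each $d \in \mathbb N$ produces the desired sequence.

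For a more explicit and self-contained argument, I would instead write $p_d(t) = \sum_{j=0}^d a_j t^j$ and impose \eqref{eq:poly.match.moments} only on the monomial basis $q(t) = t^k$, $k = 0, \ldots, d$; by linearity this is equivalent to imposing it for all $q \in \mathbb R_d[t]$. This yields the linear system $H a = b$, where $H_{kj} = \int_0^1 t^{k+j}\,{\rm d}t = \frac{1}{k+j+1}$ is the Hilbert matrix and $b_k = \int_0^1 t^k f(t)\,{\rm d}t$. The matrix $H$ is precisely the Gram matrix of the monomials $1, t, \ldots, t^d$ with respect to the $L^2([0,1])$ inner product; since these monomials are linearly independent in $L^2([0,1])$, $H$ is positive definite and therefore invertible, so the system has a unique solution $a$, giving $p_d$.

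I do not anticipate a serious obstacle: the claim reduces to a routine application of Hilbert-space projection, equivalently to invertibility of the Hilbert/Gram matrix. The only minor point to flag is the wording ``degree $d$'': the construction produces a polynomial of degree \emph{at most} $d$, which is all that is needed both for the moment-matching identity and for its later use in the proof of Theorem \ref{thm:dual.is.sdp}; I would simply note that $\deg(p_d) \le d$.
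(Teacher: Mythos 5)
Your proposal is correct and its explicit second argument is essentially the paper's own proof: parameterize $p_d$ in the monomial basis, reduce \eqref{eq:poly.match.moments} by linearity to a linear system with the Hilbert matrix, and invoke its invertibility (the paper cites Hilbert's original reference, while you supply the cleaner self-contained justification that $H$ is the Gram matrix of the linearly independent monomials $1,t,\ldots,t^d$ in $L^2([0,1])$ and hence positive definite). Your opening orthogonal-projection framing is an equivalent repackaging of the same fact, and your remark that the construction only guarantees $\deg(p_d)\le d$ is a fair and harmless observation, since that is all that is used in the proof of Theorem \ref{thm:dual.is.sdp}.
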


\begin{proof}
  Fix $d \in \mathbb N$.
  Parameterize a generic univariate polynomial $p(t)$ of degree $d$ as
  $$p(t) = \sum_{i=0}^d p_i t^i$$
  and let $m_i \coloneqq \int_0^1 t^i f(t) \; {\rm d}t$ for $i=0, \ldots, d$.
  By linearity, the equality in \eqref{eq:poly.match.moments} is equivalent to
  $$\int_0^1 t^i p(t) \; {\rm d}t = m_i \quad i=0, \ldots, d.$$
  Let $H$ denote the $(d+1)\times(d+1)$ matrix whose $(i,j)\text{-th}$ entry is equal to $\int_0^1 t^{i+j-2}\;{\rm d}t = \frac1{i+j-1}$. Equation \eqref{eq:poly.match.moments} is therefore equivalent to
    $$\begin{pmatrix}p_0,&\ldots&,p_d\end{pmatrix}H  = \begin{pmatrix}m_0,&\ldots,&m_d\end{pmatrix}.$$
    It follows that this equation has a (unique) solution as the matrix $H$ (often named the \emph{Hilbert} matrix) is known to be invertible \cite{hilbert_beitrag_1894}. 
\end{proof}

\begin{customproof}{of Theorem \ref{thm:dual.is.sdp}}
  Fix $d\in\mathbb{N}$.
  Let $x \in \LintRn$ be a feasible solution to (\ref{eqn:ud.tvsdp}), i.e. satisfy
  \begin{equation}
  \langle Fx, P \rangle_\SintRnn \ge 0 \; \forall P \in \SintRnn^+([0, 1])_d.\label{eqn:feasible}
\end{equation}

  Notice that this expression depends on $x$ only through its $\hat d$ moments. More precisely, if a function $y \in \LintRn$ satisfies
  \begin{equation}\langle q, x \rangle_\LintRn  = \langle q, y \rangle_\LintRn \; \forall q \in \mathbb R_{\hat d}^n[t],\label{eqn:match.moments.x}\end{equation}
  then for all $P \in \SintRnn^+([0, 1])_d$,
  $$\langle Fy, P \rangle_\SintRnn = \langle \begin{pmatrix}1\\y\end{pmatrix}, F^*P \rangle_{\LintR{n+1}} = \langle \begin{pmatrix}1\\x\end{pmatrix}, F^*P \rangle_{\LintR{n+1}} = \langle Fx, P \rangle_\SintRnn \ge 0.$$
  Furthermore,  $\langle c, y\rangle_\LintRn =\langle c, x\rangle_\LintRn$.
  By Lemma \ref{lem:poly.matching.moments}, there always exists a function $y$ in $\mathbb R_{\hat d}^n[t]$ that satisfies \eqref{eqn:match.moments.x}. Therefore, we can restrict the space $\LintRn$ in problem \eqref{eqn:ud.tvsdp} to $\mathbb R^n_{\hat d}[t]$.
  Now if $x \in \mathbb R^n_{\hat d}[t]$, by Proposition \ref{prop:positivestellnaz_sdp_finite}, condition \eqref{eqn:feasible} is equivalent to
  $$\langle Fx, \alpha_d^m(Q_1) + \beta_d^m(Q_2) \rangle_\SintRnn \ge 0 \; \forall Q_1 \succeq 0, \forall Q_2 \succeq 0,$$
  which itself is equivalent to
  $$\langle Fx, \alpha_d^m(Q_1) \rangle_\SintRnn \ge 0\; \forall Q_1 \succeq 0, \text{ and } \langle Fx, \beta_d^m(Q_2) \rangle_\SintRnn \ge 0 \; \forall Q_2 \succeq 0.$$
  By Proposition \ref{prop:prop.adjoint}, this latter statement holds if and only if
  $$\langle \adjoint{\alpha_d^m}(Fx), Q_1 \rangle \ge 0\; \forall Q_1 \succeq 0, \text{ and } \langle \adjoint{\beta_d^m}(Fx), Q_2 \rangle \ge 0 \; \forall Q_2 \succeq 0,$$
 i.e.,
  $$\adjoint{\alpha_d^m}(Fx) \succeq 0 \text{ and } \adjoint{\beta_d^m}(Fx) \succeq 0.$$
\end{customproof}

\section{Applications}
\label{sec:applications}
In this section, we present three applications of time-varying semidefinite programs along with some numerical experiments.


\subsection{Time-varying Max-Flow}

\newcommand{\bderiv}{ b^{\text{deriv}}}
\newcommand{\binteg}{ b^{\text{cum}}}
In our first example, we study a generalization of the classical maximum-flow problem where the pipeline capacities are allowed to vary with time. More specifically, we are given a graph with  node set \(V \coloneqq \{1, \ldots n\}\), and edge set  \(E \subseteq [n]^2\). We take node $1$ to be the source of the flow and node \(n\) to be the target.
Our decision variables are functions $f_{ij} \in \LintR{1}$, for $(i, j) \in E$, with $f_{ij}(t)$ denoting the instantaneous flow on edge $(i ,j)$ at time $t \in [0, 1]$.
We have as input functions $b_{i,j} \in \mathbb R[t]$ with $b_{ij}(t)$ denoting the capacity of edge  $(i ,j)$ at time $t \in [0, 1]$.

The capacity (and nonnegativity) constraints that we need to satisfy are
$$0 \le f_{ij}(t) \le b_{ij}(t)  \quad \forall (i, j) \in E, \; \forall t \in [0, 1] \text{ a.e.}.$$
We further need to satisfy conservation of flow constraints at every node other than the source and the target nodes:
$$\underset{j: (i,j) \in E}{\sum} f_{ij}(t) - \underset{j: (j,i) \in E}{\sum} f_{ji}(t) = 0 \quad \forall i \in V \setminus \{1, n\}, \;\forall t \in [0, 1] \text{ a.e.}.$$

In some applications, a subset of the edges that we denote by $E_1 \subseteq E$ may not be able to handle an instantaneous change  in the flow that is too large. In other words, we need to impose the following additional constraints:
\begin{equation}
\left|\frac{{\rm d}}{{\rm d} t}f_{ij}(t)\right| \le \bderiv_{ij}(t) \quad \forall (i, j) \in E_1, \; \forall t \in [0, 1] \text{ a.e.},\label{eqn:maxflow.deriv.constraint}
\end{equation}
for some pre-specified functions $\bderiv_{ij} \in \mathbb R[t]$. We handle this by introducing a new decision variable $g_{ij} \in \LintR{1}$ for every $(i, j) \in E_1$ and imposing
$$\int_0^t g_{ij}(s) \; {\rm d}s - f_{ij}(t) = 0 \text{ and } -\bderiv_{ij}(t) \le g_{ij}(t)\le \bderiv_{ij}(t) \quad  \forall (i, j) \in E_1,\; \forall t \in [0, 1] \text{ a.e.}.$$
Moreover, we assume that because of limitations on production of the flow at the source node, the cumulative flow going into the network up to time $t$ cannot exceed $\binteg(t)$ for some given function $\binteg \in \mathbb R[t]$. Hence, this \bachir{constraint} reads
\begin{equation}
\int_0^t \underset{(1, j) \in E}{\sum} f_{1j}(t) dt \le \binteg(t) \quad \forall t \in [0, 1] \text{ a.e.}.
\label{eqn:maxflow.cum.constraint}
\end{equation}

%

Our objective is to send as much flow as possible from the source to the target node over the time interval $[0, 1]$. Hence, the overall problem, which is a time-varying semidefinite (in fact, linear) program, reads:

\newcommand{\dt}{{\rm d}t}
\newcommand{\ds}{{\rm d}s}
\begin{equation}
\label{eq:tv.maxflow}
\begin{array}{ll}
  \underset{f_{ij}, g_{ij}}{\max}
  & \int_{0}^1  \underset{(1,j) \in E}{\sum} f_{1j}(t) \; \dt \\
  & \left.\begin{array}{ll}
   0 \le f_{ij}(t) \le b_{ij}(t)  & \forall (i, j) \in E\\
   \underset{j: (i,j) \in E}{\sum} f_{ij}(t) -  \underset{j: (j,i) \in E}{\sum} f_{ji}(t) = 0 & \forall i \in V \setminus \{1, n\}\\
   \int_0^t g_{ij}(s) \; {\rm d}s - f_{ij}(t) = 0  &  \forall (i, j) \in E_1\\
            -\bderiv(t) \le g_{ij}(t)\le \bderiv(t) & \forall (i, j) \in E_1\\
  \int_0^t \underset{(1,j) \in E}{\sum} f_{1j}(s) \; \ds \le \binteg(t)\\
    \end{array}\right\}\forall t \in [0, 1] \text{ a.e.}.
\end{array}
\end{equation}

\newcommand \widthfigure{0.1\textwidth}
\newcommand \picforedge[2]{\includegraphics[trim={4cm 2cm 4cm 3cm}, clip, width=\widthfigure]{includes/flow-pngs/flow-#1-#2.png}}
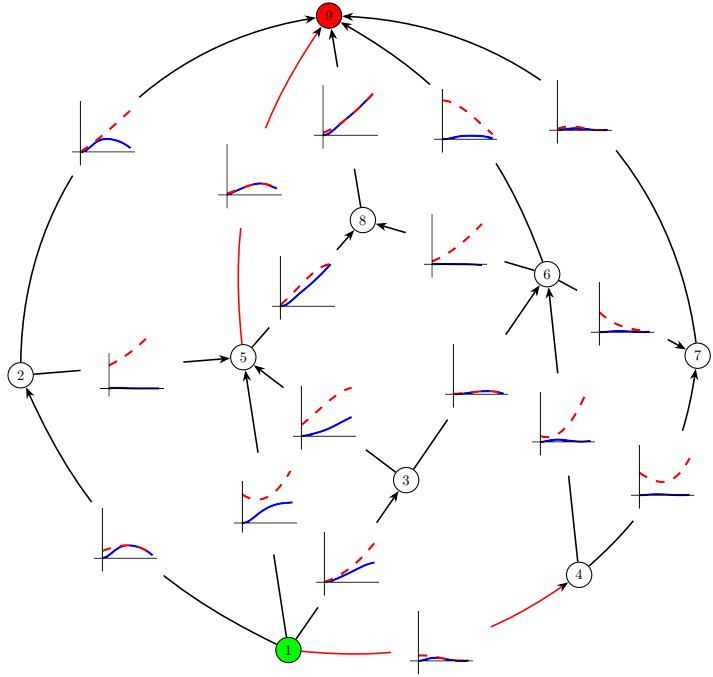
\begin{figure}[h!]
  \centering
  \scalebox{.5}{
    \hspace*{-.75in}
    \begin{tikzpicture}\begin{scope}[every node/.style={circle,thick,draw}]
        \node[fill=green] (1) at (5.12,-1.00) {1};
        \node (2) at (-2.0,6.31) {2};
        \node (3) at (8.25,3.52) {3};
        \node (4) at (12.85,1.) {4};
        \node (5) at (3.92,6.79) {5};
        \node (6) at (12, 9) {6};
        \node (7) at (16,6.83) {7};
        \node (8) at (7.10,10.44) {8};
        \node[fill=red] (9) at (6.20,15.88) {9};
      \end{scope}\begin{scope}[>={Stealth[black]},
        every node/.style={fill=white,circle},
        every edge/.style={draw=black,very thick}]
        \path [->] (1) edge[bend left=20] node {\picforedge{1}{2}} (2);
        \path [->] (1) edge node {\picforedge{1}{3}} (3);
        \path [->] (1) edge[red, bend right=20] node {\picforedge{1}{4}} (4);
        \path [->] (1) edge node {\picforedge{1}{5}} (5);
        \path [->] (2) edge node {\picforedge{2}{5}} (5);
        \path [->] (2) edge[bend left=40] node {\picforedge{2}{9}} (9);
        \path [->] (3) edge node {\picforedge{3}{5}} (5);
        \path [->] (3) edge node {\picforedge{3}{6}} (6);
        \path [->] (4) edge node {\picforedge{4}{6}} (6);
        \path [->] (4) edge[bend right=20] node {\picforedge{4}{7}} (7);
        \path [->] (5) edge node {\picforedge{5}{8}} (8);
        \path [->] (5) edge[red, bend left=20] node {\picforedge{5}{9}} (9);
        \path [->] (6) edge node {\picforedge{6}{7}} (7);
        \path [->] (6) edge node {\picforedge{6}{8}} (8);
        \path [->] (6) edge[bend right=20] node {\picforedge{6}{9}} (9);
        \path [->] (7) edge[bend right=40] node {\picforedge{7}{9}} (9);
        \path [->] (8) edge node {\picforedge{8}{9}} (9);
      \end{scope}\end{tikzpicture}
  }
  \caption{\label{fig:maxflow-graph} An instance of the time-varying max-flow problem. The edge capacities $b_{ij}(t)$, over the time interval $[0, 1]$, are plotted with red dotted lines. The optimal polynomial flow $f_{ij}(t)$ of degree at most $10$ is plotted on each edge with solid blue lines.}
\end{figure}


As a numerical example, we consider the network in Figure \ref{fig:maxflow-graph} with capacities $b_{ij}(t)$ plotted with red dotted lines on each edge $(i ,j)$. Each of these polynomials $b_{ij}$ is a nonnegative polynomial of degree $3$ that is generated as follows
\bachir{
\begin{equation}
\bachir{b_{ij}(t) = t (a_{ij}^{(1)} + a_{ij}^{(2)}\;t)^2 + (1-t)(a_{ij}^{(3)} + a_{ij}^{(4)}\;t)^2,}\label{eq:random_b_ij}
\end{equation}
where $a_{ij}^{(k)}$ are generated independently and uniformly at random from $[-1, 1]$. 
} 
We take $E_1~=~\{(1, 4), (5, 9)\}$, $\bderiv(t) = \frac12$, and $\binteg(t)=t^2$.

Using the machinery of Section \ref{sec:primal-approach}, we solve semidefinite programs (as given in Theorem \ref{thm:tvsdp_as_sdp}) that find the best polynomial solution of degree $d \in \{2, 3, \ldots, 10\}$ to the TV-SDP in \eqref{eq:tv.maxflow}. The optimal values of these problems, which provide improving lower bounds on the optimal value of problem \eqref{eq:tv.maxflow}, are reported in \bachir{the first row of Table \ref{tbl:maxflow.table}.}
We also plot the best polynomial solution of degree $10$ on each edge of the graph in Figure \ref{fig:maxflow-graph} with solid blue lines. Figure \ref{fig:maxflow.constraints} shows that this solution satisfies the constraints in \eqref{eqn:maxflow.deriv.constraint} and \eqref{eqn:maxflow.cum.constraint}.


\begin{figure}[h]
  \begin{subfigure}{.5\textwidth}
    \includegraphics[width=.9\linewidth]{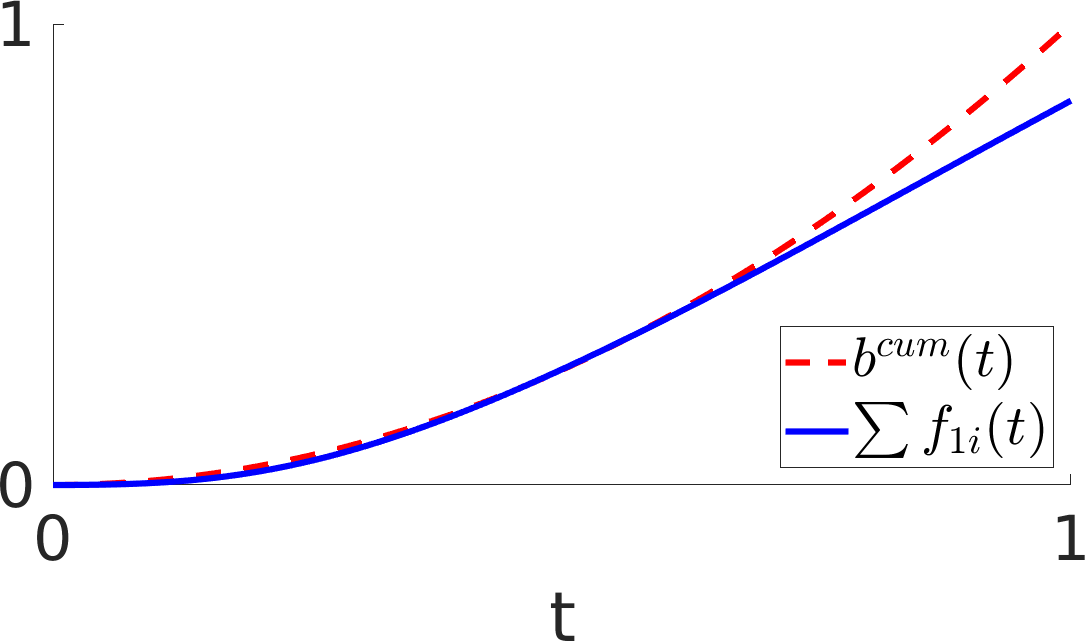}
    \caption{The cumulative flow $\sum_{(1,i) \in E} f_{1i}(t)$ at time $t$ going through the network and the maximum flow available $\binteg(t)$ up to that time.}
  \end{subfigure}\quad\quad\quad\quad
  \begin{subfigure}{.5\linewidth}
    \includegraphics[width=.9\linewidth]{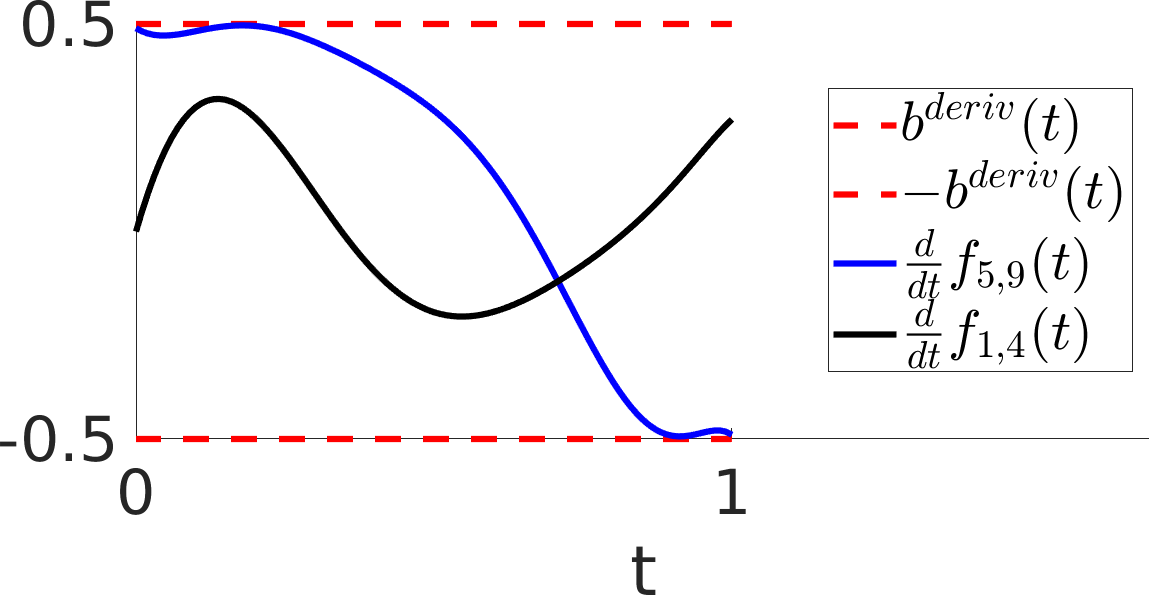}
    \caption{The derivative of the flow going through the edges in $E_1$ and the maximum rate of change $\bderiv(t)$ and $-\bderiv(t)$ allowed for the flow.}
  \end{subfigure}
  \caption{\label{fig:maxflow.constraints} Plots demonstrating that the constraints in \eqref{eqn:maxflow.deriv.constraint} and \eqref{eqn:maxflow.cum.constraint} are satisfied by the best polynomial solution of degree $10$ for \eqref{eq:tv.maxflow}.}
\end{figure}

We also use the machinery of Section \ref{sec:dual-approach} to solve the dual problems in \eqref{eqn:ud.tvsdp} in order to get upper bounds on the optimal value of the TV-SDP in \eqref{eq:tv.maxflow}. By Theorem \ref{thm:dual.is.sdp}, the dual problem at level $d$ is equivalent (after some rewriting) to the following SDP:
\newcommand{\adjalpha}{\adjoint{\alpha_d^1}}
\newcommand{\adjbeta}{\adjoint{\beta_d^1}}
\begin{equation}
  \label{eq:tv.maxflow.dual}
  \begin{array}{ll}
  \underset{f_{ij}, g_{ij} \in \mathbb R_{d+1}[t]}{\max}
  & \int_{0}^1  \underset{(1,j) \in E}{\sum} f_{1j}(t) dt \\
  & \begin{array}{llr}
      \adjalpha (b_{ij}- f_{ij}) \succeq 0, &
      \adjbeta (b_{ij}- f_{ij}) \succeq 0
      & \forall (i, j) \in E\\
      \adjalpha  (f_{ij}) \succeq 0, &
      \adjbeta  f_{ij} \succeq 0
      & \forall (i, j) \in E\\
      \adjalpha \left(\underset{j: (i,j) \in E}{\sum} f_{i j} -  \underset{j: (j,i) \in E}{\sum}f_{j i}\right) = 0, &
      \adjbeta \left(\underset{j: (i,j) \in E}{\sum} f_{i j} -  \underset{j: (j,i) \in E}{\sum} f_{j i}\right) = 0
      & \forall i \in V \setminus \{1, n\}\\
      \adjalpha \left(\int_0^t g_{ij}(s) \; {\rm d}s - f_{ij}\right) = 0, &
      \adjbeta \left(\int_0^t g_{ij}(s) \; {\rm d}s - f_{ij}\right) = 0
      & \forall (i, j) \in E_1\\
      \adjalpha \left(\bderiv-g_{ij}\right) \succeq 0, &
      \adjbeta \left(\bderiv-g_{ij}\right) \succeq 0
      & \forall (i, j) \in E_1\\
      \adjalpha \left(\bderiv+g_{ij}\right) \succeq 0, &
      \adjbeta \left(\bderiv+g_{ij}\right) \succeq 0
      & \forall (i, j) \in E_1\\
      \adjalpha \left(\binteg - \int_0^t\underset{{(1, j) \in E}}{\sum}f_{1j}(s)\right) \succeq 0, &
      \adjbeta \left(\binteg - \int_0^t \underset{{(1, j) \in E}}{\sum} f_{1j}(s)\right) \succeq 0.
    \end{array}
  \end{array}
\end{equation}
%

The optimal value of this problem for different values of $d$ is reported in \bachir{the second row of Table~\ref{tbl:maxflow.table}.}

\begin{table}[h!]
  \centering
\begin{tabular}{|r|l|l|l|l|l|l|l|l|l|l|l|l|}\hline
  $d$ & $2$ & $3$ & $4$ & $5$ & $6$ & $7$ & $8$ & $9$ & $10$ \\\hline
  lower bound & $0.7201$ & $0.7952$ & $0.8170$ & $0.8267$ & $0.8274$ & $0.8277$ & $0.8279$ & $0.8281$ & $0.8282$ \\\hline
  upper bound  & $0.8700$ & $0.8574$ & $0.8541$ & $0.8455$ & $0.8446$ & $0.8431$ & $0.8421$ & $0.8419$ & $0.8413$\\
  \hline
\end{tabular}
\caption {\label{tbl:maxflow.table}
  \bachir{Upper and lower bounds on the optimal value of the time-varying max-flow problem in \eqref{eq:tv.maxflow}.  In the first row, we report the objective value of the best polynomial solution of degree $d$. In the second row, we report the optimal value of the dual problem in \eqref{eqn:ud.tvsdp} at level $d$.}}
\end{table}

Note from the two tables that the objective value of the degree$\text{-}10$ polynomial solution we have found is guaranteed to be within \bachir{$2\%$} of the best objective value possible. The running time of our largest SDPs on a standard laptop with the solver MOSEK \cite{aps_mosek_2017} is in the order of a second. If we increase the degree much beyond $10$, our solver runs into numerical issues. This is not surprising as we are formulating our SDPs using the standard monomial basis. Much improvement is possible on the implementation front using e.g. the ideas in \cite{lofberg_coefficients_2004, papp2013semidefinite, papp_semi-infinite_2017, papp_sum--squares_2017}. Such implementation improvements are left for future work.

\subsection{A time-varying wireless coverage problem}
\label{sec:wireless.coverage}
In our second example, we present an application to wireless coverage of a targeted geographical region which moves over time. This is a time-varying generalization of problems considered in \cite{commander_optimization_2007, commander_wireless_2007, commander_jamming_2008, ahmadi_applications_2016}.
In this setting, we have $n_T$ wireless electromagnetic transmitters located at known locations $\bar T_i = (\bar x_i, \bar y_i)$ on the plane.
 Each transmitter $i \in \{1, \ldots, n_T\}$ is an omnidirectional power source providing a signal strength of \(E_i(t, x, y)\) at time \(t\) in location \((x, y)\) on the plane. 
Laws of electromagnetic wave propagation stipulate that
$$E_i(x, y, t)= \frac{c_i(t)}{(x - \bar x_i)^2 + (y - \bar y_i)^2},$$
where \(c_i(t)\), which is our decision variable, is the transmission power of the transmitter \(i\) at time \(t\).
There are $n_R$ regions on the plane that move over time and that need to be covered with sufficient signal strength. For \(j \in \{1, \ldots, n_R\}\) and $t \in [0, 1]$, we define each such region \(\mathcal B_j(t)\) with \(k_j\) polynomial inequalities:
$$\mathcal B_j(t) \coloneqq \{ (x, y) \in \mathbb R^2 |\quad  g_{t, j, k}(x, y) \ge 0, k = 1, \ldots, k_j\}.$$
Here, for $j=1,\ldots,n_R$, $k=1,\ldots,k_j$, $g_{t, j, k}(x, y)$ is a polynomial in $(x, y)$ whose coefficients depend on $t$. We further assume that for $j=1,\ldots,n_R$ and for all $t \in [0, 1]$,
$$g_{t, j, 1}(x, y) = r^2 - x^2 - y^2$$
for some large enough scalar $r$. 

Our goal is to ensure that for all time $t \in [0, 1]$, the strength of the signal in all regions $\mathcal B_j(t)$ is at least a given threshold $C$. In other words, our constraints in this problem are

\begin{equation}
\label{eq:Exyt}
\left.\begin{array}{rl}
E(x, y, t) \coloneqq \sum_{i=1}^{n_T} E_i(x, y, t) \ge C & \forall (x, y) \in \mathcal B_j(t), \forall j \in \{1, \ldots, n_R\}\\
c_i(t) \ge 0& \forall i \in \{1, \ldots, n_T\}
\end{array}\right\} \quad\forall t \in [0, 1] \text{ a.e.}.
\end{equation}
Our objective is to minimize the total cost of power generation, which is directly proportional to  $$\int_{0}^1 \sum_{i=1}^{n_T} c_i(t) \; {\rm d}t.$$

Notice that the first inequality in \eqref{eq:Exyt} is an inequality involving rational functions. Upon taking common denominators, we can reformulate this constraint as
\begin{equation}
  \label{eqn:poly.p}
  \begin{aligned}
    p_t(x, y) \coloneqq -C \prod_{i=1}^{n_T} [(x - \bar x_i)^2 + (y - \bar y_i)^2] + \sum_{i=1}^{n_T}c_i(t) \prod_{k \ne i} [(x - \bar x_k)^2 + (y - \bar y_k)^2]  \ge 0 \\ \forall (x, y)\in \mathcal B_j(t), \forall j =1,\ldots,n_R, \forall t \in [0, 1] \text{ a.e.}.
    \end{aligned}
\end{equation}
Note that $p_t(x, y)$ is a polynomial in $(x, y)$ whose coefficients depend on $t$.
Let $v_{\tilde d}$ denote the vector of monomials in $(x, y)$ of degree up to $\tilde d$, i.e.
$$v_{\tilde d} \coloneqq v_{\tilde d}(x, y) = (1, x, \ldots, x^{\tilde d}, xy, \ldots, x^{\tilde d-1}y, \ldots, y^{\tilde d})^T.$$
It is easy to check that for fixed $j \in \{1, \ldots, n_R\}, t \in [0, 1]$, existence of  positive semidefinite matrices $P^{(j)}_0(t), \ldots, P^{(j)}_{k_j}(t)$ satisfying the polynomial identity
\begin{equation}p_t(x, y) = v_{\tilde d}(x,y)^TP^{(j)}_{0}(t) v_{\tilde d}(x,y) + \sum_{k=1}^{k_j}  v_{\tilde d}(x,y)^TP^{(j)}_{k}(t) v_{\tilde d}(x,y) g_{t, j, k}(x,y)\label{eqn:certificate.ptj}\end{equation}
implies the constraint in \eqref{eqn:poly.p}.
Conversely, for every fixed $j \in \{1, \ldots, n_R\}$ and  $t \in [0, 1]$, Putinar's Positivstellensatz \cite{berr_positive_2001} implies that if the constraint in \eqref{eqn:poly.p} is satisfied strictly, one can always find a nonnegative integer $\tilde d$ and matrices $P^{(j)}_0(t), \ldots, P^{(j)}_{k_j}(t)$ that satisfy \eqref{eqn:certificate.ptj}.

For any fixed $\tilde d \in \mathbb N$, our overall problem is the following TV-SDP:
\begin{equation}
  \label{eqn:wireless}
  \begin{array}{ll}
    \underset{c_i,  P^{(j)}_{k}}{\min}
    & \int_{0}^1 \sum_{i=1}^{n_T}c_i(t) \; {\rm d}t \\
    & c_i \in \LintR{1} \quad i = 1,\ldots, n_T\\
    & P^{(j)}_{k} \in {\bf S}^{\frac{(\tilde d +1)(\tilde d +2)}2}  \quad k = 0,\ldots, k_j, \; j=1,\ldots,n_R\\\\
    &\left.\begin{array}{ll}
     c_i(t) \ge 0 & i=1,\ldots, n_T \\
    p_t(x, y) = v_{\tilde d}^TP^{(j)}_{0}(t)v_{\tilde d} + \sum_{k=1}^{k_j} g_{t, j, k}(x, y)v_{\tilde d}^TP^{(j)}_{k}(t)v_{\tilde d}  &\forall (x, y) \in \mathbb R^2, \; j=1,\ldots, n_R\\
    P^{(j)}_{k}(t) \succeq 0 & k=0,\ldots, k_j, j=1,\ldots, n_R
     \end{array}\right\}\; \forall t \in [0, 1] \text{ a.e.}.
  \end{array}
\end{equation}
Note that constraint \eqref{eqn:poly.p} that appears in the TV-SDP in \eqref{eqn:wireless} is an equality between two polynomials in $(x, y)$. Since two polynomials are equal if and only if their coefficients match, this constraint can be rewritten as a finite number of linear equations in our decision variables.

We now solve a numerical example with the following data:
$$C = 1,  n_T = 2, \bar T_1 = (0, 0), \bar T_2 = (5, 5), n_R = 2, k_1 = k_2 = 2,$$
$$r = 10, \; g_{t, 1, 2}(x, y) = 1 - \left( (x-3t+3)^2 + (y - 5t)^2\right), \; g_{t, 2, 2}(x, y) = 1 - \left(x^2 + (y - 5t+1))^2\right).$$
In other words, our two regions are \bachir{disks} of unit radius whose centers move with time.


We start by finding polynomial \bachir{solutions} $c_1, c_2 \in \mathbb R_d[t]$ that \bachir{satisfy} the nonnegativity and the signal strength requirements in \eqref{eq:Exyt}. For this, we solve the TV-SDP in \eqref{eqn:wireless} with $\tilde d = 1$. Using the methodology of Section \ref{sec:tvsdp_is_sdp}, we solve semidefinite programs (as given in Theorem \ref{thm:tvsdp_as_sdp}) to obtain the best polynomial solution of degree \(d \in \{2, 3,\ldots,10\}\). The objective values of the optimal solutions are reported in Table \ref{tbl:wireless}.

\begin{table}
  \centering
\begin{tabular}{|r|l|l|l|l|l|l|l|l|l|l|l|l|}\hline
  $d$ & $2$ & $3$ & $4$ & $5$ & $6$ & $7$ & $8$ & $9$ & $10$ \\\hline
  & $+\infty$ &$56.64$ &$54.52$ &$54.43$ &$54.14$ &$54.14$ &$53.95$ &$53.94$ &$53.93$\\\hline
\end{tabular}
\caption {\label{tbl:wireless} Objective values of optimal polynomial solutions of degree $d$ to the time-varying wireless coverage problem in \eqref{eqn:wireless}.}
\end{table}

Note that if we do not allow the solution to depend on time (or even if we allow it to depend on time as a polynomial of degree less than $3$), then the TV-SDP in \eqref{eqn:wireless} becomes infeasible. As we increase the degree, the problem becomes feasible and the objective value improves.



Figure \ref{img:wireless} demonstrates a sanity check on our solution at six snapshots of time. Indeed, the two regions $\mathcal B_1(t)$ and $\mathcal B_2(t)$ are receiving a signal of strength of at least $1$.

\newcommand\wirelesspng[1]{\includegraphics[trim={.75cm 0cm 1cm 0cm}, clip, width=.3\textwidth]{includes/wireless/wireless#1.png}}
\begin{figure}[htp]
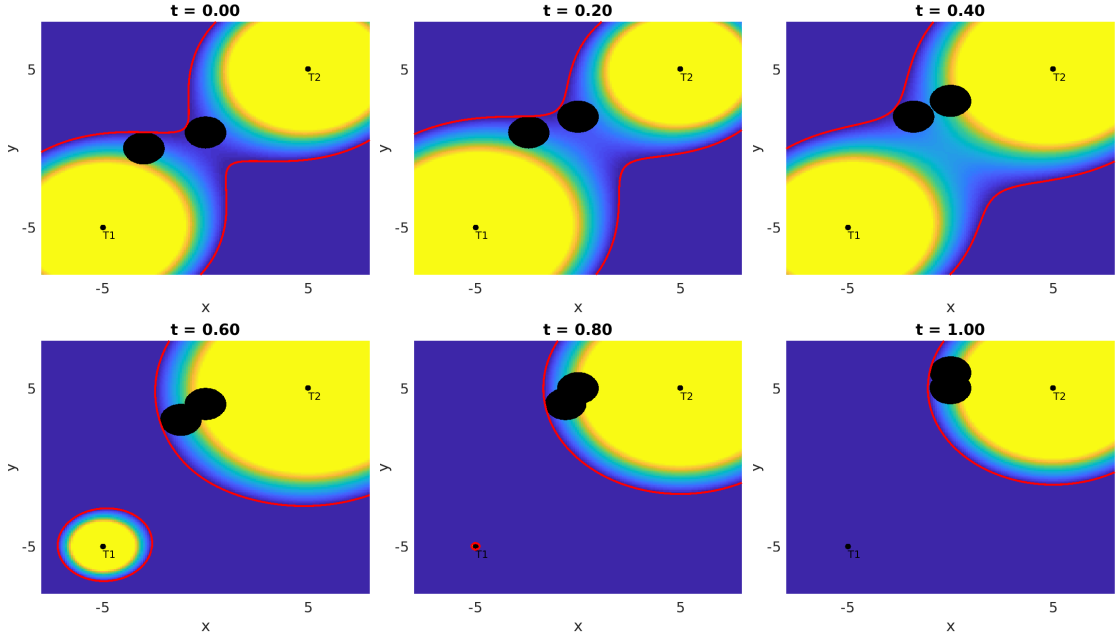

\centering
\wirelesspng{1}\wirelesspng{3}\wirelesspng{5}\\
\wirelesspng{7}\wirelesspng{9}\wirelesspng{11}
\caption{\label{img:wireless} Six time snapshots---at $t=0, \frac15, \frac25, \frac35, \frac45, 1$---of the wireless coverage obtained by the best polynomial solution of degree $10$. The two time-varying regions $\mathcal B_1(t)$ and $\mathcal B_2(t)$ that need to  receive a signal strength of at least $1$ at all times $t \in [0, 1]$ are colored in black. The heatmap in the background demonstrates the signal strength at each location with light yellow representing high and dark blue representing low signal strengths. The region delimited by the red curves is guaranteed to receive a signal strength of at least $1$.}
\end{figure}

To have an idea of how far our best polynomial solution of degree $10$ is from being optimal to the TV-SDP in \eqref{eqn:wireless}, we solve the dual problem \eqref{eqn:ud.tvsdp} presented in Section \ref{sec:dual-approach}. After some rewriting, this dual problem at level $d$ becomes the following SDP:
\begin{equation}
  \label{eqn:dual.wireless}
  \begin{array}{ll}
    &\underset{c_i,  P^{(j)}_{k} \text{of degree $d+2$}}{\min}
     \int_{0}^1 \sum_{i=1}^{n_T}c_i(t) \; {\rm d}t \\
    &\text{subject to}\\
    \small
    &\quad\begin{array}{lr}
     \adjalpha (c_i) \succeq 0, \adjbeta (c_i) \succeq 0 & i=1,\ldots, n_T \\
      \adjalpha\left(p_t(x, y) - v_{\tilde d}^TP^{(j)}_{0}(t)v_{\tilde d} + \sum_{k=1}^{k_j} g_{t, j, k}(x, y)v_{\tilde d}^TP^{(j)}_{k}(t)v_{\tilde d}\right) = 0  &\forall (x, y) \in \mathbb R^2, \; j=1,\ldots, n_R\\
      \adjbeta\left(p_t(x, y) - v_{\tilde d}^TP^{(j)}_{0}(t)v_{\tilde d} + \sum_{k=1}^{k_j} g_{t, j, k}(x, y)v_{\tilde d}^TP^{(j)}_{k}(t)v_{\tilde d}\right) = 0 &\forall (x, y) \in \mathbb R^2,\; j=1,\ldots, n_R\\
    \adjoint{\alpha_d^m}(P^{(j)}_{k})\succeq 0, \adjoint{\beta_d^m}(P^{(j)}_{k})\succeq 0 & k=1,\ldots, n_T, \; j=1,\ldots, n_R,
     \end{array}
  \end{array}
\end{equation}
\normalfont
where $m = \frac{(\tilde d +1)(\tilde d +2)}2$. Note that the second and third set of constraints are requiring a polynomial matrix in $(x, y)$ whose coefficients depend linearly on the decision variables to be identically zero. Once again, this is simply a finite numbers of equality constraints.

The optimal value of problem
  \eqref{eqn:dual.wireless} with $d = 10$ is equal to
  $52.66$. This tells us that the objective value of the
  degree-10 polynomial solution \bachir{reported in
  Table~\ref{tbl:wireless}} is within $2.5\%$ of the optimal
  value of the TV-SDP in \eqref{eqn:wireless}.


\subsection{Bi-objective SDP and Pareto curve approximation}
In our third and last example, we formulate a bi-objective (non time-varying) semidefinite program as a time-varying SDP. 
 
A bi-objective semidefinite program is a standard SDP that involves two objective functions. More precisely, we are concerned with the simultaneous maximization of two objective functions
$$\langle c_1, x \rangle \text{ and } \langle c_2, x \rangle$$
over the feasible set
$$\mathcal F \coloneqq \{x \in \mathbb R^n \; | \; Fx \coloneqq A_0 + \sum_{i=1}^n x_i A_i \succeq 0\},$$
where $A_0, \ldots, A_n$ are given by $m \times m$ symmetric matrices. 
In general there exists no single solution $x$ that maximizes both objective functions at the same time. As a trade-off, one is interested in solving the following problem 
\begin{equation}
  \label{eq:weighted.bi.objective}
y(t) \coloneqq \quad \begin{array}{ll@{}ll}
  \underset{x \in \mathbb R^n}{\max} & \langle c_1, x \rangle & \\
  \text{subject to}&  \langle c_2, x \rangle \ge  t \text{ and }
  &  Fx  \succeq 0,\\
\end{array}
\end{equation}
for various values of $t$. 
 In the case where $\mathcal F$ is compact, we can  without loss of generality take $t$ to vary in $[0, 1]$ after a possible rescaling. This gives rise to the following trade-off curve, which we refer to as the Pareto curve:
$$PC \coloneqq \{( t, y(t) )\; | \; t \in [0, 1]\}.$$
Any point on this curve tells us that in order to improve the first objective function beyond $y(t)$, the second objective needs to necessarily be smaller than $t$. We are interested in a one-shot approximation of the entire Pareto curve as oppposed to sampling points on it and solving several independent SDPs.
Such an approach has been taken before for multi-objective LPs in  \cite{gorissen_approximating_2012}, and for bi-objective polynomial optimization problems in \cite{magron_approximating_2014}.

%

To get the Pareto curve in one shot, we can solve the following TV-SDP
\begin{equation}
  \label{eq:tvsdp.bi.objective}
\begin{array}{llll}
  \underset{x \in \LintRn}{\max}
  & \int_0^1 \langle c_1, x(t) \rangle {\rm d}t  & \\
  \text{subject to}
  &
    \left.\begin{array}{l}
      \langle c_2, x(t) \rangle \ge t\\
      Fx(t)  \succeq 0\end{array}\right\} \quad
  \forall t \in [0, 1] \text{ a.e.}.
\end{array}
\end{equation}
If $x \in \LintRn$ is any feasible solution to this TV-SDP, then
$$\langle c_1, x(t) \rangle \le y(t) \; \forall t \in [0, 1] \text{ a.e.}.$$
In other words, any feasible solution to the TV-SDP in \eqref{eq:tvsdp.bi.objective} gives a lower to the Pareto curve almost every where on $[0, 1]$.
Furthermore, if $x^\opt$ is an optimal solution to the same TV-SDP 
(whose existence is guaranteed by Theorem \ref{thm:optim-value-attained} when $\mathcal F$ is compact), then
$$\langle c_1, x^\opt(t) \rangle = y(t) \; \forall t \in [0, 1] \text{ a.e.}.$$
Let $x^d \in \mathbb R^n_d[t]$ be an optimal solution to \eqref{eq:tvsdp.bi.objective} when the search space is restricted to polynomials of degree at most $d$.
We know from Theorem \ref{thm:poly.optimal} that, under the strict feasibility assumption\footnote{In this setup, this assumption is equivalent to existence of positive scalar $\varepsilon$  and a vector $x^s \in \mathbb R^n$ such that $F x^s \succeq \varepsilon I$ and $\langle c_2, x^s \rangle \ge 1+\varepsilon$.} in Definition \ref{def:strict_feasibility_sdp}, 
$$\int_0^1 y(t) - \langle c_1, x^d(t) \rangle \; {\rm d}t \rightarrow 0 \text{ as } d \rightarrow \infty.$$
Moreover, the optimal value of the dual problem of the TV-SDP in \eqref{eq:tvsdp.bi.objective} at level $d$, as described in Section \ref{sec:dual-approach}, gives an upper bound on the area under the Pareto curve. Under the assumption that the set $\mathcal F$ is bounded in the infinity norm by $\gamma$, then once the constraint
$\|x\|_\infty \le \gamma$ 
is added to the TV-SDP in \eqref{eq:tvsdp.bi.objective}, the optimal values of the associated dual problems converge to the area under the Pareto curve as $d \rightarrow \infty$ (see Theorem \ref{thm:strong.duality}).



As a concrete example of a bi-objective SDP, we consider the Markowitz portfolio selection problem \cite{markowitz_portfolio_1952}. We model $n$ tradable assets as  a nondegenerate $n\text{-variate}$ Gaussian random variable with average return $r \in \mathbb R^n$ and (positive definite) covariance matrix $\Sigma \in \mathcal S^{n}$. Given the data $r$ and $\Sigma$ as input, the goal is to choose a portfolio (i.e. an allocation of $x_i$ fraction of our total funds to asset $i \in \{1, \ldots, n\}$) that maximizes the average return $r^Tx$ while simultaneously minimizing the variance $x^T\Sigma x$.

We can formulate this problem as a bi-objective optimization problem, with variables
$$\begin{pmatrix}u,&x_1,&\ldots,&x_n\end{pmatrix}^T \in {\mathbb R^{n+1}},$$
constraints
$$x \ge 0, \sum_{i=1}^nx_i \le 1, \; x^T \Sigma x \le u,$$
and two objective functions
$$r^Tx \text{ and } -u.$$

The Pareto curve is therefore given by $\{ (t, y(t)) \; | \; t \in [0, 1]\}$, where 
\begin{equation}
  \label{eq:markowitz.pc}
 \begin{array}{llll}
  y(t) \coloneqq &\underset{x \in \mathbb R^n, u \in \mathbb R}{\max} & r^Tx & \\
  &\text{subject to}&x \ge 0\\
            &&\sum_{i=1}^nx_i \le 1\\
            &&u \le t\\
            &&\begin{pmatrix}u& x^T\\x&\Sigma^{-1}\end{pmatrix} \succeq 0.
\end{array}
\end{equation}

The TV-SDP in \eqref{eq:tvsdp.bi.objective} that gives this Pareto curve in one shot can therefore be written as
\begin{equation}
  \label{eqn:markowitz}
\begin{array}{llll}
  \underset{x \in \LintRn, u \in \LintR{1}}{\max}
  & \int_0^1 r^Tx(t) {\rm d}t  & \\
  \text{subject to}\\
  &\left.\begin{array}{l}
          x(t) \ge 0\\
          \sum_{i=1}^nx_i(t) \le 1\\
           u(t) \le t\\
           \begin{pmatrix}u(t)& x(t)^T\\x(t)&\Sigma^{-1}\end{pmatrix} \succeq 0\\
        \end{array}\quad \right\} \quad
  \forall t \in [0, 1] \text{ a.e.}.
\end{array}
\end{equation}

We numerically solve an example with $n=5$ assets,
\[
   r = \small\begin{pmatrix}0.4170,&0.7203,&0.0001,&0.3023,&0.1468\end{pmatrix}^T\normalfont,
  \Sigma =   \small\begin{pmatrix}
    6.0127&-0.7381&-0.5441&-4.9189& 1.7855\\
    -0.7381& 9.8904&-0.7946& 0.2481&-5.5214\\
    -0.5441&-0.7946& 5.1961&-3.6240& 1.5820\\
    -4.9189& 0.2481&-3.6240&10.4637& 1.7840\\
    1.7855&-5.5214& 1.5820& 1.7840&15.8475\\
  \end{pmatrix}.
  \normalfont
\]

The entries of the vector $r$ were generated independently from the uniform distribution over $[0, 1]$. The matrix $\Sigma$ was obtained by first generating a $5 \times 5$ matrix $V$ whose entries were sampled independently from the uniform distribution over $[0, 3]$, and then letting $\Sigma = VV^T$.

Using Theorem \ref{thm:tvsdp_as_sdp}, we solve a semidefinite program that finds the best
the best polynomial solution   of degree than $10$ to the TV-SDP in \eqref{eq:tvsdp.bi.objective}. 
 The objective value that we achieve is $0.3210$, and the resulting optimal solution $x^{\text{poly}, 10} \in \mathbb R^5_{10}[t]$ is plotted in Figure \ref{fig:markowitz.sol}.
 In Figure \ref{fig:pareto.sol}, we plot $r^Tx^{\text{poly}, 10}(t)$, which is a point-wise lower approximation to the true Pareto curve. We also find eleven equally-spaced points on the exact Pareto curve, by solving the problem in \eqref{eq:markowitz.pc} at $t \in \{0, 0.1, \ldots, 1\}$. Notice that our approximation to the Pareto curve obtained from the best polynomial solution of degree $10$ is almost perfect at these eleven sample points. 

 To get a formal upper bound on the area enclosed between $ \{(t, r^Tx^{\text{poly}, 10}(t)) \; | \; t \in [0, 1]\}$ and the true Pareto curve $\{(t, y(t)) \; | \; t \in [0, 1]\}$, we solve the dual problem \eqref{eqn:ud.tvsdp} presented in Section \ref{sec:dual-approach}. After some rewriting, this dual problem at level $d$ is equivalent to the following SDP (cf. Theorem \ref{thm:dual.is.sdp}):

\begin{equation}
  \label{eqn:markowitz.dual}
\begin{array}{llll}
  \underset{x \in \mathbb R_d^n[t], u \in \mathbb R_d[t]}{\max}
  & \int_0^1 r^Tx(t) {\rm d}t  & \\
  \text{subject to}
  &\begin{array}{ll}
          \adjoint{\alpha_d^1} (x_i) \succeq 0,& \adjoint{\beta_d^1} (x_i) \succeq 0 \quad i=1,\ldots,n\\
           \adjoint{\alpha_d^1}(1-\sum_{i=1}^nx_i) \succeq 0,&\adjoint{\beta_d^1}  (1-\sum_{i=1}^nx_i) \succeq 0\\           
          \adjoint{\alpha_d^1}(t-u(t)) \succeq 0,& \adjoint{\beta_d^1} (t-u(t)) \succeq 0\\
           \adjoint{\alpha_d^{n+1}} \begin{pmatrix}u(t)& x(t)^T\\x(t)&\Sigma^{-1}\end{pmatrix} \succeq 0, &\adjoint{\beta_d^{n+1}}  \begin{pmatrix}u(t)& x(t)^T\\x(t)&\Sigma^{-1}\end{pmatrix} \succeq 0.
        \end{array}
\end{array}
\end{equation}

The optimal value of problem \eqref{eqn:markowitz.dual} with $d = 10$ is equal to $0.3232$, which tells us that
$$\int_0^1 \left(y(t)  - r^Tx^{\text{poly}, 10}(t)\right)  \; {\rm d}t \le \frac{1}{100} \int_0^1 y(t) \; {\rm d}t.$$

\begin{figure}[H]
    \centering
    \begin{subfigure}[b]{0.4\textwidth}
        \includegraphics[width=\textwidth]{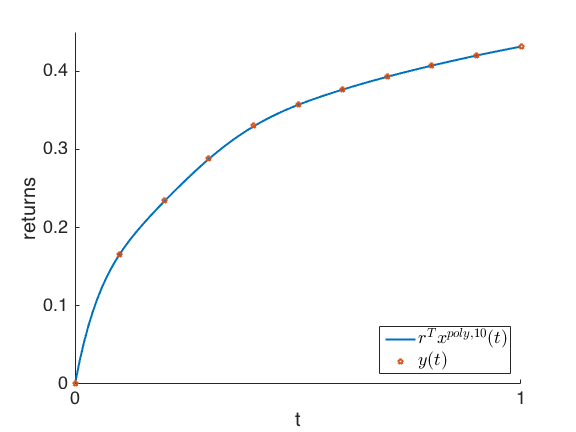}
        \caption{\label{fig:pareto.sol}The return $r^Tx^{\text{poly},10}(t)$ obtained by best polynomial solution of degree $\le 10$.\\}
    \end{subfigure}
    ~
    \begin{subfigure}[b]{0.4\textwidth}
        \includegraphics[width=\textwidth]{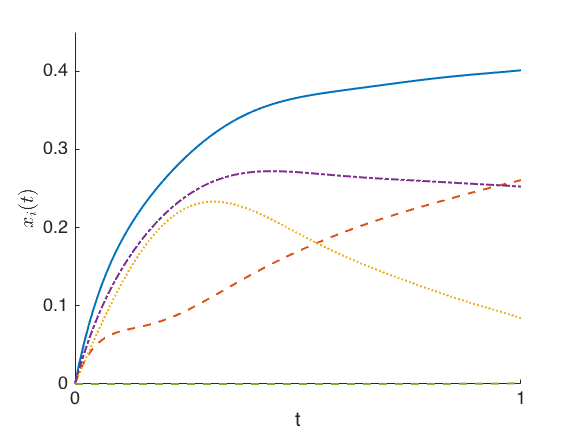}
        \caption{\label{fig:markowitz.sol} The allocations $x_i^{\text{poly},10}(t)$ for the different assets obtained by the best polynomial solution of degree $\le 10$.}
    \end{subfigure}
    \caption{The optimal polynomial solution of degree less than 10 and its associated approximation to the Pareto curve for the Markowitz portfolio selection problem.}\label{fig:pareto}
\end{figure}



\section{Future Research Directions}
\label{sec:conclusion}
We end by mentioning a few questions that are left for future research.
\bachir{We believe there is much research to be done to extend some of the fundamental structural results from the continuous linear programming literature (e.g., results related to duality theory or the structure of optimal solutions) to the case of TV-SDPs. As a concrete example, we would be interested in knowing to what extent the duality theory of Pullan \cite{pullan1996duality} can carry over to the TV-SDP setting.} 

\bachir{Closer to the focus of this paper,} we have shown in Theorem \ref{thm:poly.optimal} that under the strict feasibility assumption in Definition \ref{def:strict_feasibility_sdp}, the sequence of objective values of the best polynomial solution of degree $d$ converges to the optimal value of the TV-SDP as $d \rightarrow \infty$. If we are interested in a feasible solution with (additive or multiplicative) error bounded by $\alpha$, how large should we take $d$ to be as a function of $\alpha$ and other problem parameters? The answer to this question would likely have a dependence on the scalar $\varepsilon$ in Definition \ref{def:strict_feasibility_sdp}. Is there an efficient method for obtaining a lower bound on $\varepsilon$, or even checking the strict feasibility assumption?
Lastly, we are interested in knowing whether the strict feasibility assumption in Theorem \ref{thm:poly.optimal} can be weakened, for instance, to existence of a feasible polynomial solution.

Similarly in Theorem \ref{thm:strong.duality}, we have shown that under a boundedness assumption, the sequence of optimal values of our dual problem at level $d$ converges from above to the optimal value of the TV-SDP. It would be interesting to study the convergence rate of this sequence. We also would like to know if the boundedness assumption is needed for convergence, and whether the bound constraints need to be explicitly added to the TV-SDP as we do now.

Finally, at a more basic level, what is the complexity (in the Turing model of computation) of testing feasibility of a continuous linear program with polynomially-varying data? Here, the maximum degree of the polynomials in the data can either be fixed or part of the input. The reason we do not ask this complexity question for TV-SDPs is that the question is well known to be open even for standard SDPs (see e.g. \cite{klerk_aspects_2002}).

\section*{Acknowledgement}
The authors are grateful to Farid Alizadeh, Daniel
Bienstock, Robert J. Vanderbei, and Ramon van Handel for
insightful questions and comments.  The authors
  are also thankful to two anonymous referees for their
  careful reading of the paper and many valuable suggestions
  and comments.

\bibliographystyle{custom_plain}
\bibliography{tvsdp}

\newpage
\end{document}